\documentclass[11pt]{article}
\usepackage[a4paper,margin=26mm]{geometry}
\usepackage[T1]{fontenc}
\usepackage{lmodern}
\usepackage{microtype}
\usepackage{xspace}
\usepackage{amsmath,amssymb,amsthm,mathtools,bm}
\usepackage{booktabs,tabularx,array,multirow}
\usepackage{graphicx}
\usepackage{float}
\usepackage{xcolor}
\usepackage{enumitem}
\usepackage{algorithm}
\usepackage{algpseudocode}
\usepackage{placeins}
\usepackage{tikz}
\usetikzlibrary{arrows.meta,positioning}
\usepackage{pgfplots}
\pgfplotsset{compat=1.18}
\usepackage[numbers,sort&compress]{natbib}
\usepackage[colorlinks=true,linkcolor=blue!45!black,citecolor=blue!45!black,urlcolor=blue!45!black,
  pdftitle={TOGEARI: Interaction-Space Preconditioning for Condensed Finite-Element Systems with IPC Contact},
  pdfauthor={Yanlin Liu, Chao Huang, Kaixiang Yao, Yao Shen},
  pdfsubject={Interaction-space preconditioning for condensed finite-element contact systems},
  pdfkeywords={finite elements, IPC contact, preconditioning, FGMRES, nearly incompressible elasticity, low-rank update}]{hyperref}
\usepackage[nameinlink,capitalise]{cleveref}

\newcommand{\R}{\mathbb{R}}

\newcommand{\rank}{\operatorname{rank}}
\newcommand{\spec}{\operatorname{spec}}
\newcommand{\diag}{\operatorname{diag}}
\newcommand{\TOGEARI}{\textsc{Togeari}\xspace}
\newcommand{\TOGEARIM}{\textsc{Togeari-M}\xspace}
\newcommand{\TOGEARIR}{\textsc{Togeari-R}\xspace}
\newcommand{\norm}[1]{\left\lVert #1\right\rVert}

\newtheorem{theorem}{Theorem}
\newtheorem{proposition}[theorem]{Proposition}
\newtheorem{lemma}[theorem]{Lemma}
\newtheorem{corollary}[theorem]{Corollary}
\theoremstyle{definition}

\theoremstyle{remark}
\newtheorem{remark}[theorem]{Remark}

\AddToHook{env/theorem/begin}{\crefalias{theorem}{theorem}}
\AddToHook{env/proposition/begin}{\crefalias{theorem}{proposition}}
\AddToHook{env/lemma/begin}{\crefalias{theorem}{lemma}}
\AddToHook{env/corollary/begin}{\crefalias{theorem}{corollary}}
\AddToHook{env/definition/begin}{\crefalias{theorem}{definition}}
\AddToHook{env/remark/begin}{\crefalias{theorem}{remark}}

\AddToHook{cmd/appendix/after}{%
  \crefalias{section}{appendix}%
  \crefalias{subsection}{subappendix}%
  \crefalias{subsubsection}{subsubappendix}}

\title{\textbf{TOGEARI: Interaction-Space Preconditioning\\for Condensed Finite-Element Systems with IPC Contact}}
\author{Yanlin Liu$^{1,\dagger}$ \and Chao Huang$^{2,\dagger}$ \and
Kaixiang Yao$^{3,\dagger}$ \and Yao Shen$^{2,*}$}
\date{\small $^1$The University of Melbourne, Melbourne, Australia\\
$^2$Shanghai Jiao Tong University, Shanghai, China\\
$^3$Southern University of Science and Technology, Shenzhen, China\\
$^\dagger$Yanlin Liu, Chao Huang, and Kaixiang Yao share first authorship.\\
$^*$Corresponding author: \texttt{yaoshen@sjtu.edu.cn}}

\begin{document}
\maketitle

\begin{abstract}
TOGEARI constructs a compressed contact correction for condensed finite-element
equations with incremental potential contact (IPC).
A core-conditioned interaction matrix ranks contact combinations by their
mechanical responses; a raw-factor proxy acquires only the retained responses.
For a symmetric positive-definite core, the reference analysis gives the exact spectrum,
scaled inverse error and optimal worst omitted interaction.
The selected responses are maintained through signed contact changes using
supported operator images and a small coarse matrix.
A complementary Schur spectrum and perturbation bounds connect this
construction to the full current linear equation, damped local Newton steps
and compatible episode reuse.
On a 325,260-coordinate finger/cup problem, eight selected directions
reduce Krylov work from eleven columns to five, matching full contact.
Two reversed-order load studies give matching anchored displacements and
accepted normal reactions, with 28.8--29.3 percent less warm time and
5.1--5.3 percent less complete time than per-step direct refactorization.
A completed twelve-episode run saves 18.8 percent of total time
against direct and 2.8 percent against a held factor.
Full response caching has comparable time with 7.5 times the response storage.
A compressed repeat terminates at an energy line search; separate diagnostics
identify constitutive cancellation and terminal-backtracking sensitivity.
Larger-contact, held-out-load and frictional comparisons show where
compression or preparation loses its advantage.
The complete Newton equation remains the residual operator throughout;
the convergence guarantees retain their stated reference and local hypotheses.
\end{abstract}

\noindent\textbf{Keywords:} finite-element contact; incremental potential
contact; nearly incompressible elasticity; low-rank preconditioning; FGMRES;
Woodbury correction

\section{Introduction}
\label{sec:intro}

Localized contact stiffness induces mechanical responses throughout a
finite-element body.
A useful contact correction must therefore identify combinations of
interactions through the responses they induce in the mechanical system.
TOGEARI constructs such a compressed correction and maintains it as
contact changes. The question is how much useful response can be retained,
with its acquisition, application and update costs counted together.

Let $n$ be the number of free displacement coordinates and $m$ the number
of contact-factor rows. After local pressure elimination and boundary
reduction, write the complete Newton matrix as
\begin{equation}
 A=M+U^TU+N.
\label{eq:main-class}
\end{equation}
Here $M\in\R^{n\times n}$ is the reusable mechanical core,
$U\in\R^{m\times n}$ is a row factor of the admitted
positive-semidefinite contact tangent, and $N$ contains the remaining
pressure-geometric, follower-load and reconstruction terms.
The correction approximates contact within this decomposition.
For a right-hand side (RHS) $b$, Krylov iteration computes a correction $x$
for the complete equation $Ax=b$ and checks its residual $b-Ax$.

Truncated Operator-Gram Eigenspace Approximation of Relevant Interactions
(\TOGEARI) chooses combinations of the rows of $U$.
A contact load is first mapped through the core inverse and then returned
to contact coordinates. Its interaction matrix is
\begin{equation}
 G_M=UM^{-1}U^T.
\label{eq:metric-gram-intro}
\end{equation}
For a symmetric positive-definite (SPD) core, the leading eigenspace
minimizes the largest omitted core-normalized contact energy.
This gives the mechanically conditioned selector, \TOGEARIM.
The inexpensive realization, \TOGEARIR, selects from
\begin{equation}
 G_0=UU^T
\label{eq:raw-gram-intro}
\end{equation}
before acquiring only the retained mechanical responses.
The metric construction explains relevance; their matched comparison
determines when the raw proxy retains the same useful content.

The construction has three connected parts.
The locally condensed finite-element/contact split identifies the mechanical core and
contact factor in the implemented displacement equation.
Reference spectral analysis then characterizes the omitted interactions
and the inverse-action error of a chosen space.
Finally, a supported balanced action maintains the selected responses
through signed contact changes around a fixed material or complete-matrix
reference. Its complementary spectrum and perturbation analysis connect
the reference to the complete equation, a damped Newton step and
uniform reuse over compatible episode families.
A response is the displacement produced by a selected load; its current
image is the current reference operator applied to that response.
Updating those images avoids reacquiring the full response vectors.

\begin{figure}[htbp]
\centering
\begin{tikzpicture}[
  box/.style={draw=blue!45!black,rounded corners=2pt,
    fill=blue!3,text width=3.8cm,minimum height=1.45cm,
    align=center,font=\small,inner sep=5pt},
  flow/.style={-{Latex[length=2mm]},thick,draw=blue!55!black}]
\node[box] (mechanics) at (0,0)
  {Mechanical split\\Core and contact factor\\Full Newton operator};
\node[box] (select) at (4.7,0)
  {Select interactions\\Core-response metric\\or raw-factor proxy};
\node[box] (responses) at (9.4,0)
  {Acquire responses\\Material or complete\\reference factor};
\node[box] (update) at (9.4,-2.5)
  {Update current images\\Signed contact changes\\Small coarse matrix};
\node[box] (solve) at (4.7,-2.5)
  {Solve the full equation\\Balanced inverse action\\Full residual and trial};
\node[box] (reset) at (0,-2.5)
  {Compatible reset\\Restore physical history\\Retain numerical data};
\draw[flow] (mechanics) -- (select);
\draw[flow] (select) -- (responses);
\draw[flow] (responses) -- (update);
\draw[flow] (update) -- (solve);
\draw[flow] (solve) -- (reset);
\draw[flow] (solve.south) -- (4.7,-4.0)
  -- node[below,font=\scriptsize]{next equation} (9.4,-4.0) -- (update.south);
\draw[flow] (reset.south) -- (0,-4.65) -- (11.7,-4.65)
  -- (11.7,-2.5) -- (update.east);
\end{tikzpicture}
\caption{The complete interaction-space construction. Mechanical and contact
data determine the selected space; retained responses are acquired for the
chosen reference. Signed contact changes update their supported images and
coarse matrix. Right-preconditioned flexible generalized minimal residual
(FGMRES) iteration \citep{saad1993fgmres} uses the full current equation. A compatible episode
reset restores physical history while retaining eligible numerical
preparation. The equations and algorithm below specify each step.}
\label{fig:maintained-lifecycle}
\end{figure}
\FloatBarrier

The mathematical ingredients include established rank-structured spectral
analysis, Woodbury inversion, balanced two-level preconditioning and
inexact-Newton theory
\citep{helmberg2024preconditioned,bock2024bregman,tang2009twolevel,
dembo1982inexact,eisenstat1996forcing}.
The contribution is their mechanically defined interaction-space
construction and its linked truncation, maintenance and complete-equation
consequences. \Cref{sec:related} identifies the closest constructions and
their roles. The guarantees use the stated SPD, perturbation and local
regularity hypotheses;
the numerical comparisons retain their independently checked complete
residuals and their actual factorization scope.

Three comparisons establish the computational meaning.
Against the same core, selected contact information reduces the primary
equation's work from eleven Krylov columns to five; dimension-matched
row-norm and random choices require twelve.
Eight raw or metric directions match the five columns of full contact.
Maintaining their current images then preserves useful work across changing
equations, with substantially cheaper updates than reacquiring the space.
In two reversed-order quasi-static studies, complete anchoring saves
28.8--29.3\% of warm execution time against per-step direct refactorization
and 5.1--5.3\% of complete run time, with tightly matching anchored
displacements and accepted normal reactions.
Held-factor and full-cache comparisons identify the smaller incremental
benefit of the correction and its response-storage advantage.

The larger reset program retains one factor and eight responses.
Its completed compressed run saves 18.8\% of total direct time and
2.8\% against a held factor; full caching has comparable time with
7.5 times the response storage.
An exceptional compressed repeat limits nonlinear robustness.
Larger-contact and held-out-load cases also show that a small space,
a better worst-mode quantity or a better first direction can fail to
improve the complete calculation.
These observations delimit the construction without changing its equations.

The paper develops the mechanical split, selection and maintained action
in \cref{sec:algebra,sec:method,sec:selector-application}, and proves the
reference and current-equation results in
\cref{sec:theory,sec:convergence-reuse}.
The numerical argument then follows useful interaction content,
maintenance and complete-work economics, with the adverse regimes and
supporting controls attached to the conclusions they qualify.

\section{Related work and point of comparison}
\label{sec:related}

IPC combines barrier contact and continuous collision detection
\citep{li2020ipc}. GIPC's local barrier eigensystems, contact-aware multilevel
Schwarz and StiffGIPC address the resulting stiff systems
\citep{huang2024gipc,wu2022gpu,huang2025stiffgipc}.
Barrier-augmented Lagrangians change the nonlinear formulation
\citep{guo2024bal}; AGIPC changes the active algebraic system
\citep{wang2026agipc}.
These are broader solver constructions. Our comparisons isolate a contact
correction on the same Newton equation and then measure its effect in
complete nonlinear histories.

Subspace-preconditioned projective dynamics also addresses cloth contact
\citep{li2023subspace}.
Contact-compliance and subspace methods provide the closest setting.
Preconditioner-based compliance, low-frequency factorization and GPU
Schur evaluation already support large contact/friction calculations
\citep{zeng2022contact}. Supported triangular responses and compliance can
also be reused between numerical refactorizations
\citep[Chapter~4]{zeng2023thesis}.
MAS-PNCG selects locally important Sparse-Input Woodbury updates
\citep{zhang2026maspncg}; contact-space and Delassus constructions appear
in differentiable contact solvers \citep{zeng2026gradients}.
Contact-interface AMG corrections, adaptive spectral coarse spaces and
material-aware reduced bases offer related approaches
\citep{petrides2025amgf,efendiev2012coarse,agullo2019spectral,trusty2025bases}.
TOGEARI asks which prescribed-rank combinations of an available contact
factor should be retained relative to the mechanical core.

The generic spectral and inverse tools are established
\citep{stewart1990matrix,saad2003iterative}.
Helmberg's rank-structured SPD theory applies with
$D_{\mathrm{Helmberg}}=M$ and $V_{\mathrm{Helmberg}}=U^T$
\citep{helmberg2024preconditioned}.
Bock and Andersen study unscaled and core-scaled low-rank approximations,
including spectral bounds, optimality and randomized constructions
\citep{bock2024bregman}; their later truncation treats signed factorization
errors \citep{bock2025truncation}.
The present reference results specialize these foundations to the
condensed contact factor, expose its mechanical omitted-energy meaning,
and connect the selected space to the maintained complete-equation action.
The raw proxy's usefulness is determined by matched observations.

Maintenance also has direct antecedents. Changing contact directions can
act through a persistent geometric mapping and compliance
\citep{zeng2023implicit}. Balanced two-level preconditioning updates an
operator on a retained coarse space \citep{tang2009twolevel}.
Delayed factorization and Krylov recycling have been combined for
contact/friction continuation \citep{kuether2024recycling}.
Prepared Schur eigenspaces can be restricted to changing inactive sets
\citep{kadeethum2026online}.
Our realization maintains compressed responses through supported signed
contact changes inside the full FEM equation. Held-factor and matched
full-cache comparisons locate the contribution of compression.
The experiments do not rank it against a general recycling Krylov method.

The material decomposition uses classical local pressure condensation
\citep{simo1985volume}. Parameter-robust elasticity preconditioners and
quadratic or matrix-free multigrid provide alternative core solvers
\citep{klawonn1998penalty,wieners2000multigrid,yang2015quadratic,
davydov2020matrixfree,brown2022performance,schussnig2025matrixfree}.
The reported timings use direct core factors; \cref{app:core-regimes}
states the additional conditions for an approximate core.
The current-equation and nonlinear analysis draws on balanced spectra,
GMRES polynomial estimates and inexact Newton theory
\citep{tang2009twolevel,liesen2020ideal,dembo1982inexact,eisenstat1996forcing}.
Established spectral normalization and residual-minimizing combinations
supply the separate strength controls
\citep{frangella2023nystrom,greif2011multipreconditioning,
greif2013weights,ayuso2014combined}.

\section{The condensed mechanical/contact split}
\label{sec:algebra}

For every positive integer $k$, $I_k$ denotes the $k\times k$ identity matrix.
The symbol $\norm{\cdot}_2$ denotes the Euclidean norm for vectors and the
induced spectral norm for matrices. The Frobenius norm $\norm{\cdot}_F$
is the square root of the sum of squared matrix entries.
For symmetric matrices $X$ and $Y$,
$X\preceq Y$ denotes the Loewner order, meaning that $Y-X$ is positive
semidefinite; $X\succeq Y$ denotes the reverse relation.

The construction starts from the displacement equation actually solved.
Tet10/P0 denotes a ten-node quadratic tetrahedron with one constant cell
pressure, eliminated locally. Let $u_e$ be its displacement vector,
$c_e(u_e)$ its integrated volume constraint, $V_e>0$ its reference volume,
$\kappa_e>0$ its bulk modulus and $\Psi_{\mathrm{iso},e}$ its isochoric energy.
The local pressure compliance is $D_e=V_e/\kappa_e$.
The latent cell pressure $p_e$ is eliminated before the global solve:
\begin{equation}
 R_{p,e}=c_e-D_ep_e=0,\qquad
 p_e=D_e^{-1}c_e,\qquad B_e=\frac{\partial c_e}{\partial u_e}.
\label{eq:local-pressure}
\end{equation}
Substitution gives the condensed energy and its complete tangent,
\begin{align}
 \Psi_e(u_e)&=\Psi_{\mathrm{iso},e}(u_e)
                 +\tfrac12D_e^{-1}c_e(u_e)^2,
 \label{eq:condensed-energy}\\
 K_e^{\mathrm{mat}}
 &=\underbrace{\nabla^2\Psi_{\mathrm{iso},e}}_{K_{\mathrm{iso},e}}
   +\underbrace{B_e^TD_e^{-1}B_e}_{K_{\mathrm{vol},e}\succeq0}
   +\underbrace{p_e\nabla^2c_e}_{G_e}.
 \label{eq:material-split}
\end{align}
The positive condensed outer product and signed pressure-geometric
curvature are different parts of this derivative.
Let $B,D$ assemble the local $B_e,D_e$ and let $R_p$ be the assembled
pressure residual. Equivalently, elimination of the pressure increment
from a mixed Newton block with off-diagonal blocks $B^T,B$ and pressure block $-D$ adds
$B^TD^{-1}B$ to the displacement tangent and $-B^TD^{-1}R_p$ to its RHS.
Here the pressure residual is already zero after substitution.

All subsequent matrices use the $n$ free displacement coordinates.
Let $q$ be the number of condensed cells, $B\in\R^{q\times n}$ the assembled
constraint Jacobian and $D=\operatorname{diag}(D_e)$.
The reusable mechanical core for the primary construction is
\begin{equation}
 M=K_0+K_{\mathrm{vol}},\qquad
 K_{\mathrm{vol}}=B^TD^{-1}B,\qquad
 K_0=K_{\mathrm{plain}}+K_{\mathrm{iso}}+K_{\mathrm{num}}.
\label{eq:experimental-factor-base}
\end{equation}
Here $K_{\mathrm{plain}}$ is the plain-displacement material tangent,
$K_{\mathrm{iso}}$ the condensed cells' isochoric contribution and
$K_{\mathrm{num}}$ any specified numerical regularization.
The signed assembled pressure curvature $G$, follower-pressure tangent
$F_{\mathrm{pneu}}$ and other specified terms remain in the complete equation.
The primary and near-repeat states have zero friction, mass, damping,
$K_{\mathrm{num}}$ and other additional terms.
The larger comparisons explicitly state their different complete
noncontact core $\overline M$.

\subsection{A contact factor in free displacement coordinates}

Let $c$ be the number of active point--triangle or edge--edge pairs.
For pair $a\in\{1,\ldots,c\}$, let
$\widehat K_a^+\in\R^{12\times12}$ be the positive-semidefinite (PSD) projection of its
normal-barrier Hessian.
The map $T_a\in\R^{12\times d_a}$ takes its pair-local quadratic
finite-element coordinates to the four collision vertices.
The reported discretization has $d_a=36$.
Define
\begin{equation}
 C_{a,\mathrm{asm}}=T_a^T\widehat K_a^+T_a,\qquad
 \tfrac12(C_{a,\mathrm{asm}}+C_{a,\mathrm{asm}}^T)=V_a\Lambda_aV_a^T.
\label{eq:local-contact-pullback}
\end{equation}
Here $V_a$ is orthogonal and $\Lambda_a$ is diagonal.
Retain the eigenvalues above the specified numerical rank tolerance,
with corresponding factors $V_{a,+},\Lambda_{a,+}$ and count $k_a$.
The local row factor is
\begin{equation}
 L_a=\Lambda_{a,+}^{1/2}V_{a,+}^T,\qquad
 C_{a,\mathrm{fac}}=L_a^TL_a.
\label{eq:local-contact-factor}
\end{equation}

Let $n_{\mathrm{full}}$ be the complete-coordinate dimension and
$E\in\R^{n_{\mathrm{full}}\times n}$ inject free coordinates,
so $E^TE=I_n$ and the free-coordinate projector is $P_f=EE^T$.
Let $S_a\in\R^{d_a\times n_{\mathrm{full}}}$ restrict a complete displacement vector to pair $a$.
Stacking the local rows gives
\begin{equation}
 U=\begin{bmatrix}L_1S_1E\\ \vdots\\L_cS_cE\end{bmatrix},
 \qquad m=\sum_{a=1}^c k_a,\qquad
 U^TU=\sum_{a=1}^c E^TS_a^TC_{a,\mathrm{fac}}S_aE.
\label{eq:global-contact-factor}
\end{equation}
Thus each row is a contact interaction in the original free coordinates.
The factor-reconstruction remainder is retained explicitly:
\begin{equation}
 C_{\mathrm{asm}}=\sum_a E^TS_a^TC_{a,\mathrm{asm}}S_aE,\qquad
 E_{\mathrm{ipc}}=C_{\mathrm{asm}}-U^TU.
\label{eq:free-contact-reconstruction}
\end{equation}
The specified pair-local truncation therefore changes the reference factor,
with its discrepancy still present in the complete operator.

For zero prescribed displacement increments, let $A_{\mathrm{full}}$ and
$R_{\mathrm{full}}$ be the boundary-conditioned matrix and complete-coordinate
residual. With $K_{\mathrm{other}}$ collecting other specified contributions,
restriction gives
\begin{equation}
 A=E^TA_{\mathrm{full}}E=M+U^TU+N,\qquad
 N=G+F_{\mathrm{pneu}}+K_{\mathrm{other}}+E_{\mathrm{ipc}},
 \qquad b=-E^TR_{\mathrm{full}}.
\label{eq:full-decomposition}
\end{equation}
Indeed, $P_fE=E$ and the constrained diagonal vanishes under $E^T(\cdot)E$.
This also shows the equivalence to assembling each complete-coordinate
category as $P_fXP_f$ before inserting the Dirichlet diagonal.
Nonzero prescribed increments would require a lift from the unprojected
physical operator; they are outside the reported systems.
Define the contact/core reference by
\begin{equation}
 H=M+U^TU,\qquad A=H+N.
\label{eq:general-A}
\end{equation}

The primary $36$ pairs comprise $10$ point--triangle and $26$ edge--edge
pairs and give $94$ stored rows.
Their free-category reconstruction discrepancies are
$5.63\times10^{-16}$ and $4.81\times10^{-13}$ in relative Frobenius norm.
Small negative eigenvalues of nominally PSD pullbacks were below the specified
relative PSD tolerance; every retained mode satisfied that tolerance.
Those observations qualify the numerical factorization, with its nonzero
remainder retained in $N$.

A fixed nonsingular symmetric $M$ permits core-conditioned selection;
$M\succ0$ supplies its energy and spectral interpretation.
The reported primary large systems use a fixed cuDSS general-matrix factor.
Global Cholesky and inertia were unrecorded, so primary global-core SPD
remains unresolved. The follower tangent is symmetric to float64 roundoff on
those states, but the full class permits nonsymmetric $A$.
The inverse construction below states its weaker algebraic requirements;
the experiments check the complete residual.

\section{Interaction-space preconditioner}
\label{sec:method}

The abstract problem is to replace the contact update $U^TU$ in
$H=M+U^TU$ by a rank-$r$ factor-space projection while leaving the reusable
core $M$ intact.  Three layers of the construction have different hypotheses.
Given an orthonormal factor-space basis, the Woodbury action requires a
nonsingular core and a nonsingular reduced system; symmetry is unnecessary.
Selection from the core-response matrix uses a fixed symmetric nonsingular
core, which may be indefinite.  The energy ordering, exact spectrum, and
optimality results specialize further to an SPD core.  This section defines the selectors and retained spaces. Their inverse
application and maintained realization follow in \cref{sec:selector-application};
\cref{sec:theory} then gives the reference spectral analysis.

\subsection{Core-conditioned relevance}
Assume first that $M=M^T$ is fixed and nonsingular.  This is the operational
hypothesis for the core-conditioned eigenselector.  It permits an indefinite
core.

For an interaction coefficient $\gamma\in\R^m$, define the displacement load
$f_\gamma=U^T\gamma$ and the core response $z_\gamma=M^{-1}f_\gamma$.  The
interaction-response matrix is $G_M=UM^{-1}U^T$.  Mapping the response back to
interaction coordinates gives
\begin{equation}
 \gamma\ \xrightarrow{\ U^T\ }\ f_\gamma
 \ \xrightarrow{\ M^{-1}\ }\ z_\gamma
 \ \xrightarrow{\ U\ }\ G_M\gamma,
 \qquad G_M=UM^{-1}U^T.
\label{eq:metric-response}
\end{equation}
The symmetry of $M$ makes $G_M$ symmetric. Let $q_i$, $i=1,\ldots,m$,
be its orthonormal eigenvectors and write its eigendecomposition as
\begin{equation}
 G_M=Q\Lambda Q^T,
 \quad Q\in\R^{m\times m},
 \quad \Lambda=\diag(\lambda_1,\ldots,\lambda_m),
 \quad \lambda_1\ge\cdots\ge\lambda_m.
\label{eq:metric-gram}
\end{equation}
For $0\le r\le m$, let $Q_r=[q_1,\ldots,q_r]$ and
$\Lambda_r=\diag(\lambda_1,\ldots,\lambda_r)$.  The retained row factor $R_r$
and enriched preconditioner $P_r$ are
\begin{equation}
 R_r=Q_r^TU,
 \qquad P_r=M+R_r^TR_r.
\label{eq:Pr}
\end{equation}
Since $R_rM^{-1}R_r^T=\Lambda_r$, the inverse action exists when
$I_r+\Lambda_r$ is nonsingular.  An indefinite $M$ therefore leaves the
algebraic correction intact.  In that regime, the prototype uses the descending
algebraic eigenvalue order displayed in \eqref{eq:metric-gram}.  Signed response
values have no positive-energy ordering, so this choice is empirical and is
separate from the minimax solution proved below for an SPD core.

Now suppose that $M$ is SPD.  Then
\begin{equation}
 \gamma^TG_M\gamma=f_\gamma^TM^{-1}f_\gamma=z_\gamma^TMz_\gamma.
\label{eq:metric-work}
\end{equation}
Thus $G_M$ ranks combinations of contact curvature by the core-energy of their
induced displacement responses. This is the precise sense in which an
interaction is relevant to the correction.

Let $s$ denote the rank of the interaction factor in core-scaled coordinates:
\begin{equation}
 s=\rank(UM^{-1/2})=\rank U.
\label{eq:interaction-rank}
\end{equation}
The first $s$ eigenvalues in \eqref{eq:metric-gram} are positive and the
remaining eigenvalues are zero.  For $0\le r\le s$, the retained $Q_r$ is the
leading left singular space of $UM^{-1/2}$.  Set
$Q_s=[q_1,\ldots,q_s]$ and
$\Lambda_s=\diag(\lambda_1,\ldots,\lambda_s)$.
For $i=1,\ldots,s$, define the induced response $x_i$ of $q_i$ and its
core-normalized form $\widehat x_i$ by
\begin{equation}
 x_i=M^{-1}U^Tq_i,
 \qquad \widehat x_i=x_i/\sqrt{\lambda_i}.
\label{eq:normalized-response}
\end{equation}
The normalized response satisfies $\widehat x_i^TM\widehat x_i=1$ and
$\widehat x_i^TU^TU\widehat x_i=\lambda_i$. Large eigenvalues therefore
identify responses with a large contact-to-core Rayleigh quotient.  This is a
tangent-energy interpretation; penetration, force, and nonlinear trajectory
error require separate estimates.

\section{Selector realization and inverse application}
\label{sec:selector-application}

\subsection{Raw-factor proxy}
For \TOGEARIR, write the eigendecomposition of the raw Gram matrix $G_0$ as
\begin{equation}
 G_0=UU^T=\widehat Q\widehat\Lambda\widehat Q^T,
\label{eq:raw-gram}
\end{equation}
with the eigenvalues in $\widehat\Lambda$ ordered nonincreasingly.  Let
$\widehat Q_r$ contain the corresponding $r$ leading eigenvectors.  The raw
factor-space projector and retained row factor are
\begin{equation}
 \widehat\Pi_r=\widehat Q_r\widehat Q_r^T,
 \qquad \widehat R_r=\widehat Q_r^TU.
\label{eq:raw-selector}
\end{equation}
This is the Euclidean rank-$r$ spectral truncation of $U^TU$.  It omits the
core metric during selection.  Under its SPD hypothesis,
Proposition~\ref{prop:selector-optimality} supplies a posterior condition bound once
the worst omitted core-normalized interaction in \eqref{eq:eta} is evaluated
for the raw projector. The raw eigenvalues alone leave that
metric quantity undetermined.

\subsection{Woodbury action}
Let $Q_r\in\R^{m\times r}$ be any supplied orthonormal factor-space basis and
set $R_r=Q_r^TU$.  Suppose that $M$ is nonsingular and define
\begin{equation}
 Z_r=M^{-1}R_r^T,
 \qquad S_r=I_r+R_rZ_r.
\label{eq:ZS}
\end{equation}
If $S_r$ is nonsingular, then for $v\in\R^n$ the Woodbury inverse action is
\begin{equation}
 (M+R_r^TR_r)^{-1}v
 =M^{-1}v-Z_rS_r^{-1}R_rM^{-1}v.
\label{eq:woodbury}
\end{equation}
These are the complete algebraic requirements; $M$ may be indefinite or
nonsymmetric.  When $M=M^T\succ0$, the reduced matrix $S_r$ is automatically
SPD.  For
core-response eigenvectors of a symmetric nonsingular $M$,
$R_rM^{-1}R_r^T=\Lambda_r$, so $S_r=I_r+\Lambda_r$.

The explicit frozen-action algorithm is retained at the end of this section
in \cref{alg:togeari}. The maintained lifecycle is given first in
\cref{alg:maintained-lifecycle}.

The inverse formula also accepts a random or externally supplied basis.
\Cref{alg:togeari} includes the selection that defines TOGEARI.
Direct formation of $G_M$ uses the $m$ responses $M^{-1}U^T$;
the equivalent supported-compliance construction in
\cref{sec:high-contact-qualification} obtains the metric space through
coordinate responses. \TOGEARIR selects from $UU^T$ before acquiring
only the $r$ retained responses.
For a nonsymmetric core, use a raw or supplied basis; symmetric-indefinite
metric selection retains its empirical signed ordering.
The metric guarantees require the stated SPD hypotheses.
Approximate and variable-core regimes are described in \cref{app:core-regimes}.

The GPU implementation used in the experiments symmetrizes the
computed reduced matrix $S_r$ and factors it by Cholesky. Successful reduced
Cholesky is therefore a numerical requirement of that implementation. A
general LU factorization of $M$ alone does not establish this requirement.
The algebraic cases in which $S_r$ is indefinite or nonsymmetric require an
appropriate reduced factorization and separate numerical verification;
they are outside the reported GPU comparisons. The complete Newton
matrix $A$ continues to enter FGMRES without this reduced-matrix restriction.

\subsection{Contact support and the cost of a retained space}
\label{sec:response-representations}

The rank of a correction and the storage of its inverse action are separate
design choices. Contact-compliance methods already exploit the restriction of
contact loads to a small set of mechanical coordinates
\citep{zeng2022contact}. We apply the same Woodbury identity to distinguish
the cost of compression from the cost of storing full displacement responses.

In this subsection, $s$ denotes the number of coordinates touched by $U$,
which is distinct from the rank in \eqref{eq:interaction-rank}. Let
$E_s\in\R^{s\times n}$ restrict a displacement vector to those coordinates.
Thus $E_sE_s^T=I_s$. Define the supported contact matrix and core compliance by
\begin{equation}
 C_s=E_sU^TUE_s^T,\qquad W_s=E_sM^{-1}E_s^T.
\label{eq:supported-contact-compliance}
\end{equation}
The contact tangent is $U^TU=E_s^TC_sE_s$. For the retained factor $R_r$ in
\eqref{eq:Pr}, define $R_{r,s}=R_rE_s^T$, so that
$R_r=R_{r,s}E_s$ and
$S_r=I_r+R_{r,s}W_sR_{r,s}^T$.

The cached response $Z_r=M^{-1}E_s^TR_{r,s}^T$ has $nr$ entries and gives
one core application per preconditioner action. An equivalent action can
omit this cache. For an input $v$, set
\begin{equation}
 q=M^{-1}v,\qquad
 g=S_r^{-1}R_{r,s}E_sq,\qquad
 z=q-M^{-1}E_s^TR_{r,s}^Tg.
\label{eq:implicit-retained-response}
\end{equation}
Substitution of $Z_r$ in \eqref{eq:woodbury} proves that
$z=P_r^{-1}v$ whenever the stated inverses exist. This realization uses two
core applications and stores $sr+r^2$ dense entries for the supported root
and reduced factor, up to pivot arrays and temporary working memory. Its reduced
matrix can be formed by streaming $r$ core responses and restricting each
output before discarding the full vector; a complete $W_s$ is unnecessary.

At the full-contact endpoint, define $P_{\rm full}=M+E_s^TC_sE_s$ and
assume that $I_s+C_sW_s$ is nonsingular. Its
support-space action is
\begin{equation}
 q=M^{-1}v,\qquad
 h=(I_s+C_sW_s)^{-1}C_sE_sq,\qquad
 z=q-M^{-1}E_s^Th.
\label{eq:implicit-full-contact}
\end{equation}
Indeed, multiplication gives
\[
 P_{\rm full}z
 =v+E_s^T\{C_sE_sq-(I_s+C_sW_s)h\}=v.
\]
No inverse of $C_s$ is required. The full action stores the supported contact
matrix and a factor of $I_s+C_sW_s$, with approximately $2s^2$ entries,
and uses two core applications. Building $W_s$ requires $s$ core-response
columns. A diagnostic copy of $W_s$, setup streams, core factors, and Krylov
storage are additional costs and are reported separately.

These are alternative realizations of established inverse identities.
Their floating-point errors and total costs can differ. In particular,
$r<s$ alone implies no ordering between an $nr$ response cache and a
full $s\times s$ compliance realization. A memory comparison must also
consider the implicit rank-$r$ action in
\eqref{eq:implicit-retained-response}. The later measurements identify the
realization used by each comparison method; they make no representation-independent
optimality claim.

\subsection{Maintaining responses through current supported images}
\label{sec:maintained-response}

Repeated equations can share a useful response space even when the weights
of their contact corrections change. Balanced two-level preconditioning
provides an established way to update the operator represented on that space
\citep[Definition~2.1 and Section~2.3.4]{tang2009twolevel}.
Contact-compliance methods likewise separate persistent mechanical mappings
from current contact directions \citep[Section~6]{zeng2023implicit}.
We use these constructions to maintain the responses selected by
\TOGEARI. The following realization exposes the supported data that change
and the complete-equation terms that remain outside the held reference.

Let $j$ index the complete equations $A_j=M_j+C_j+N_j$, where $M_j$ is
the mechanical core, $C_j$ the represented contact tangent and $N_j$ the remaining
terms. Retain the coordinate restriction $E_s$ from
\cref{eq:supported-contact-compliance}. Let $F_\star\in\R^{n\times n}$
be the fixed reference matrix whose factor is prepared once, and let
$C_\star$ be the contact contribution already included in that reference.
The material and complete choices are
\[
 (F_\star,C_\star)=(M_0,0)\quad\hbox{or}\quad
 (F_\star,C_\star)=(A_0,C_0).
\]
Define the signed contact change and current reference by
\[
 \Delta C_j=C_j-C_\star=E_s^T\Delta C_{s,j}E_s,
 \qquad H_j=F_\star+\Delta C_j.
\]
Suppose $F_\star\succ0$ and $H_j\succ0$ for the positivity statements
below. The change $\Delta C_j$ may be indefinite. Choose a full-column-rank
supported load matrix $L_s\in\R^{s\times r}$ and set
\[
 L=E_s^TL_s,\qquad V=F_\star^{-1}L.
\]
TOGEARI supplies these loads through its initial raw or metric selection.
Let $U_0$ factor $C_0=U_0^TU_0$ and choose the basis $Q_{r,0}$ as in
\cref{sec:method,sec:selector-application}, set $R_{r,0}=Q_{r,0}^TU_0$,
and take $L_s=E_sR_{r,0}^T$. The support contains these initial loads.
This instance assumes that $R_{r,0}$ has row rank $r$.
The following proposition applies to any full-column-rank $L_s$ under the
stated reference assumptions; this choice identifies its TOGEARI realization.
A change of basis may normalize $V^TF_\star V$ once, with the same
transformation applied to $L_s$. Define the current supported image, its lift, and its
coarse matrix by
\begin{equation}
 Y_{s,j}=L_s+\Delta C_{s,j}E_sV,\qquad
 Y_j=E_s^TY_{s,j},\qquad G_j=V^TY_j.
\label{eq:maintained-images}
\end{equation}

\begin{proposition}[Supported realization of the balanced action]
\label{prop:maintained-balanced}
Under these hypotheses, $Y_j=H_jV$ and $G_j\succ0$. Define the coarse
inverse $Q_j=VG_j^{-1}V^T$ and the balanced inverse
\begin{equation}
 \mathcal P_j=Q_j+(I-Q_jH_j)F_\star^{-1}(I-H_jQ_j).
\label{eq:maintained-balanced}
\end{equation}
Then $\mathcal P_j\succ0$ and $\mathcal P_jY_j=V$. For an input
$b\in\R^n$, its action is obtained from
\begin{equation}
 \begin{aligned}
 a&=V^Tb, & t&=F_\star^{-1}(b-E_s^TY_{s,j}G_j^{-1}a),\\
 x&=t+VG_j^{-1}(a-Y_{s,j}^TE_st).&&
 \end{aligned}
\label{eq:maintained-action}
\end{equation}
Thus $x=\mathcal P_jb$. Updating the supported contact matrix changes
$Y_{s,j}$ and $G_j$ without a new core solve or a new full response matrix.
\end{proposition}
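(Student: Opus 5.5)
We verify the four assertions of \cref{prop:maintained-balanced} in turn. Each uses only the definitions in \eqref{eq:maintained-images} and the positivity hypotheses $F_\star\succ0$, $H_j\succ0$.

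\emph{Images and the coarse matrix.} Since $V=F_\star^{-1}L$, we have $F_\star V=L=E_s^TL_s$. Also $\Delta C_jV=E_s^T\Delta C_{s,j}E_sV$. Adding these gives $H_jV=E_s^T(L_s+\Delta C_{s,j}E_sV)=E_s^TY_{s,j}=Y_j$, so $Y_j=H_jV$. For the coarse matrix, $G_j=V^TY_j=V^TH_jV$. This matrix is symmetric. It is positive definite because $H_j\succ0$ and $V=F_\star^{-1}E_s^TL_s$ has full column rank. The latter holds because $F_\star^{-1}$ is nonsingular and $E_s^T$ is injective ($E_sE_s^T=I_s$), so $E_s^TL_s$ inherits the full column rank of $L_s$.

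\emph{Positivity of $\mathcal P_j$.} Set $Q_j=VG_j^{-1}V^T$. Since $H_j$ and $G_j$ are symmetric, $(I-Q_jH_j)^T=I-H_jQ_j$. Hence $\mathcal P_j=Q_j+W^TF_\star^{-1}W$ with $W=I-H_jQ_j$, which shows $\mathcal P_j$ is symmetric positive semidefinite. For definiteness, suppose $b^T\mathcal P_jb=0$. Then both $V^Tb=0$ and $Wb=0$. The first gives $Q_jb=0$, and then the second reduces to $Wb=b-H_jQ_jb=b$, so $b=0$.

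\emph{The identity $\mathcal P_jY_j=V$.} Using $Y_j=H_jV$, compute $Q_jH_jV=VG_j^{-1}V^TH_jV=VG_j^{-1}G_j=V$. Therefore $(I-H_jQ_j)H_jV=H_jV-H_jV=0$, so the second term of $\mathcal P_j$ annihilates $Y_j$, and $\mathcal P_jY_j=Q_jY_j=V$.

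\emph{The action formula.} We expand \eqref{eq:maintained-balanced} applied to $b$. With $a=V^Tb$, we get $H_jQ_jb=H_jVG_j^{-1}a=E_s^TY_{s,j}G_j^{-1}a$. The middle factor is then $F_\star^{-1}(I-H_jQ_j)b=t$. For the left factor we need $Q_jH_jt=VG_j^{-1}V^TH_jt$. This is where care is required: we must identify $V^TH_j$ with $Y_j^T=Y_{s,j}^TE_s$. That identification uses the symmetry of $H_j$, so that $V^TH_j=(H_jV)^T=Y_j^T$, together with the support representation $Y_j=E_s^TY_{s,j}$. Collecting the terms gives
\[
\mathcal P_jb=VG_j^{-1}a+t-VG_j^{-1}Y_{s,j}^TE_st=x,
\]
as claimed.

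\emph{Closing remark.} The only ingredients that depend on $j$ are the supported matrix $\Delta C_{s,j}$ and the coarse matrix $G_j$. $V$ and the factor of $F_\star$ are fixed, so $Y_{s,j}=L_s+\Delta C_{s,j}(E_sV)$ and $G_j=(E_sV)^TY_{s,j}$ can be formed from the stored restriction $E_sV$. Here $G_j=V^TE_s^TY_{s,j}$ by definition. No new core solve and no new full response matrix are needed.

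The main point needing attention is the transpose step in the action formula, which relies on the symmetry of $H_j$. Apart from that, the argument reduces to routine bookkeeping with the projector identity $Q_jH_jV=V$.
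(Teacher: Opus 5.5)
Your proof is correct and follows the paper's argument step for step. The steps are: $Y_j=H_jV$ from $F_\star V=L$; $G_j=V^TH_jV\succ0$ from the full column rank of $V$; positivity of $\mathcal P_j$ via the two-term quadratic form; the coarse identity from $(I-H_jQ_j)Y_j=0$ and $Q_jY_j=V$; and the action formula by substitution. Your added justifications make explicit what the paper leaves implicit: injectivity of $E_s^T$ gives the rank of $V$, and the symmetry of $H_j$ (implied by $H_j\succ0$) gives $V^TH_j=Y_{s,j}^TE_s$.
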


\begin{proof}
Since $F_\star V=L$, equation~\eqref{eq:maintained-images} gives
$Y_j=F_\star V+\Delta C_jV=H_jV$. Full column rank of $V$ and positivity of $H_j$
imply $G_j=V^TH_jV\succ0$. For any $z\in\R^n$, the quadratic form of
\eqref{eq:maintained-balanced} is the sum of
$(V^Tz)^TG_j^{-1}(V^Tz)$ and
$((I-H_jQ_j)z)^TF_\star^{-1}((I-H_jQ_j)z)$.
If both vanish, $V^Tz=0$ and $(I-H_jQ_j)z=0$; the first equality
reduces the second to $z=0$. Hence $\mathcal P_j$ is SPD.
Moreover, $(I-H_jQ_j)Y_j=0$ and $Q_jY_j=V$, which prove the coarse
identity. Substituting $Y_j=H_jV$ and $Q_j=VG_j^{-1}V^T$ into
\eqref{eq:maintained-balanced} gives \eqref{eq:maintained-action}.
The update assertion follows directly from \eqref{eq:maintained-images}.
\end{proof}

\Cref{alg:maintained-lifecycle} gives the solve-phase lifecycle for the
same construction. Let $b_j$ denote the right-hand side of equation $j$,
and let $x$ denote its linear correction. Its inverse actions use the specified numerical
factorizations; the positive-definiteness and exact coarse-consistency
statements retain the hypotheses of \cref{prop:maintained-balanced}.
Successful numerical factorization alone does not establish them.

\begin{algorithm}[H]
\caption{Maintained \TOGEARI on a compatible equation sequence}
\label{alg:maintained-lifecycle}
\begin{algorithmic}[1]
\Require Initial contact factor $U_0$, prescribed dimension $r$, raw or metric
selection, reference pair $(F_\star,C_\star)$, support $E_s$, and equations $A_jx=b_j$;
metric selection also requires the initial nonsingular symmetric core $M_0$
\If{raw selection}
  \State Form $U_0U_0^T$ and take its $r$ leading eigenvectors $Q_{r,0}$
\Else
  \State Form $U_0M_0^{-1}U_0^T$ and take its $r$ leading eigenvectors $Q_{r,0}$
\EndIf
\State Form $R_{r,0}=Q_{r,0}^TU_0$ and $L_s=E_sR_{r,0}^T$; assume rank $r$
\State Factor $F_\star$ once, reusing the core factor when it is the reference
\State Solve $F_\star V=E_s^TL_s$ once
\For{each compatible current equation $j$}
  \State Form $\Delta C_{s,j}$ from $C_j-C_\star$ on the common support
  \State Form $Y_{s,j}=L_s+\Delta C_{s,j}E_sV$ and
    $G_j=V^TE_s^TY_{s,j}$
  \State Factor $G_j$ with the specified reduced routine; stop this solve
    and report if factorization fails
  \State Solve $A_jx=b_j$ by right-preconditioned FGMRES, using
    \eqref{eq:maintained-action} for each inverse application
  \State Accept the linear direction only after the independently recomputed
    full-current residual satisfies its specified tolerance
  \State Apply the unchanged geometric validity, line-search and nonlinear stopping
    criteria to the resulting trial
\EndFor
\State On a compatible episode reset, restore physical and friction state;
retain the eligible reference factor, responses and structural data
\end{algorithmic}
\end{algorithm}

The raw complete-reference route requires no separate material-core
factorization. Metric selection requires the initial nonsingular symmetric mechanical core
$M_0$ and charges that acquisition as well; the ideal relation between its
responses and complete anchoring is proved below. A prescribed rank is a
construction input, not an adaptive estimate of nonlinear error.

The supported balanced action uses one reference solve per input and retains
$nr$ response entries, $sr$ image entries and an $r$-dimensional coarse factor.
If the initial load support and later contact support lie in a larger
common coordinate set, enlarging $E_s$ leaves $V$ unchanged. The image and
small matrices must be rebuilt on the enlarged support. Coordinate containment
alone gives no retained-space quality guarantee. With a complete anchor,
contact removal gives $\Delta C_j=-C_0$ and still requires the signed update.
When $\Delta C_j=0$, expansion of \eqref{eq:maintained-balanced} and
$Q_jF_\star Q_j=Q_j$ give $\mathcal P_j=F_\star^{-1}$.

Full supported contact receives the same caching opportunity. Define
$Z_{\rm all}=F_\star^{-1}E_s^T$ and $W_{s,\star}=E_sZ_{\rm all}$. With
$q=F_\star^{-1}b$, equation~\eqref{eq:implicit-full-contact} becomes
\[
 H_j^{-1}b
 =q-Z_{\rm all}(I_s+\Delta C_{s,j}W_{s,\star})^{-1}\Delta C_{s,j}E_sq.
\]
This one-core realization stores $ns$ response entries. It supplies the
full-cache comparison in the changing-equation and complete-load experiments.

\paragraph{Core changes and initial response errors.}
Define the complete-equation remainder $\mathcal R_j=A_j-H_j$.
It is $(M_j-M_0)+N_j$ for the material reference and
$(M_j-M_0)+(N_j-N_0)$ for the complete reference.
For an approximate retained response $\widetilde V$, define its initial
response defect $E_0=F_\star\widetilde V-L$ and the image used by the
maintained construction, $\widetilde Y_j=L+\Delta C_j\widetilde V$. Let
$\widetilde{\mathcal P}_j$ denote its realized inverse and define
$T_j=\widetilde{\mathcal P}_j\widetilde Y_j-\widetilde V$.
These definitions give the exact matrix identity
\begin{equation}
 (A_j\widetilde{\mathcal P}_j-I)\widetilde Y_j
 =E_0+\mathcal R_j\widetilde V+A_jT_j.
\label{eq:maintained-defect}
\end{equation}
Indeed, the left side equals
$A_j\widetilde V-\widetilde Y_j+A_jT_j$; expansion of $A_j$ gives
the right side. For a represented right-hand side $\widetilde Y_jc$,
where $c\in\R^r$, its one-action residual is the negative of this
right side times $c$.

The term $E_0$ retains error in the initial response solves. Core changes and
the complete-operator remainder can produce dense image defects despite
unchanged sparse topology. In exact balanced algebra $T_j=0$; approximate
responses, symmetrized coarse matrices and finite actions can contribute to
this term. Equation~\eqref{eq:maintained-defect} identifies those contributions
on the represented image. Errors on omitted directions and global FGMRES
work require separate information. Every solve therefore continues
to test its residual with the current complete $A_j$.

\paragraph{Initial metric directions under complete anchoring.}
The ideal contact-inclusive reference helps explain this choice. Let
$M_0\succ0$, let $U_0\in\R^{m_0\times n}$ factor $C_0=U_0^TU_0$, and
define $F_{\rm ref}=M_0+C_0$. This ideal reference omits the additional
initial remainder $N_0$ in the actual $A_0$. Define
$G^M=U_0M_0^{-1}U_0^T$ and $G^F=U_0F_{\rm ref}^{-1}U_0^T$.

\begin{lemma}[Metric response mapping]
\label{lem:complete-anchor-metric}
The two interaction matrices satisfy
\[
 G^F=G^M(I_{m_0}+G^M)^{-1}.
\]
Let $q_i\in\R^{m_0}$ be a nonzero eigenvector of $G^M$, with positive
eigenvalue $\lambda_i$. Then
\[
 F_{\rm ref}^{-1}U_0^Tq_i
 =(1+\lambda_i)^{-1}M_0^{-1}U_0^Tq_i.
\]
Thus the exact leading metric factor-space eigendirections have the same
ordering, and their physical response spaces agree.
\end{lemma}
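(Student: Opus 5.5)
The plan is to apply the Woodbury identity to $F_{\rm ref}=M_0+U_0^TU_0$ and then read off both claims. Since $M_0\succ0$, $G^M=U_0M_0^{-1}U_0^T$ is symmetric positive semidefinite, so $I_{m_0}+G^M$ is SPD and hence nonsingular. Woodbury, in the form \eqref{eq:woodbury} with the full factor $U_0$ in place of $R_r$, gives
\[
 F_{\rm ref}^{-1}=M_0^{-1}-M_0^{-1}U_0^T(I_{m_0}+G^M)^{-1}U_0M_0^{-1}.
\]
Multiplying on the left by $U_0$ and on the right by $U_0^T$ yields $G^F=G^M-G^M(I+G^M)^{-1}G^M$. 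Writing $G^M=(I+G^M)-I$ in the middle factor collapses this to $G^M-G^M+G^M(I+G^M)^{-1}=G^M(I+G^M)^{-1}$, which is the first claim. I will also note that $G^M$ and $(I+G^M)^{-1}$ commute, so the order of the product is immaterial.

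For the response identity, I will apply the displayed inverse to $U_0^Tq_i$. Using $U_0M_0^{-1}U_0^Tq_i=G^Mq_i=\lambda_iq_i$ and $(I+G^M)^{-1}q_i=(1+\lambda_i)^{-1}q_i$, the expression becomes
\[
 M_0^{-1}U_0^Tq_i\bigl(1-\lambda_i/(1+\lambda_i)\bigr)=(1+\lambda_i)^{-1}M_0^{-1}U_0^Tq_i.
\]
An alternative check is direct: multiplying the candidate by $F_{\rm ref}$ gives $(1+\lambda_i)^{-1}(U_0^Tq_i+U_0^T\lambda_iq_i)=U_0^Tq_i$. Only invertibility of $F_{\rm ref}$ is needed here, and it follows from $M_0\succ0$ and $C_0\succeq0$.

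For the ordering statement, the eigendecomposition $G^M=Q\Lambda Q^T$ from \eqref{eq:metric-gram} together with the first identity gives $G^F=Q\,\diag\bigl(\lambda_i/(1+\lambda_i)\bigr)Q^T$. The map $\lambda\mapsto\lambda/(1+\lambda)$ is strictly increasing on $[0,\infty)$. Therefore the eigenvectors are shared, the order of the eigenvalues is preserved, and the multiplicities are the same.

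Consequently, the leading $r$-dimensional eigenspaces coincide, including the ambiguous choices when $\lambda_r=\lambda_{r+1}$, since the ties are preserved exactly. The physical response spans $F_{\rm ref}^{-1}U_0^TQ_r$ and $M_0^{-1}U_0^TQ_r$ agree, because the columns differ only by the nonzero scalars $(1+\lambda_i)^{-1}$.

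The main point requiring care is the interpretive final sentence rather than the algebra. I would state it precisely as follows: for every $r$ such that $\lambda_r>\lambda_{r+1}$, the leading invariant subspaces are identical and so are their response spans; when $\lambda_r=\lambda_{r+1}$, any admissible choice for one matrix is admissible for the other. The zero-eigenvalue directions map to zero responses in both cases.
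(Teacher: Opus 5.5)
Your proof is correct and takes essentially the same Woodbury-based route as the paper, including the same strict-monotonicity argument for $\lambda\mapsto\lambda/(1+\lambda)$. The paper is slightly more direct: it checks the one-sided identity $F_{\rm ref}M_0^{-1}U_0^T(I_{m_0}+G^M)^{-1}=U_0^T$ by multiplication (your ``alternative check'' in matrix form), which gives $G^F$ after left multiplication by $U_0$ and gives the response formula when applied to $q_i$, so it needs neither the full inverse formula nor your extra simplification step.
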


\begin{proof}
Multiplying $M_0^{-1}U_0^T(I_{m_0}+G^M)^{-1}$ by $F_{\rm ref}$ gives
$U_0^T$. Left multiplication by $U_0$ gives the interaction identity.
Applying the same relation to $q_i$ gives the response formula. The map
$\lambda\mapsto\lambda/(1+\lambda)$ is strictly increasing on
$[0,\infty)$, and the response scaling is positive.
\end{proof}

This is the spectral mapping induced by the Woodbury identity. It gives a
specific interpretation for contact softening. Let $\lambda_1\ge\cdots
\ge\lambda_n\ge0$ be the eigenvalues of $M_0^{-1/2}C_0M_0^{-1/2}$,
including zeros, and let $H_\alpha=M_0+\alpha C_0$ for $\alpha\ge0$.
Simultaneous diagonalization after scaling by $M_0^{-1/2}$ gives generalized
eigenvalues
\[
 h_i(\alpha)=\frac{1+\alpha\lambda_i}{1+\lambda_i}
\]
for $(H_\alpha,F_{\rm ref})$. Choose $1\le r<n$ with $\lambda_r>0$.
If the balanced space contains the responses
of the $r$ largest positive modes, its projections are diagonal in this
same generalized eigenbasis. The retained diagonal entries become one by
coarse consistency, and the complementary entries remain $h_i(\alpha)$.
This proves the corresponding balanced spectrum directly. For $\alpha<1$,
the initially strongest contact modes become the smallest modes of the
held-factor reference. At complete removal, with a contact nullspace, the
condition number after this correction is $1+\lambda_{r+1}$.

An arbitrary raw space need not be invariant under $G^M$. The actual
complete reference also contains $N_0$; the resolvent identity
\[
 A_0^{-1}L-F_{\rm ref}^{-1}L
 =-A_0^{-1}N_0F_{\rm ref}^{-1}L
\]
retains that effect. These qualifications distinguish the ideal metric
identity from the physical raw-space observations. General LU factors and
symmetrized small Gram matrices are used numerically; the positivity results
retain their stated hypotheses.

\subsection{Frozen inverse-action reference}

The fixed-equation construction below combines selection with the action in
\cref{eq:ZS,eq:woodbury}. It supplies the frozen comparator to the maintained
method. The signed ordering for an indefinite metric core remains empirical.

\begin{algorithm}[H]
\caption{Frozen \TOGEARI: select the interaction space and construct its action}
\label{alg:togeari}
\begin{algorithmic}[1]
\Require Fixed nonsingular core $M$, contact factor $U$, prescribed dimension
$r$, and a raw or metric selector; metric selection requires $M=M^T$
\If{raw selection}
  \State Form $G_0=UU^T$
\Else
  \State Form $G_M=UM^{-1}U^T$ using the core factor
\EndIf
\State Take $Q_r$ as the $r$ leading eigenvectors of the chosen Gram matrix
\State $R_r\gets Q_r^TU$
\State Solve $MZ_r=R_r^T$
\State Form $S_r\gets I_r+R_rZ_r$ and factor it; stop and report if factorization fails
\Function{Apply}{$v$}
  \State $y\gets M^{-1}v$
  \State $w\gets y-Z_rS_r^{-1}R_ry$
  \State \Return $w$
\EndFunction
\end{algorithmic}
\end{algorithm}

\section{Reference spectra and selection guarantees}
\label{sec:theory}

\subsection{Exact spectrum of the metric construction}
Under the SPD specialization, the first question is exact: after $r$ metric
directions are retained, which
parts of $H$ does $P_r$ reproduce, and what discrepancy is left?  Core
whitening makes the answer visible.  It converts the metric selector into an
ordinary singular-vector truncation and places $H$ and $P_r$ in one common
orthogonal basis.

For this subsection, assume $M=M^T\succ0$.  Let $W=UM^{-1/2}$, restore $s=\rank U$ from
\eqref{eq:interaction-rank}, and assume $m<n$, hence $s<n$.
Here $s$ is the factor rank; supported-coordinate counts were used locally
in \cref{sec:response-representations}.  Write
$\spec(H,P_r)$ for the multiset of generalized eigenvalues of the pair
$(H,P_r)$ and $\kappa_2$ for the spectral condition number.  Helmberg's
rank-structured result gives the leading-subspace condition number under the
identification $D_{\mathrm{Helmberg}}=M$ and
$V_{\mathrm{Helmberg}}=U^T$ \citep[Theorem~1]{helmberg2024preconditioned}.
For clarity in the present factor orientation, we record the full generalized
spectrum, whose multiplicities follow directly from simultaneous
diagonalization.

\begin{theorem}[Metric-spectrum specialization]
\label{thm:spectrum}
Assume $M=M^T\succ0$, let $H=M+U^TU$, let
$s=\rank(UM^{-1/2})$, and let $P_r$ be constructed from the $r$ leading
eigenvectors of $UM^{-1}U^T$. If $m<n$, then for $0\le r\le s$ the
generalized spectrum of $(H,P_r)$ is
\begin{equation}
 \spec(H,P_r)=
 \{1\}^{\,n-s+r}
 \cup\{1+\lambda_i:i=r+1,\ldots,s\}.
\label{eq:exact-spectrum}
\end{equation}
Consequently,
\begin{equation}
 \kappa_2(P_r^{-1/2}HP_r^{-1/2})=
 \begin{cases}
  1+\lambda_{r+1},&r<s,\\
  1,&r=s.
 \end{cases}
\label{eq:exact-kappa}
\end{equation}
\end{theorem}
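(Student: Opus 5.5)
The plan is to whiten by the core and reduce everything to a statement about a single matrix $W=UM^{-1/2}$ and its SVD. Since $M=M^T\succ0$, the generalized eigenproblem $Hx=\mu P_rx$ is equivalent, via $x=M^{-1/2}y$, to
\[
(I_n+W^TW)\,y=\mu\,(I_n+W^T\Pi_rW)\,y,\qquad \Pi_r=Q_rQ_r^T,
\]
because $R_r^TR_r=U^TQ_rQ_r^TU$. The spectrum of the pair $(H,P_r)$ is therefore the spectrum of the pair of whitened matrices $(\widetilde H,\widetilde P_r)$ with $\widetilde H=I_n+W^TW$ and $\widetilde P_r=I_n+W^T\Pi_rW$. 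Both are SPD, so the generalized eigenvalues are real and positive.

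Next I use the SVD of $W$. Since $WW^T=UM^{-1}U^T=G_M$, the eigenvectors $q_i$ of $G_M$ are left singular vectors of $W$ with singular values $\sqrt{\lambda_i}$, and exactly $s=\rank W$ of them are positive. This was already observed after \eqref{eq:interaction-rank}. Set $v_i=W^Tq_i/\sqrt{\lambda_i}$ for $i\le s$. These are orthonormal right singular vectors in $\R^n$, and I complete them to an orthonormal basis of $\R^n$; the hypothesis $m<n$ guarantees that this complement is nontrivial. Then
\[
W^TW=\sum_{i\le s}\lambda_i v_iv_i^T,\qquad W^T\Pi_rW=\sum_{i\le r}\lambda_i v_iv_i^T,
\]
so both matrices are diagonal in the same orthonormal basis. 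On $v_i$ with $i\le r$ both whitened matrices act as $1+\lambda_i$, which gives the ratio $1$. On $v_i$ with $r<i\le s$ the ratio is $(1+\lambda_i)/1$. On the $n-s$ complementary vectors both act as the identity, which again gives $1$. Counting the ones, $r+(n-s)$ in total, yields \eqref{eq:exact-spectrum}.

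For \eqref{eq:exact-kappa}, note that $P_r^{-1/2}HP_r^{-1/2}$ is SPD and similar to $P_r^{-1}H$. Its eigenvalues are therefore exactly this generalized spectrum. The largest eigenvalue is $1+\lambda_{r+1}$, since the $\lambda_i$ are ordered nonincreasingly. The smallest is $1$, which is present because $n-s+r\ge1$ by $s<n$. When $r=s$ every eigenvalue equals $1$, so the condition number is $1$.

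The main obstacle is mostly bookkeeping rather than analysis. One point needs care: repeated eigenvalues, including $\lambda_r=\lambda_{r+1}$. The choice of $Q_r$ is then not unique. However, $Q_r$ is some set of orthonormal eigenvectors, so the argument above applies to whatever basis was chosen, and the multiset result is unaffected. I will also state explicitly that the basis $\{v_i\}$ simultaneously diagonalizes both matrices. This holds because $\Pi_r$ commutes with $WW^T$. With that in place, the multiplicities follow directly.
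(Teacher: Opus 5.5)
Your proposal is correct and follows essentially the same route as the paper. Both arguments whiten by $M^{-1/2}$, take the SVD of $W=UM^{-1/2}$ with the $q_i$ as left singular vectors, diagonalize $I_n+W^TW$ and $I_n+W^T\Pi_rW$ simultaneously in the right singular basis, and count $r+(n-s)$ unit eigenvalues together with the omitted values $1+\lambda_i$; your remarks that $m<n$ guarantees a unit eigenvalue and that ties $\lambda_r=\lambda_{r+1}$ leave the multiset unchanged are sound, and the paper handles both points implicitly.
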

\begin{proof}
Since $WW^T=G_M$, choose the thin singular-value decomposition
\begin{equation}
 W=Q_s\Lambda_s^{1/2}V_s^T,
 \qquad V_s=[v_1,\ldots,v_s]\in\R^{n\times s}.
\label{eq:proof-svd}
\end{equation}
Congruence by $M^{-1/2}$ preserves the generalized eigenvalues of
$(H,P_r)$.  For the complete SPD reference it gives
\[
 M^{-1/2}HM^{-1/2}=I_n+V_s\Lambda_sV_s^T,
\]
so the contact update acts only on $\operatorname{span}(V_s)$.  The definition
$R_r=Q_r^TU$ and \eqref{eq:proof-svd} give
\[
 R_rM^{-1/2}=Q_r^TW=\Lambda_r^{1/2}V_r^T,
 \qquad V_r=[v_1,\ldots,v_r].
\]
Consequently, the whitened preconditioner is
\[
 M^{-1/2}P_rM^{-1/2}=I_n+V_r\Lambda_rV_r^T.
\]
This identity reveals what the correction has accomplished.  For
$i\le r$, both whitened matrices multiply $v_i$ by $1+\lambda_i$, so every
retained direction has generalized eigenvalue one.  For $r<i\le s$, the
reference multiplies $v_i$ by $1+\lambda_i$ and the preconditioner leaves
$v_i$ at its core value, so the generalized eigenvalue is
$1+\lambda_i$.  On the $(n-s)$-dimensional orthogonal complement of
$\operatorname{span}(V_s)$, both matrices are the identity.  There are
therefore $r+(n-s)$ unit eigenvalues and precisely the omitted values shown in
\eqref{eq:exact-spectrum}.  The smallest generalized eigenvalue is one and the
largest is $1+\lambda_{r+1}$ when an omitted direction remains.  This proves
\eqref{eq:exact-kappa}; at $r=s$ the two whitened matrices coincide.
\end{proof}

The spectrum identifies the unresolved modes.  The next corollary measures the
same discrepancy at the level of an inverse action, which is the quantity used
by the preconditioner.

\begin{corollary}[Scaled inverse error]
\label{cor:inverse-error}
Under the hypotheses of \cref{thm:spectrum},
\begin{equation}
 \norm{M^{1/2}(P_r^{-1}-H^{-1})M^{1/2}}_2
 =
 \begin{cases}
  \dfrac{\lambda_{r+1}}{1+\lambda_{r+1}},&r<s,\\[1ex]
  0,&r=s.
\end{cases}
\label{eq:inverse-error}
\end{equation}
\end{corollary}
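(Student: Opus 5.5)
We reuse the whitened representation built in the proof of \cref{thm:spectrum}. With the thin singular-value decomposition \eqref{eq:proof-svd}, that proof gives
\[
 M^{-1/2}HM^{-1/2}=I_n+V_s\Lambda_sV_s^T,\qquad
 M^{-1/2}P_rM^{-1/2}=I_n+V_r\Lambda_rV_r^T .
\]
Inverting these two congruences shows that
\[
 M^{1/2}H^{-1}M^{1/2}=(I_n+V_s\Lambda_sV_s^T)^{-1},\qquad
 M^{1/2}P_r^{-1}M^{1/2}=(I_n+V_r\Lambda_rV_r^T)^{-1}.
\]
Hence the scaled difference in \eqref{eq:inverse-error} is the difference of these two explicit inverses.

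The next step is to evaluate both inverses in one orthonormal basis. Complete $v_1,\ldots,v_s$ to an orthonormal basis of $\R^n$. Both whitened matrices are diagonal in that basis, so their inverses are as well.

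\begin{itemize}
 \item On $v_i$ with $i\le r$, both inverses equal $(1+\lambda_i)^{-1}$.
 \item On the complement of $\operatorname{span}(V_s)$, both inverses equal $1$.
 \item On $v_i$ with $r<i\le s$, the preconditioner inverse equals $1$ and the reference inverse equals $(1+\lambda_i)^{-1}$.
\end{itemize}

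Therefore
\[
 M^{1/2}(P_r^{-1}-H^{-1})M^{1/2}
 =\sum_{i=r+1}^{s}\Bigl(1-\frac{1}{1+\lambda_i}\Bigr)v_iv_i^T
 =\sum_{i=r+1}^{s}\frac{\lambda_i}{1+\lambda_i}\,v_iv_i^T .
\]

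This matrix is symmetric positive semidefinite and already diagonalized. Its spectral norm is therefore its largest eigenvalue, namely the maximum of $\lambda_i/(1+\lambda_i)$ over $r<i\le s$. The map $\lambda\mapsto\lambda/(1+\lambda)$ is increasing on $[0,\infty)$, as noted in the proof of \cref{lem:complete-anchor-metric}, and $\lambda_{r+1}\ge\cdots\ge\lambda_s$. So the maximum is attained at $i=r+1$ whenever $r<s$. When $r=s$ the sum is empty and the difference is zero, which matches \eqref{eq:inverse-error}.

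No step is a serious obstacle. The only care needed is in justifying the passage from the generalized-eigenvalue statement of \cref{thm:spectrum} to an identity for the inverses themselves. The generalized spectrum alone does not determine the norm of a difference of inverses, so the argument must use the explicit common diagonalization in the $v_i$ basis from that proof, not just \eqref{eq:exact-spectrum}.
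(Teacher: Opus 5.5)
Your proposal is correct and follows essentially the same route as the paper's proof. Both invert the two whitened matrices from the proof of \cref{thm:spectrum}, observe that they agree on the retained directions and on the complement of $\operatorname{span}(V_s)$, read off the difference $\lambda_i/(1+\lambda_i)$ on each omitted $v_i$, and use monotonicity to locate the maximum at $i=r+1$. Your explicit rank-one sum and your remark that the generalized spectrum alone would not suffice make the common-basis step slightly more explicit than the paper does.
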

\begin{proof}
The two whitened matrices in the proof of \cref{thm:spectrum} agree on every
retained direction and on the orthogonal complement of
$\operatorname{span}(V_s)$.  Indeed,
\[
 M^{1/2}P_r^{-1}M^{1/2}
 =\bigl(M^{-1/2}P_rM^{-1/2}\bigr)^{-1},
\]
and the same identity holds for $H$.  Their inverse actions can therefore
differ only on an omitted $v_i$.  There, the scaled inverse of $P_r$ has
eigenvalue one and the scaled inverse of $H$ has eigenvalue
$(1+\lambda_i)^{-1}$.  The difference is
$\lambda_i/(1+\lambda_i)$.  This function is increasing for
$\lambda_i\ge0$, so the spectral norm is attained by the first omitted mode,
$i=r+1$.  No mode is omitted when $r=s$, and the difference is then zero.
\end{proof}

\subsection{Worst omitted interaction and selector optimality}
The exact spectrum explains the metric construction after its eigenvectors have
been chosen.  A second question concerns the choice itself: among all
rank-$r$ factor-space projectors, which one leaves the smallest possible
interaction after core normalization?

Continue under $M=M^T\succ0$.  For an orthogonal factor-space projector
$\Pi\in\R^{m\times m}$ of rank $r$,
define
\begin{equation}
 \eta(\Pi)=\norm{(I_m-\Pi)UM^{-1/2}}_2^2
 =\max_{x\ne0}\frac{\norm{(I_m-\Pi)Ux}_2^2}{x^TMx},
\label{eq:eta}
\end{equation}
and $P_\Pi=M+U^T\Pi U$.

\begin{proposition}[Optimal omitted interaction]
\label{prop:selector-optimality}
For $0\le r\le s$, the metric projector $\Pi_r=Q_rQ_r^T$ minimizes $\eta(\Pi)$ over all
rank-$r$ orthogonal projectors.  Its value is $\lambda_{r+1}$ for $r<s$ and
zero for $r=s$.  Every such projector also satisfies
\begin{equation}
 \kappa_2(P_\Pi^{-1/2}HP_\Pi^{-1/2})\le1+\eta(\Pi).
\label{eq:any-selector-bound}
\end{equation}
\end{proposition}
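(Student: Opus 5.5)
The proof works in core-whitened coordinates, as in \cref{thm:spectrum}. Set $W=UM^{-1/2}$, so $WW^T=G_M$ has eigenvalues $\lambda_1\ge\cdots\ge\lambda_s>0=\lambda_{s+1}=\cdots$. The substitution $x=M^{-1/2}y$ turns the Rayleigh quotient in \eqref{eq:eta} into $\norm{(I_m-\Pi)Wy}_2^2/\norm{y}_2^2$, which justifies the second equality there. Thus $\eta(\Pi)=\norm{(I_m-\Pi)W}_2^2$, the largest eigenvalue of $(I_m-\Pi)G_M(I_m-\Pi)$.

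\emph{Value at the metric projector.} For $\Pi_r=Q_rQ_r^T$ we have $(I_m-\Pi_r)G_M(I_m-\Pi_r)=\sum_{i>r}\lambda_iq_iq_i^T$. Its largest eigenvalue is $\lambda_{r+1}$ when $r<s$, and zero when $r=s$ because all remaining $\lambda_i$ vanish.

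\emph{Lower bound for any rank-$r$ projector.} Let $\Pi$ be a rank-$r$ orthogonal projector. Its range has dimension $r$, so the $(r+1)$-dimensional space $\operatorname{span}(q_1,\ldots,q_{r+1})$ (for $r<s$) contains a unit vector $w$ with $\Pi w=0$. Then
\[
 \eta(\Pi)\ge \norm{W^T(I_m-\Pi)w}_2^2=w^TG_Mw\ge\lambda_{r+1},
\]
since $\norm{(I_m-\Pi)W}_2=\norm{W^T(I_m-\Pi)}_2$. This is the Eckart--Young/Courant--Fischer step, and it proves minimality. The case $r=s$ is trivial because $\eta\ge0$.

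\emph{Condition bound \eqref{eq:any-selector-bound}.} Whiten both matrices:
\[
 \widetilde H=I_n+W^TW,\qquad \widetilde P=I_n+W^T\Pi W,\qquad \widetilde H-\widetilde P=W^T(I_m-\Pi)W\succeq0,
\]
using that $I_m-\Pi$ is a projector. Congruence preserves the generalized eigenvalues, so it suffices to bound the pencil $(\widetilde H,\widetilde P)$. Since $\widetilde H\succeq\widetilde P$, every generalized eigenvalue is at least one. For the upper bound, take the Rayleigh quotient for a unit-direction $y$:
\[
 \frac{y^T\widetilde Hy}{y^T\widetilde Py}=1+\frac{y^TW^T(I_m-\Pi)Wy}{y^T\widetilde Py}\le1+\frac{\eta(\Pi)\norm{y}_2^2}{\norm{y}_2^2},
\]
because $\widetilde P\succeq I_n$. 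Hence the largest generalized eigenvalue is at most $1+\eta(\Pi)$ and the smallest is at least $1$. Their ratio gives the condition-number bound, since $\kappa_2(P_\Pi^{-1/2}HP_\Pi^{-1/2})$ is exactly that ratio for the SPD pair.

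The main care point is this last step. It uses $P_\Pi\succeq M$, that is $\widetilde P\succeq I_n$, to replace the denominator by $\norm{y}_2^2$. Dropping $W^T\Pi W$ from the denominator is harmless because that term is positive semidefinite; the remaining steps are routine.
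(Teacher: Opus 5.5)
Your proof is correct and follows the paper's own route. For optimality you write out the Courant--Fischer dimension count behind the singular-value min--max principle for $W=UM^{-1/2}$, which the paper simply cites. Your condition bound is the paper's sandwich $P_\Pi\preceq H\preceq(1+\eta(\Pi))P_\Pi$, obtained from $\norm{(I_m-\Pi)Ux}_2^2\le\eta(\Pi)\,x^TMx\le\eta(\Pi)\,x^TP_\Pi x$, here expressed in whitened coordinates.
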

\begin{proof}
By \eqref{eq:eta}, $\eta(\Pi)$ is the squared spectral norm of the part of
$W=UM^{-1/2}$ left outside $\operatorname{range}(\Pi)$.  The singular-value
min--max principle makes the span of the first $r$ left singular vectors of
$W$ optimal.  Those vectors are the columns of $Q_r$, and the squared first
omitted singular value is $\lambda_{r+1}$.

For the condition bound, set $E_\Pi=U^T(I_m-\Pi)U$, so that
$H=P_\Pi+E_\Pi$.  The definition of $\eta$ gives, for every $x\in\R^n$,
\[
 x^TE_\Pi x=\norm{(I_m-\Pi)Ux}_2^2
 \le \eta(\Pi)x^TMx
 \le \eta(\Pi)x^TP_\Pi x.
\]
Hence $P_\Pi\preceq H\preceq(1+\eta(\Pi))P_\Pi$.  The generalized
eigenvalues lie in $[1,1+\eta(\Pi)]$, which proves
\eqref{eq:any-selector-bound} and recovers the arbitrary-subspace estimate in
this factor orientation \citep[Theorem~2]{helmberg2024preconditioned}.
\end{proof}

Thus \TOGEARIM answers the selection question directly: it minimizes the
largest core-normalized contact energy left outside the retained factor space.
Any other projector has the same form of posterior condition bound once its
value of $\eta$ is evaluated.

\subsection{Exact endpoint for an arbitrary retained space}

Let $M\succ0$, let $U\in\R^{m\times n}$ with $m<n$, and let
$Q=[Q_1,Q_2]\in\R^{m\times m}$ be orthogonal, with $Q_1$ having $r$ columns.
Set $P=M+U^TQ_1Q_1^TU$, $H=M+U^TU$, and partition the response Gram matrix
in this basis as
\[
Q^TUM^{-1}U^TQ=
\begin{bmatrix}G_{11}&G_{12}\\G_{21}&G_{22}\end{bmatrix}.
\]
Define the omitted response matrix
\begin{equation}
S_{\rm omit}=G_{22}-G_{21}(I_r+G_{11})^{-1}G_{12}.
\label{eq:omitted-response-schur}
\end{equation}

\begin{proposition}[Arbitrary-selector response endpoint]
\label{prop:arbitrary-endpoint}
The matrix $S_{\rm omit}$ is positive semidefinite, and
\[
\spec(H,P)=\{1\}^{\,n-m+r}
\cup\{1+\mu_i(S_{\rm omit}):i=1,\ldots,m-r\},
\]
where $\mu_i$ are eigenvalues counted with multiplicity. Thus the spectral
condition number is $1+\lambda_{\max}(S_{\rm omit})$ when $r<m$.
At $r=m$ it equals one.
\end{proposition}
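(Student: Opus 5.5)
Whiten by the core, as in the proof of \cref{thm:spectrum}: congruence by $M^{-1/2}$ preserves $\spec(H,P)$. Set $W=UM^{-1/2}$ and $\widetilde W=Q^TW=\begin{bmatrix}W_1\\W_2\end{bmatrix}$ with $W_i=Q_i^TW$. Since $Q$ is orthogonal,
\[
 \widehat H=M^{-1/2}HM^{-1/2}=I_n+W_1^TW_1+W_2^TW_2,\qquad
 \widehat P=M^{-1/2}PM^{-1/2}=I_n+W_1^TW_1 .
\]
The Gram blocks are $G_{ij}=W_iW_j^T$. Thus $\widehat H=\widehat P+W_2^TW_2$ is a rank-at-most-$(m-r)$ PSD perturbation of the SPD matrix $\widehat P$.

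The generalized eigenvalues of $(\widehat H,\widehat P)$ are those of
\[
 \widehat P^{-1}\widehat H=I_n+\widehat P^{-1}W_2^TW_2 .
\]
I will use the standard fact that $XY$ and $YX$ have the same nonzero eigenvalues, taking $X=\widehat P^{-1}W_2^T$ and $Y=W_2$. This reduces the problem to the $(m-r)\times(m-r)$ matrix $W_2\widehat P^{-1}W_2^T$.

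To evaluate it, apply the Woodbury identity, valid since $I_r+G_{11}\succ0$:
\[
 \widehat P^{-1}=I_n-W_1^T(I_r+G_{11})^{-1}W_1 .
\]
This gives
\[
 W_2\widehat P^{-1}W_2^T=G_{22}-G_{21}(I_r+G_{11})^{-1}G_{12}=S_{\rm omit}.
\]
It is also equal to $(\widehat P^{-1/2}W_2^T)^T(\widehat P^{-1/2}W_2^T)\succeq0$, which proves positive semidefiniteness at once.

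The main obstacle is the multiplicity count, since $XY$ and $YX$ only share nonzero eigenvalues. I will handle it by symmetrizing:
\[
 \widehat P^{-1/2}\widehat H\widehat P^{-1/2}=I_n+BB^T,\qquad B=\widehat P^{-1/2}W_2^T\in\R^{n\times(m-r)} .
\]
The eigenvalues of $BB^T$ are the $m-r$ eigenvalues of $B^TB=S_{\rm omit}$, zeros included, together with $n-(m-r)$ additional zeros. This uses $m<n$, so $n-m+r\ge 0$. Hence the spectrum is $\{1\}^{n-m+r}\cup\{1+\mu_i(S_{\rm omit})\}$ with the stated multiplicities.

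Since every $\mu_i\ge0$ and at least $n-m+r\ge1$ eigenvalues equal one, the smallest generalized eigenvalue is one. The condition number is therefore $1+\lambda_{\max}(S_{\rm omit})$ for $r<m$. At $r=m$ the matrix $Q_1$ is square orthogonal, so $P=H$ and the condition number is one; equivalently, $S_{\rm omit}$ is empty.
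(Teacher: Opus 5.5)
Your proof is correct and follows essentially the same route as the paper's. Woodbury identifies the omitted block $VP^{-1}V^T$ (with $V=Q_2^TU$) as $S_{\rm omit}$, which gives positive semidefiniteness as a Gram product, and congruence by $P^{-1/2}$ reduces the pair to $I_n$ plus a Gram product whose eigenvalues are those of $S_{\rm omit}$ padded by $n-m+r$ zeros; your preliminary whitening by $M^{-1/2}$ is only a cosmetic change of coordinates.
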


\begin{proof}
Write $V=Q_2^TU$, so $H=P+V^TV$. The Woodbury identity gives
\[
VP^{-1}V^T
=G_{22}-G_{21}(I_r+G_{11})^{-1}G_{12}
=S_{\rm omit}.
\]
This representation proves positive semidefiniteness. Congruence by
$P^{-1/2}$ transforms the generalized pair into
$I_n+(VP^{-1/2})^T(VP^{-1/2})$ and $I_n$. The two Gram products
$(VP^{-1/2})^T(VP^{-1/2})$ and $VP^{-1}V^T$ have the same nonzero
eigenvalues; padding by $n-m+r$ zeros gives the displayed spectrum.
Because $m<n$, there is at least one unit eigenvalue. The largest is
$1+\lambda_{\max}(S_{\rm omit})$. If $r=m$, $V$ has no rows and $H=P$.
\end{proof}

This is an exact Woodbury--Schur specialization of the rank-structured
preconditioning framework \citep{helmberg2024preconditioned}. It sharpens
the upper estimate based only on the omitted block $G_{22}$ and applies to
raw, metric, or externally supplied spaces. Computing the complete response
Gram matrix still has a cost; the proposition alone does not yield an inexpensive
adaptive estimator. The endpoint concerns the SPD reference $H$, whereas the
complete equation also contains $N$.

\subsection{Spectral improvement and a particular right-hand side}

The spectral statements concern the complete generalized spectrum. A Krylov
residual also depends on the initial right-hand side and the norm in which
it is minimized. These distinctions are central in the analysis of GMRES
\citep{greenbaum1996any,carson2024understanding}. They remain relevant when
the reference matrix is SPD and the correction uses exact eigenvectors.
The following elementary example makes the limitation explicit within
the hypotheses of \cref{thm:spectrum}.

\begin{remark}[A partial correction can worsen the first residual]
\label{rem:partial-cluster}
Let $t>s>1$, let $M=I_3$, and define
\[
U=\begin{bmatrix}\sqrt{t-1}&0&0\\0&\sqrt{s-1}&0\end{bmatrix},
\qquad H=\diag(t,s,1),\qquad b=(1,1,0)^T.
\]
The rank-zero and leading rank-one preconditioners are $P_0=I_3$ and
$P_1=\diag(t,1,1)$. For $j\in\{0,1\}$, define the optimal first-step
relative residual from a zero initial guess by
\[
\rho_j=\min_{\alpha\in\R}
 \frac{\norm{b-\alpha HP_j^{-1}b}_2}{\norm b_2}.
\]
For any nonzero vector $w$, the scalar minimizing $\norm{b-\alpha w}_2$
is $\alpha=b^Tw/(w^Tw)$. Applying this identity to
$HP_0^{-1}b=(t,s,0)^T$ and $HP_1^{-1}b=(1,s,0)^T$ gives
\[
\rho_0=\frac{t-s}{\sqrt{2(t^2+s^2)}},\qquad
\rho_1=\frac{s-1}{\sqrt{2(1+s^2)}}.
\]
At $t=1000$ and $s=990$, the full spectral condition number decreases
from $1000$ to $990$, while $\rho_0\simeq0.00503$ and
$\rho_1\simeq0.70639$. The invariant subspace containing $b$ initially
sees two nearby eigenvalues; the partial correction moves only one of
them to one. Thus the exact spectral improvement in
\cref{thm:spectrum} implies no monotone first-step residual improvement
for a fixed right-hand side.
\end{remark}

The example uses $N=0$ and exact arithmetic. It therefore also distinguishes
RHS-dependent behavior from errors in the core solve or from the terms
outside the SPD reference. The experiment in
\cref{sec:high-contact-qualification} examines these possibilities separately
on an actual contact-bearing equation.

\subsection{Use in the complete Newton equation}
The computation ultimately solves $A=H+N$, which can be indefinite or
nonsymmetric. The spectrum above describes the SPD pair $(H,P_r)$; the full
FGMRES process also depends on $N$. For every reported solve, the
selected right preconditioner is applied to the unchanged $A$.  Let
$x\in\R^n$ be a candidate iterate, and let $\varepsilon_{\mathrm{abs}}$ and
$\varepsilon_{\mathrm{rel}}$ be the absolute and relative residual tolerances.
The iterate is accepted only when
\begin{equation}
 \norm{b-Ax}_2\le
 \max\{\varepsilon_{\mathrm{abs}},
       \varepsilon_{\mathrm{rel}}\norm{b}_2\}
\label{eq:residual-test}
\end{equation}
under an independently evaluated complete operator action. This verifies the
stated normwise residual criterion for the original equation $Ax=b$.
Forward-error, field-of-values, and nonlinear-trajectory conclusions require
their own analyses.

\subsection{Condensed-volumetric scaling and zero contact}
The final algebraic question is how the selector changes when the condensed
volumetric term is strengthened on one frozen equation.  This isolates a matrix
effect at a fixed state; re-equilibration at a new material parameter is a
separate experiment.

\begin{proposition}[Frozen volumetric monotonicity]
Let
\begin{equation}
 M(\alpha)=K_0+\alpha K_{\mathrm{vol}},
 \qquad \alpha\ge0,
\label{eq:Malpha}
\end{equation}
with $K_0$ and $U$ fixed, $K_{\mathrm{vol}}\succeq0$, and
$M(\alpha)\succ0$ on the interval.  If $\alpha_2\ge\alpha_1$, then
\begin{equation}
 UM(\alpha_2)^{-1}U^T
 \preceq UM(\alpha_1)^{-1}U^T.
\label{eq:pressure-monotone}
\end{equation}
Every ordered eigenvalue of the metric Gram matrix is therefore nonincreasing
with $\alpha$.
\end{proposition}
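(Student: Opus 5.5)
The plan is to establish the Loewner monotonicity of the inverse, $M(\alpha_2)^{-1}\preceq M(\alpha_1)^{-1}$, and then transfer it through the congruence by $U$. Since $K_{\mathrm{vol}}\succeq0$ and $\alpha_2-\alpha_1\ge0$, we have
\[
 M(\alpha_2)-M(\alpha_1)=(\alpha_2-\alpha_1)K_{\mathrm{vol}}\succeq0,
\]
so $M(\alpha_1)\preceq M(\alpha_2)$. Both matrices are SPD by hypothesis.

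The key step is the classical fact that matrix inversion is order-reversing on SPD matrices. Write $A_i=M(\alpha_i)$. Congruence by $A_1^{-1/2}$ turns $A_1\preceq A_2$ into
\[
 I_n\preceq A_1^{-1/2}A_2A_1^{-1/2}=:X .
\]
Every eigenvalue of the SPD matrix $X$ is therefore at least one. Hence every eigenvalue of $X^{-1}$ is at most one, which gives $X^{-1}\preceq I_n$. Since $X^{-1}=A_1^{1/2}A_2^{-1}A_1^{1/2}$, congruence back by $A_1^{-1/2}$ yields $A_2^{-1}\preceq A_1^{-1}$.

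Alternatively, one can avoid the square root with a variational characterization for SPD $A$:
\[
 y^TA^{-1}y=\max_x\,(2x^Ty-x^TAx).
\]
Increasing $A$ decreases every value of the objective, so it decreases the maximum. I will use whichever is cleaner; the eigenvalue argument is the more standard one.

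Next, congruence by $U^T$ preserves the Loewner order. For every $\gamma\in\R^m$, setting $y=U^T\gamma$ gives
\[
 \gamma^TUM(\alpha_2)^{-1}U^T\gamma
 =y^TM(\alpha_2)^{-1}y
 \le y^TM(\alpha_1)^{-1}y
 =\gamma^TUM(\alpha_1)^{-1}U^T\gamma .
\]
This proves \eqref{eq:pressure-monotone}. The eigenvalue claim then follows from Weyl's monotonicity theorem, itself a consequence of the Courant--Fischer min--max characterization. For symmetric $X\preceq Y$, $\lambda_k(X)\le\lambda_k(Y)$ for each $k$ in nonincreasing order, because every Rayleigh quotient of $X$ is bounded by the corresponding quotient of $Y$. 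Applying this to the two metric Gram matrices shows that each ordered eigenvalue $\lambda_k$ of $UM(\alpha)^{-1}U^T$ is nonincreasing in $\alpha$.

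The only step with genuine content is the order reversal of the inverse, and it is standard. I expect no real obstacle beyond making sure that the SPD hypothesis on $M(\alpha)$ is used at both endpoints, so that the square roots and inverses are defined.
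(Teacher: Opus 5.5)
Your proposal is correct and follows the paper's proof step for step. The chain is: $M(\alpha_1)\preceq M(\alpha_2)$ from $K_{\mathrm{vol}}\succeq0$, then order reversal under inversion of SPD matrices, then preservation under congruence by $U$, then the min--max principle for the ordered eigenvalues. The only difference is that you also prove the inversion step (by whitening with $M(\alpha_1)^{-1/2}$), which the paper simply cites as a standard fact.
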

\begin{proof}
Positive semidefiniteness of $K_{\mathrm{vol}}$ gives
$M(\alpha_1)\preceq M(\alpha_2)$.  Inversion reverses the Loewner order for
SPD matrices, and congruence by $U$ preserves it, which yields
\eqref{eq:pressure-monotone}.  Monotonicity of each ordered eigenvalue follows
from the symmetric-matrix min--max principle.
\end{proof}

The proposition covers frozen algebraic scaling. A change in bulk modulus or
Poisson ratio also changes the pressure-geometric curvature, and re-equilibration can alter
the state, geometry, and contact factor; those coupled changes lie beyond its
hypotheses.

If $U$ has zero rows, both Gram matrices are empty, the selected rank is zero,
and $P_0=M$. A conforming implementation skips the contact response,
eigensolve, and retained-basis allocation in that case.

\section{Current-equation convergence and repeated reuse}
\label{sec:convergence-reuse}

The reference spectrum describes the interactions left by a correction.
For a maintained space, the current contact may also soften or rotate,
and the complete equation contains terms outside the reference. We connect
these effects to the inner solve and then to a local nonlinear step.
The analysis specializes established balanced-preconditioner spectral
relations \citep[Section~3.1]{tang2009twolevel}, GMRES polynomial bounds
\citep[Section~1 and Theorem~2.1]{liesen2020ideal}, and inexact-Newton
convergence \citep{dembo1982inexact,eisenstat1996forcing}.
Its sufficient conditions have a different role from the measured
complete-residual acceptance criterion.

\subsection{The omitted complement under signed contact changes}

Fix one equation and suppress its index in
\cref{sec:maintained-response}. Let $p$ be the number of supported
coordinates and write the existing restriction as $E_s\in\R^{p\times n}$,
with independent rows. Assume $1\le p<n$.
Let $D_c\in\R^{p\times p}$ be the symmetric supported contact change.
Thus
\[
 H=F_\star+E_s^TD_cE_s,\qquad F_\star\succ0,\qquad H\succ0.
\]
Let $L_p\in\R^{p\times r}$ have full column rank, with $0\le r\le p$,
and set $V=F_\star^{-1}E_s^TL_p$. The balanced inverse
$\mathcal P$ is defined by \eqref{eq:maintained-balanced} using this
reference, current $H$, and response space $V$.
Define the supported reference compliance and the whitened load by
\[
 \Omega=E_sF_\star^{-1}E_s^T,\qquad W_L=\Omega^{1/2}L_p.
\]
The matrix $\Omega$ is SPD. Choose an orthogonal matrix
$O=[Z,Z_\perp]\in\R^{p\times p}$ whose first $r$ columns span $W_L$.
Partition the supported reference in this basis:
\begin{equation}
 O^T(I_p+\Omega^{1/2}D_c\Omega^{1/2})O
 =\begin{bmatrix}K_c&B_c^T\\B_c&T_c\end{bmatrix},
 \qquad
 S_c=T_c-B_cK_c^{-1}B_c^T.
\label{eq:balanced-supported-schur}
\end{equation}
Here $K_c\in\R^{r\times r}$ and $S_c\in\R^{(p-r)\times(p-r)}$.
At $r=0$, $S_c$ is the entire supported matrix; at $r=p$,
the omitted block is empty.

\begin{proposition}[Supported complementary spectrum]
\label{prop:balanced-complement}
Under these hypotheses, $K_c$ and $S_c$ are SPD whenever nonempty, and
\begin{equation}
 \spec(\mathcal P^{1/2}H\mathcal P^{1/2})
 =\{1\}^{\,n-p+r}\cup\spec(S_c).
\label{eq:balanced-complement-spectrum}
\end{equation}
For $r<p$, the smallest and largest eigenvalues are
\[
 a=\min\{1,\lambda_{\min}(S_c)\},\qquad
 b=\max\{1,\lambda_{\max}(S_c)\}.
\]
For $r=p$ they are $a=b=1$.
\end{proposition}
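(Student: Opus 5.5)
Because $\mathcal P\succ0$ by \cref{prop:maintained-balanced}, the spectrum of $\mathcal P^{1/2}H\mathcal P^{1/2}$ is the spectrum of $\mathcal P H$, equivalently of $H\mathcal P$. I would compute this spectrum in the whitened coordinates of the reference. Set $\widehat H=F_\star^{-1/2}HF_\star^{-1/2}$ and $\widehat V=F_\star^{1/2}V$. The balanced inverse is covariant under congruence: $Q=VG^{-1}V^T$ with $G=V^THV=\widehat V^T\widehat H\widehat V$ becomes $F_\star^{1/2}QF_\star^{1/2}=\widehat V(\widehat V^T\widehat H\widehat V)^{-1}\widehat V^T$. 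Hence $F_\star^{1/2}\mathcal PF_\star^{1/2}$ is the standard balanced preconditioner for $\widehat H$ with identity smoother and coarse space $\operatorname{range}\widehat V$. The spectrum of $\mathcal PH$ is therefore that of $\widehat{\mathcal P}\widehat H$.

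The key structural fact comes from the definitions. We have $\widehat H=I_n+J^TD_cJ$ with $J=E_sF_\star^{-1/2}\in\R^{p\times n}$ and $JJ^T=\Omega$. The polar factor $\Phi=J^T\Omega^{-1/2}$ has orthonormal columns, and $\widehat H=I_n+\Phi\,\Omega^{1/2}D_c\Omega^{1/2}\Phi^T$. Moreover $\widehat V=F_\star^{-1/2}E_s^TL_p=J^TL_p=\Phi W_L$, so the coarse space lies inside $\operatorname{range}\Phi$. Thus, in an orthonormal basis $[\Phi Z,\Phi Z_\perp,\Phi_\perp]$ of $\R^n$, $\widehat H$ is block diagonal: the supported block is the matrix $\begin{bmatrix}K_c&B_c^T\\B_c&T_c\end{bmatrix}$ of \eqref{eq:balanced-supported-schur}, and the block on the $(n-p)$-dimensional complement is $I_{n-p}$. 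The supported block equals $O^T\Phi^T\widehat H\Phi O$, which is SPD because $H\succ0$. Hence $K_c$ and its Schur complement $S_c$ are SPD whenever they are nonempty.

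Next I would compute the balanced operator in this basis. The coarse space is $\operatorname{span}(\Phi Z)$, so $\widehat Q=\Phi Z K_c^{-1}Z^T\Phi^T$. Write $\widehat H=\begin{bmatrix}K&B^T\\B&T'\end{bmatrix}$, splitting the coarse coordinates from everything else, where $T'=\operatorname{diag}(T_c,I_{n-p})$. The standard balanced (BNN) identity then gives
\[
 \widehat{\mathcal P}=\begin{bmatrix}K^{-1}&0\\0&0\end{bmatrix}+\begin{bmatrix}-K^{-1}B^T\\I\end{bmatrix}\begin{bmatrix}-BK^{-1}&I\end{bmatrix}.
\]
This follows because $I-\widehat Q\widehat H=\begin{bmatrix}0&-K^{-1}B^T\\0&I\end{bmatrix}$. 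Multiplying out, $\widehat{\mathcal P}\widehat H$ is block upper triangular, with diagonal blocks $I_r$ and $T'-BK^{-1}B^T=\operatorname{diag}(S_c,I_{n-p})$. I will check the off-diagonal cancellation, but it is routine. Counting eigenvalues gives $r$ ones from the coarse block, $n-p$ ones from the complement, and $\spec(S_c)$. This is \eqref{eq:balanced-complement-spectrum}.

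The extreme eigenvalues follow at once. Since $p<n$, the value $1$ is always present, which gives $a=\min\{1,\lambda_{\min}(S_c)\}$ and $b=\max\{1,\lambda_{\max}(S_c)\}$ for $r<p$. When $r=p$ the $S_c$ block is empty and $a=b=1$. The main obstacle is the second step: verifying that $\widehat V=\Phi W_L$ and that the change of basis aligns exactly with the partition in \eqref{eq:balanced-supported-schur}, so that the whitened supported block is precisely $I_p+\Omega^{1/2}D_c\Omega^{1/2}$. I would check this first through the identity $\Phi^T\widehat H\Phi=I_p+\Omega^{1/2}D_c\Omega^{1/2}$, which uses $\Phi^TJ^T=\Omega^{1/2}$.
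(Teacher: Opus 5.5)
Your proposal is correct and follows essentially the paper's own route: the same whitening by $F_\star^{-1/2}$, the same isometry $\Phi=F_\star^{-1/2}E_s^T\Omega^{-1/2}$ (the paper's $X$) with basis $\Phi O$, and the same block expansion of the balanced inverse in terms of $K_c$ and $B_c$. The only difference is in the last step, and it is cosmetic: you read the spectrum off the block upper triangular product $\widehat{\mathcal P}\widehat H$, whose lower-left block $-BK^{-1}K+B$ vanishes, whereas the paper inverts the expanded balanced block and compares its factorization with that of the supported reference through a common unit lower triangular factor, with middle factors $\diag(K_c,I_{p-r})$ versus $\diag(K_c,S_c)$.
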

\begin{proof}
The matrix $X=F_\star^{-1/2}E_s^T\Omega^{-1/2}$ satisfies $X^TX=I_p$.
It identifies the supported part of $F_\star^{-1/2}HF_\star^{-1/2}$;
the orthogonal complement of $X$ is an identity block.
In the basis $XO$, the whitened response space is the first block.
Expanding the balanced inverse on this space gives
\[
 \begin{bmatrix}
 K_c^{-1}+K_c^{-1}B_c^TB_cK_c^{-1}&-K_c^{-1}B_c^T\\
 -B_cK_c^{-1}&I_{p-r}
 \end{bmatrix}.
\]
Its inverse is
\[
 \begin{bmatrix}
 K_c&B_c^T\\
 B_c&I_{p-r}+B_cK_c^{-1}B_c^T
 \end{bmatrix}.
\]
This matrix and the supported reference in
\eqref{eq:balanced-supported-schur} have the same unit lower triangular
block factor
$\left[\begin{smallmatrix}I_r&0\\B_cK_c^{-1}&I_{p-r}\end{smallmatrix}\right]$.
Their middle diagonal factors are respectively
$\diag(K_c,I_{p-r})$ and $\diag(K_c,S_c)$.
Their generalized eigenvalues are therefore $r$ ones and the
eigenvalues of $S_c$. The unsupported complement contributes $n-p$
further ones. Positive definiteness follows from the SPD supported
reference and its Schur complement. This proves the spectrum and endpoints.
The empty-block cases follow by omitting the corresponding factors.
\end{proof}

The coupling term $B_cK_c^{-1}B_c^T$ matters even when the retained
coarse matrix is well conditioned. For example, take $F_\star=I_3$,
let $E_s$ restrict the first two coordinates, and retain $L_p=(1,0)^T$.
With initial contact $0.99I_2$ and current contact
$0.99\left[\begin{smallmatrix}1&1\\1&1\end{smallmatrix}\right]$,
both contacts are PSD, while their difference is signed.
The supported $H$ is
$\left[\begin{smallmatrix}1&0.99\\0.99&1\end{smallmatrix}\right]$.
Thus $K_c=1$, $S_c=1-0.99^2=0.0199$, and the condition number in
\eqref{eq:balanced-complement-spectrum} is about $50.25$.
Coarse positivity alone does not yield a uniform omitted-mode bound.

The proposition requires only small supported matrices once $\Omega$
is available. Acquiring $\Omega$ generally requires $p$ reference
responses; the $r$ retained responses do not provide it automatically.
It is an analysis quantity, with acquisition cost included when measured.
Empty contact support recovers $\mathcal P=F_\star^{-1}$ directly.

\subsection{From the reference to the complete linear equation}

Let $A=H+\mathcal R$ be the current complete matrix, and let
$\widetilde{\mathcal P}=\mathcal P+\mathcal E$ be a fixed linear
inverse action during one inner solve. The term $\mathcal E$ can
represent fixed approximate response or coarse data. Define
\begin{equation}
 \begin{aligned}
 \delta_{\mathcal R}
 &=\norm{\mathcal P^{1/2}\mathcal R\mathcal P^{1/2}}_2,\\
 \delta_{\mathcal E}
 &=\norm{\mathcal P^{1/2}A\mathcal E\mathcal P^{-1/2}}_2,
 \qquad \delta=\delta_{\mathcal R}+\delta_{\mathcal E}.
 \end{aligned}
\label{eq:complete-convergence-defects}
\end{equation}
These norms concern every direction, including the omitted space.
Equation~\eqref{eq:maintained-defect} describes the defect only on the
represented current image.
For a right-hand side $b_{\mathrm{lin}}\in\R^n$, let $x_\ell$ be the
linear iterate after $\ell$ columns and define its residual by
$r_\ell=b_{\mathrm{lin}}-Ax_\ell$. Assume $r_0\ne0$; a zero
initial residual requires no iteration.

\begin{proposition}[A sufficient current-equation convergence bound]
\label{prop:complete-gmres-bound}
Let $a,b$ be the endpoints in \cref{prop:balanced-complement}.
Suppose $\delta<a$, and define
\[
 \alpha=a-\delta,\qquad \beta=b+\delta,\qquad
 q=\sqrt{1-(\alpha/\beta)^2},\qquad
 \gamma=\sqrt{\kappa_2(\mathcal P)}.
\]
In exact arithmetic, after $\ell$ unrestarted GMRES columns with right
action $\widetilde{\mathcal P}$, the true residual satisfies
\begin{equation}
 \frac{\norm{r_\ell}_2}{\norm{r_0}_2}
 \le \min\{1,\gamma q^\ell\}.
\label{eq:complete-gmres-bound}
\end{equation}
For restart length $m$, the same argument gives the per-cycle factor
$\min\{1,\gamma q^m\}$. In particular, $\gamma q^m<1$ implies
geometric reduction over restart cycles.
\end{proposition}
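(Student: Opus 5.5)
We work in the symmetrically scaled coordinates of the preconditioner. Since right preconditioning by $\widetilde{\mathcal P}$ produces residuals $r_\ell=p_\ell(A\widetilde{\mathcal P})r_0$ with $p_\ell$ a polynomial of degree at most $\ell$ satisfying $p_\ell(0)=1$, GMRES optimality gives
\[
\norm{r_\ell}_2\le\min_{p_\ell}\norm{p_\ell(A\widetilde{\mathcal P})}_2\norm{r_0}_2.
\]
Set $\mathcal B=\mathcal P^{1/2}A\widetilde{\mathcal P}\mathcal P^{-1/2}$. Then $A\widetilde{\mathcal P}=\mathcal P^{-1/2}\mathcal B\mathcal P^{1/2}$, so
\[
\norm{p(A\widetilde{\mathcal P})}_2\le\norm{\mathcal P^{-1/2}}_2\norm{\mathcal P^{1/2}}_2\norm{p(\mathcal B)}_2=\gamma\norm{p(\mathcal B)}_2,
\]
with $\gamma=\sqrt{\kappa_2(\mathcal P)}$ because $\mathcal P\succ0$ by \cref{prop:maintained-balanced}. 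The bound $1$ in the minimum is immediate from $p\equiv1$ (monotonicity of GMRES residuals).

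Next we decompose $\mathcal B$. Writing $A=H+\mathcal R$ and $\widetilde{\mathcal P}=\mathcal P+\mathcal E$ gives
\[
\mathcal B=\mathcal P^{1/2}H\mathcal P^{1/2}+\mathcal P^{1/2}\mathcal R\mathcal P^{1/2}+\mathcal P^{1/2}A\mathcal E\mathcal P^{-1/2}=:\mathcal S+\mathcal F.
\]
Here $\mathcal S$ is SPD with spectrum in $[a,b]$ by \cref{prop:balanced-complement}, and $\norm{\mathcal F}_2\le\delta$ by \eqref{eq:complete-convergence-defects}. For any unit $z$, the symmetric part satisfies $z^T\mathcal Bz\ge a-\delta=\alpha>0$, and also $\norm{\mathcal Bz}_2\le b+\delta=\beta$. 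So $\mathcal B$ has field of values bounded away from zero with real-part bound $\alpha$, and norm at most $\beta$.

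The main step is the Elman-type bound. Take the degree-one polynomial $p(t)=1-\tau t$ and estimate
\[
\norm{(I-\tau\mathcal B)z}_2^2=1-2\tau z^T\mathcal Bz+\tau^2\norm{\mathcal Bz}_2^2\le1-2\tau\alpha+\tau^2\beta^2.
\]
Choosing $\tau=\alpha/\beta^2$ gives $\norm{I-\tau\mathcal B}_2\le\sqrt{1-\alpha^2/\beta^2}=q$. Taking $p_\ell=(1-\tau t)^\ell$ then yields $\norm{p_\ell(\mathcal B)}_2\le q^\ell$, and hence \eqref{eq:complete-gmres-bound}. We expect this step to be the obstacle only in checking that the symmetric-part lower bound uses $\mathcal S\succeq aI$ together with $|z^T\mathcal Fz|\le\delta$, which also justifies the hypothesis $\delta<a$.

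For restarts of length $m$, we apply the same argument inside each cycle, with that cycle's initial residual as $r_0$; the operator $\widetilde{\mathcal P}$ is fixed. Each cycle therefore contracts by a factor of at most $\min\{1,\gamma q^m\}$, and chaining the cycles gives geometric reduction whenever $\gamma q^m<1$. The case $r=p$ (where $a=b=1$) and the empty-support case are covered by the same computation.
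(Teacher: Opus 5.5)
Your proposal is correct and follows the paper's proof essentially step for step. It uses the same scaled operator $\mathcal B=\mathcal P^{1/2}A\widetilde{\mathcal P}\mathcal P^{-1/2}$, split into the SPD reference plus perturbations of total norm at most $\delta$, the same Elman-type contraction $\|I-\omega\mathcal B\|_2\le q$ with $\omega=\alpha/\beta^2$, the polynomial $(1-\omega z)^\ell$ carried back through the similarity at the cost $\gamma=\sqrt{\kappa_2(\mathcal P)}$, and the same cycle-by-cycle restart argument.
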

\begin{proof}
Set $\mathcal B=\mathcal P^{1/2}A\widetilde{\mathcal P}\mathcal P^{-1/2}$.
It equals the SPD reference $\mathcal P^{1/2}H\mathcal P^{1/2}$
plus the two perturbations in \eqref{eq:complete-convergence-defects}.
Hence its symmetric part is at least $\alpha I_n$ and
$\norm{\mathcal B}_2\le\beta$. For $\omega=\alpha/\beta^2$,
expansion of the squared norm gives, for every $v\in\R^n$,
\[
 \norm{(I_n-\omega\mathcal B)v}_2^2
 \le (1-2\omega\alpha+\omega^2\beta^2)\norm v_2^2
 =q^2\norm v_2^2.
\]
The polynomial $(1-\omega z)^\ell$ has value one at zero.
Similarity between $\mathcal B$ and $A\widetilde{\mathcal P}$
therefore bounds its actual-norm residual by $\gamma q^\ell$.
GMRES minimizes over a space containing that polynomial candidate
and cannot increase the initial residual. This proves
\eqref{eq:complete-gmres-bound}. Apply the same estimate to the
initial residual of each restart cycle to obtain the final assertion.
\end{proof}

The factor $\gamma$ retains the change from an energy-scaled norm to
the Euclidean norm used by the implementation. It cannot generally be
discarded at restarts. A flexible recurrence with a fixed linear action
has the same exact-arithmetic search space; genuinely varying inner
actions and finite-precision Arnoldi errors require additional analysis.
The maintained numerical method updates its correction between equations.
The reference bounds are sufficient conditions, and may be conservative.
For instance, $A=\diag(1,1,-1)$ with $H=\mathcal P=I_3$ fails
$\delta<a$, although the polynomial $1-z^2$ annihilates $A$.
An unavailable bound therefore gives no conclusion about failure to solve.

\subsection{Inheritance by a damped nonlinear step}

Let $f:\mathcal U\subset\R^n\to\R^n$ be one fixed mechanical
residual on an open neighborhood of a root $x_\star$.
Write $\mathcal J(x)=f'(x)$ for its derivative, and assume that $\mathcal J$
is Lipschitz continuous and $\mathcal J(x_\star)$ is nonsingular.
The residual may represent static force balance or one implicit
time-step equation with fixed committed history.
At an iterate $x$, let $A$ be the assembled tangent, $s$ the computed
direction, and $\theta\in(0,1]$ the accepted step fraction. Define
\[
 e=As+f(x),\qquad d=(\mathcal J(x)-A)s.
\]
Here $e$ is the inner linear error and $d$ is the directional tangent
defect. When $f(x)\ne0$, let
$\eta=\norm e_2/\norm{f(x)}_2$ and
$\zeta=\norm d_2/\norm{f(x)}_2$.
Subscript $k$ will index the accepted nonlinear iterations.

\begin{proposition}[Damped local convergence with tangent error]
\label{prop:damped-newton-inheritance}
If $L_J$ bounds the Lipschitz constant of $\mathcal J$ along the candidate
segment, then
\begin{equation}
 \norm{f(x+\theta s)}_2
 \le [1-\theta+\theta(\eta+\zeta)]\norm{f(x)}_2
       +\frac{L_J}{2}\theta^2\norm s_2^2.
\label{eq:damped-newton-envelope}
\end{equation}
Suppose the accepted iterations satisfy
$\eta+\zeta\le\overline\eta<1$ and
$\theta\ge\theta_{\min}>0$ uniformly near $x_\star$.
For a sufficiently close initial state, they converge locally to
$x_\star$ with geometric residual reduction.
Sufficient additional conditions for superlinear reduction are
$1-\theta_k+\eta_k+\zeta_k\to0$; reduction is quadratic if this
quantity is $O(\norm{f(x_k)}_2)$.
\end{proposition}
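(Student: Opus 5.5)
The plan has three parts: a Taylor-type estimate along the segment for \eqref{eq:damped-newton-envelope}, a standard inexact-Newton argument for the local convergence claim, and a reuse of the envelope for the rate claims.

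\textbf{The envelope.} Write
\[
f(x+\theta s)=f(x)+\theta\mathcal J(x)s+\int_0^1\bigl(\mathcal J(x+t\theta s)-\mathcal J(x)\bigr)\theta s\,dt.
\]
The Lipschitz bound along the segment controls the integral by $\tfrac{L_J}{2}\theta^2\norm s_2^2$. For the linear part, use $\mathcal J(x)s=As+d=e-f(x)+d$. This gives
\[
f(x)+\theta\mathcal J(x)s=(1-\theta)f(x)+\theta(e+d).
\]
Taking norms, and using $\norm e_2=\eta\norm{f(x)}_2$ and $\norm d_2=\zeta\norm{f(x)}_2$ together with $1-\theta\ge0$, yields \eqref{eq:damped-newton-envelope} directly.

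\textbf{Local convergence.} I would bound $\norm s_2$ by a multiple of $\norm{f(x)}_2$ near $x_\star$. Nonsingularity and Lipschitz continuity give $\norm{\mathcal J(x)^{-1}}_2\le2\norm{\mathcal J(x_\star)^{-1}}_2=:\mu$ on a ball around $x_\star$. Then
\[
s=\mathcal J(x)^{-1}(e+d-f(x)),
\]
so $\norm s_2\le\mu(1+\eta+\zeta)\norm{f(x)}_2\le2\mu\norm{f(x)}_2$. Substituting into the envelope gives
\[
\norm{f(x+\theta s)}_2\le\bigl[1-\theta(1-\overline\eta)+2L_J\mu^2\norm{f(x)}_2\bigr]\norm{f(x)}_2.
\]
Shrinking the ball so that $2L_J\mu^2\norm{f(x)}_2\le\tfrac12\theta_{\min}(1-\overline\eta)$ gives the factor $\rho=1-\tfrac12\theta_{\min}(1-\overline\eta)<1$.

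The main obstacle is keeping the iterates in the ball where these constants hold; this is the usual Dembo--Eisenstat--Steihaug argument. Near the root, $\norm{x-x_\star}_2$ and $\norm{f(x)}_2$ are equivalent, with constants from $\mathcal J(x_\star)$ and $\mathcal J(x_\star)^{-1}$. Each step moves the iterate by at most $\theta\norm{s_k}_2\le2\mu\norm{f(x_k)}_2$. Summing the geometric series bounds the total displacement by $2\mu\norm{f(x_0)}_2/(1-\rho)$. Choosing $x_0$ close enough keeps every iterate, and every candidate segment, inside the ball. Geometric decay of $\norm{f(x_k)}_2$ then yields a Cauchy sequence whose limit is a root in the ball, and that root is $x_\star$ by local uniqueness from nonsingularity.

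\textbf{Rates.} Use the same bound in its unsimplified form. With $\theta_k\le1$,
\[
\frac{\norm{f(x_{k+1})}_2}{\norm{f(x_k)}_2}\le(1-\theta_k+\eta_k+\zeta_k)+2L_J\mu^2\norm{f(x_k)}_2.
\]
If $1-\theta_k+\eta_k+\zeta_k\to0$, the right-hand side tends to zero, which is superlinear reduction. If this quantity is $O(\norm{f(x_k)}_2)$, then $\norm{f(x_{k+1})}_2=O(\norm{f(x_k)}_2^2)$, which is quadratic reduction.
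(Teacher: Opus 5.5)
Your proposal is correct and follows the paper's proof essentially step for step. Both use the Taylor split with linear part $(1-\theta)f(x)+\theta(e+d)$, the step bound from a local bound on $\mathcal J(x)^{-1}$, the contraction factor $1-\tfrac12\theta_{\min}(1-\overline\eta)$ (the paper's $(1+q_0)/2$), the geometric-sum containment of the iterates, and division of the envelope by the current residual for the rate claims.
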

\begin{proof}
Since $f(x)+\mathcal J(x)s=e+d$, the linear part of the Taylor expansion is
$(1-\theta)f(x)+\theta(e+d)$.
The integral remainder has norm at most
$L_J\theta^2\norm s_2^2/2$, proving
\eqref{eq:damped-newton-envelope}.
Choose a root neighborhood with $\norm{\mathcal J(x)^{-1}}_2\le c_J$.
Then
$\norm s_2\le c_J(1+\overline\eta)\norm{f(x)}_2$.
The coefficient of the first-order residual is at most
$q_0=1-\theta_{\min}(1-\overline\eta)<1$.
In a smaller neighborhood the quadratic term is at most
$(1-q_0)\norm{f(x)}_2/2$, giving a contraction factor
$(1+q_0)/2<1$.
Choose the initial residual small enough that the resulting geometric
sum of step lengths stays inside the original neighborhood.
The iterates then remain there, have a limit with zero residual, and
converge to its unique local root. Dividing
\eqref{eq:damped-newton-envelope} by the current residual proves
the superlinear assertion under its additional condition; the
$O(\norm{f(x_k)}_2)$ condition gives the quadratic assertion.
\end{proof}

The conditions for the faster rates are sufficient, and the separate
norms can overestimate cancellation between $e$ and $d$.
A fixed allowed inner tolerance does not by itself imply quadratic
convergence. Persistent damping also remains visible: for the scalar
residual $f(x)=x+x^2/2$, an exact Newton direction with fixed
$\theta=0.9$ has residual ratio tending to $0.1$.
Projected tangents and lagged models require their actual $d$ to be
accounted for. If the residual model changes after the trial,
the difference between the new and old residual at the candidate is
an additional term in \eqref{eq:damped-newton-envelope}.
The proposition does not assert finite contact-model identification
or success within a prescribed finite backtracking budget.

\subsection{Uniform reuse over an episode family}

Let an index $\nu$ label the equations visited by a family of load
histories or episodes. Suppose they share $F_\star$ and the retained
load space, and let their constants in
\cref{prop:complete-gmres-bound} satisfy
$q_\nu\le\overline q<1$ and $\gamma_\nu\le\overline\gamma<\infty$.
For a common restart length $m$ with
$\overline\gamma\,\overline q^m<1$, the proposition gives the same
geometric per-cycle bound for every $\nu$.
Uniform bounds on the derivative Lipschitz constants, on
$\|\mathcal J(x)^{-1}\|_2$, and on $\eta+\zeta<1$, together with a
common positive lower step fraction and common admissible root-neighborhood
radius in \cref{prop:damped-newton-inheritance}, likewise give a common
local nonlinear contraction for sufficiently close initial states.
These statements follow by substituting the uniform constants
in the pointwise bounds.

A compatible reset restores physical and friction histories while
allowing the reference factor and response data to remain resident.
It can return successive episodes to a common useful operator region.
No episode count appears in the bounds, although the region must
contain the states actually visited by different policies.
Changed mass, time step, boundary constraint sets or geometry require
their own compatibility assessment. A bounded common potential
support is also needed for uniform cache storage.
Reusable reference spaces across parameter families have direct
precedents \citep{kadeethum2026online}.

These estimates explain which properties must remain controlled for
reuse to stay effective. The full-residual checks in the experiments
verify the solved equations, though they do not cover every global norm
or uniform smoothness hypothesis above. The time benefit is assessed
with preparation, updates, resets and solve work included.
Convergence of a learned policy does not follow.

\subsection{Theory, implementation and observed scope}
\label{sec:theory-evidence-map}

\Cref{tab:theory-evidence-map} collects the assumptions used by the
preceding results and the distinct checks performed by the implementation.
It separates a conditional convergence explanation from a residual
observation. Measured residuals and unmeasured global norms, positive-definiteness
hypotheses, and local smoothness assumptions occupy different rows.

\begin{table}[htbp]
\centering\small
\caption{Applicability of the reference and convergence results.
The physical observations establish the checks in the right column
within their reported scope.}
\label{tab:theory-evidence-map}
\begin{tabularx}{\textwidth}{@{}>{\raggedright\arraybackslash}p{0.23\textwidth}
 >{\raggedright\arraybackslash}X >{\raggedright\arraybackslash}X@{}}
\toprule
Result or construction & Mathematical requirements & Numerical scope\\
\midrule
Woodbury inverse action
& Nonsingular core and reduced system; no general SPD requirement
(\cref{eq:woodbury}).
& GPU LU core action and Cholesky of a symmetrized reduced matrix
are checked separately.\\
Metric spectrum and optimality
& A fixed symmetric positive-definite core and exact actions
(\cref{thm:spectrum,prop:selector-optimality}).
& Primary global-core SPD was not recorded. Its work counts remain
direct observations; the spectral certificate is conditional.\\
Balanced current images and complement
& SPD reference and current contact-updated reference; full-rank
retained loads (\cref{prop:maintained-balanced,prop:balanced-complement}).
& Current supported images are updated. The retained responses alone
do not measure the omitted complement.\\
Complete-equation inner convergence
& Full-space drift and inverse-action bounds, with a fixed action
during each inner solve (\cref{prop:complete-gmres-bound}).
& Complete residuals are independently recomputed. They do not
establish all full-space perturbation bounds.\\
Damped local Newton and uniform reuse
& A fixed smooth residual near a nonsingular root, controlled tangent
error and damping; uniform bounds for an episode family.
& Numerical convergence and reset histories are observed. Uniform smoothness,
tangent-error bounds and a positive lower step fraction are not
established for every visited physical state.\\
\bottomrule
\end{tabularx}
\end{table}

\section{Numerical evidence}
\label{sec:protocol}
\label{sec:results}

The evidence asks whether the chosen interactions help the same core,
whether compression retains that benefit, and whether maintaining the
responses preserves useful complete-work economics.
Fixed-equation comparisons isolate augmentation and compression.
Changing-equation comparisons test maintenance of numerical preparation. Larger-contact and
frictional cases then delimit the same construction.

\subsection{Physical problems and comparison contract}

The primary problem is a pneumatic finger contacting a deformable cup at
$150$~Pa. Its $325{,}260$ free displacement coordinates come from $66{,}315$
quadratic tetrahedra: $58{,}633$ condensed Tet10/P0 and $7{,}682$ plain Tet10.
The three material groups have shear/bulk moduli, in MPa, of
$(0.0334,3.3333)$, $(0.6712,33.3333)$ and $(10.7143,50.0)$ on
$26{,}613$, $7{,}682$ and $32{,}020$ elements.
The finger base and minimum-$y$ cup support are fixed; nine closed chamber
surfaces carry follower pressure.
The contact proxy has $64$ triangles per six-node quadratic triangular (TRI6) boundary face
and distance $0.01$~mm.
The primary frozen factor has $94$ rows, $2479$ nonzeros and $60$ supported
free coordinates from $36$ normal pairs.
It is frictionless. Its RHS norm $0.5032158$ defines a sampled linear
equation, independently of equilibrium.

\begin{figure}[H]
\centering
\includegraphics[width=\linewidth]{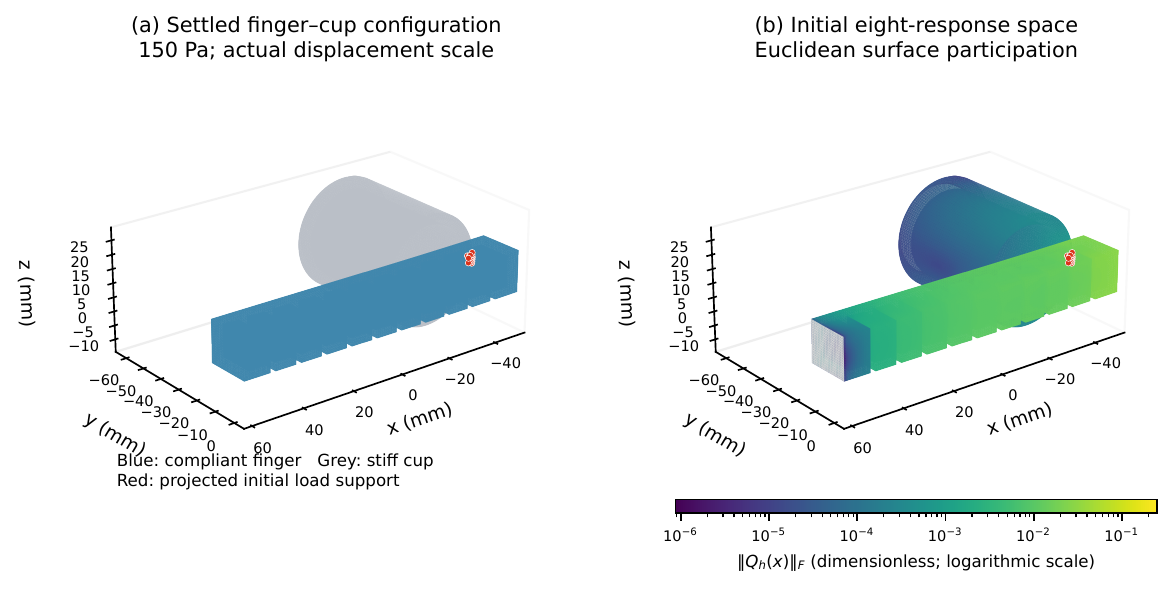}
\caption{The same finger/cup model in a separately converged $150$-Pa state,
with actual displacement scale. Blue and grey identify the compliant finger
and stiff cup. Red markers project the $21$ initial load-support nodes above
the surfaces, including occluded nodes. For this figure, let $Q$ be a
Euclidean-orthonormal basis of the initial eight-response space and let
$Q_h(x)\in\R^{3\times8}$ interpolate its nodal vectors at a surface point.
Panel (b) shows $\|Q_h(x)\|_F$, which is invariant under an orthogonal
basis change. The logarithmic scale covers all positive display samples;
zero values are grey. This is Euclidean participation, not an energy fraction
or mesh-independent indicator.
Four display facets approximate each quadratic boundary face.}
\label{fig:retained-response-scene}
\end{figure}

The larger four-finger/cup problem has $865{,}572$ stored and $856{,}206$
free coordinates. Its controls are $750$~Pa, radial closure $1.25$~mm,
zero vertical command, friction coefficient $\mu_f=0.3$ and regularization
length $\varepsilon_f=10^{-3}$~mm.
The separately staged high-contact equation and difficult late equation
have different captured states; their exact dimensions and protocols
accompany the supporting comparisons.
The primary core is $K_0+K_{\mathrm{vol}}$.
The larger complete trajectories instead use the current complete
noncontact core $\overline M$, including its pressure and noncontact
curvature, with normal and lagged-friction contact in the root.

All comparisons use float64 arithmetic on a Tesla V100S-PCIE-32GB with
cuDSS 0.8.0 and one CPU core per process.
The GPU right-FGMRES recurrence uses two modified Gram--Schmidt
passes and independently recomputes the complete residual.
CPU-controlled observations are identified separately.
For a nonzero RHS $b$, the relative linear residual is
$\rho(x)=\|b-Ax\|_2/\|b\|_2$.
A forcing value $\eta$ permits $\rho(x)\le\eta$.
Numerical convergence of the nonlinear solve requires an absolute residual
at most $10^{-6}$ or a relative residual at most $10^{-8}$, together with
validity and friction consistency.

Warm times for the fixed-equation GPU solves include terminal solution transfer and use the
stated within-process medians. Complete-load clocks charge model initialization, factorization, acquisition, updates, every attempted target,
resets, writing results and releasing solver resources; process startup before that clock is excluded.
The complete-load prototype returns its direction to CPU memory.
A solve stopped at its iteration limit remains an incomplete observation.
These timing scopes and full-residual checks are retained when comparisons
move from a fixed equation to a nonlinear history.

\subsection{What the selected interactions contribute}
\label{sec:fixed-core-benefit}
\label{sec:e1}
\label{sec:e2}
\label{sec:resident-core-comparison}

The extracted primary components reconstruct the complete free operator
to relative Frobenius discrepancy $6.53\times10^{-17}$. The contact root
reconstructs its projected tangent to below $5\times10^{-13}$.
At retained dimension eight, metric and raw selection both generate five
Krylov columns; core-only generates eleven. Dimension-matched row-norm and
deterministic-random selections require twelve. Thus the content of the
subspace matters in this comparison.

\begin{table}[H]
\centering\small
\caption{Matched GPU actions on the primary equation. Each entry is
a warm median of five subsequent solves. The repeated order tests use
one shared factor; the full-contact row comes from the compatible GPU
dimension sweep.}
\label{tab:resident-core-comparison}
\begin{tabular}{@{}lrrr@{}}
\toprule
Method & Retained dimension & Krylov columns & Warm median (ms)\\
\midrule
Core only, first & 0 & 11 & 81.062\\
Raw, first & 8 & 5 & 37.538\\
Metric, first & 8 & 5 & 37.571\\
Metric, reverse order & 8 & 5 & 37.506\\
Raw, reverse order & 8 & 5 & 37.535\\
Core only, reverse order & 0 & 11 & 80.044\\
Full metric correction & 94 & 5 & 39.644\\
\bottomrule
\end{tabular}
\end{table}

The large reduction from approximately $80$ to $37.5$~ms measures contact
augmentation. Full contact also takes five columns. Compression supplies
a further approximately $5.4\%$ warm reduction in the GPU sweep.
The dimension-eight and full response arrays occupy $20.817$ and
$244.596$~MB, respectively; those sizes exclude other GPU allocations.

Common core and complete-operator setup cost $22.349$ and $18.493$~s.
Metric selection costs approximately $0.108$~s; raw selection costs
$0.0379$~s on first use and $0.00176$~s after warm-up. These costs matter
whenever a correction is rebuilt. The largest independently recomputed
relative residuals are $8.860\times10^{-10}$ for core-only and
$4.291\times10^{-10}$ for rank eight.

The first omitted core-response value falls from $3.071$ at dimension six
to $0.07746$ at eight and $0.00572$ at twelve. Conditional on $M\succ0$,
the dimension-eight reference condition number is approximately $1.07746$,
and the core-scaled inverse-error norm is approximately $0.07189$.
The complete dimension sweep is given in \cref{tab:rank}.
The metric and raw spaces have principal angles from
$3.47\times10^{-4}$ to $0.232$ radians. Their close alignment is consistent
with the common work count; unique practical necessity of the metric selector
is unestablished by this case.

\begin{figure}[H]
\centering
\begin{tikzpicture}
\begin{axis}[
 at={(0,0)},anchor=south west,width=0.48\linewidth,height=5.6cm,
 xmin=0,xmax=24,ymin=4,ymax=12,
 xtick={0,4,8,12,16,24},ytick={5,6,8,10,11},
 xlabel={Retained dimension $r$},ylabel={Krylov columns},
 title={(a) Krylov work plateaus at eight},
 label style={font=\small},tick label style={font=\small},
 title style={font=\small},grid=major,grid style={gray!15}]
\addplot[blue!65!black,thick,mark=*] coordinates {
 (0,11) (2,10) (4,8) (6,6) (8,5) (12,5) (16,5) (24,5)};
\addplot[gray,densely dashed] coordinates {(0,5) (24,5)};
\node[anchor=north east,align=right,font=\scriptsize] at (rel axis cs:0.96,0.94)
 {Full contact: 94 rows\\also five columns};
\end{axis}
\begin{axis}[
 at={(7.6cm,0)},anchor=south west,width=0.48\linewidth,height=5.6cm,
 xmin=0,xmax=24,ymode=log,
 xtick={2,4,8,12,16,24},xlabel={Retained dimension $r$},
 ylabel={First omitted $\lambda$},
 title={(b) Conditional reference quantity},
 label style={font=\small},tick label style={font=\small},
 title style={font=\small},grid=major,grid style={gray!15}]
\addplot[orange!75!black,thick,mark=square*] coordinates {
 (2,559.2) (4,11.44) (6,3.071) (8,0.07746)
 (12,0.005716) (16,0.001426) (24,0.000003429)};
\end{axis}
\end{tikzpicture}
\caption{Interaction truncation and observed work on the primary frozen
equation. The left panel shows the complete-residual Krylov work; the
full stored-row endpoint also takes five columns. The right panel shows
the first omitted core-response value. Its spectral interpretation
requires the SPD hypotheses of \cref{thm:spectrum}; the plotted
association supplies no universal rank rule or iteration prediction.
\Cref{tab:rank} retains the complete values and residuals.}
\label{fig:interaction-rank-evidence}
\end{figure}

\subsection{From selected interactions to maintained responses}
\label{sec:method-legend}
\label{sec:maintained-economics}

The maintained comparisons use the initial \emph{raw} rank-eight space.
Their reference is either the initial material core or the complete initial
matrix. A held complete factor supplies the same reuse opportunity without
a contact correction. The full-cache alternative retains all supported
coordinate responses and updates the complete supported contact action.
\Cref{tab:method-legend} links these experiment names to the construction.

\begin{table}[H]
\centering\small
\caption{The repeated-solve variants. Every iterative row solves the full
current equation. A reference solve is one application of the stored factor;
response storage excludes factors, sparse operators and Krylov storage.}
\label{tab:method-legend}
\begin{tabularx}{\textwidth}{@{}lXX@{}}
\toprule
Name & Preparation and contact treatment & Inverse action\\
\midrule
Per-step direct & Refactor the current complete matrix; reuse symbolic
analysis where permitted. & Direct solve.\\
Held complete & Initial complete factor; no current contact correction.
& One reference solve.\\
Material, balanced eight & Initial raw eight responses through $M_0$;
update current images relative to zero stored contact.
& One reference solve and the balanced correction.\\
Complete, balanced eight & Initial raw eight responses through $A_0$;
update the signed difference from its initial contact.
& One reference solve and the balanced correction.\\
Complete, cached full & All supported-coordinate responses through $A_0$;
update the full supported contact system.
& One reference solve and a full-cache correction.\\
Implicit full contact & Supported compliance and reduced factor;
omit the full displacement-response cache.
& Two reference solves; a separate realization.\\
\bottomrule
\end{tabularx}
\end{table}

The primary full factor endpoint has $94$ rows, whereas a supported-coordinate
cache uses $60$ columns. They are different full-contact representations.
The changing-equation sequence has its own $61$-coordinate support.
Frozen ordinary correction holds both its space and weights; refreshed raw
correction reacquires them; balanced maintenance retains responses and updates
their current images.

\FloatBarrier
\subsection{What must change when contact changes}
\label{sec:maintained-results}

The first five saved Newton states keep their contact matrices inside the
initial $61$-coordinate support, while the relative Frobenius change from
initial contact reaches $0.9071$.
We compare the initial equation, the largest-change endpoint and the last
equation at true relative residual $10^{-9}$.
Material-reference methods retain $M_0$; complete-reference methods retain
$A_0$ and update its signed contact difference.
All compressed rows use the initial raw rank-eight loads.
Each equation also receives a fixed-seed full-free-coordinate RHS
perturbation, with first use and five subsequent solves.

\begin{table}[H]
\centering\small
\caption{Columns and warm milliseconds on the three current equations,
with their actual RHSs at relative residual tolerance $10^{-9}$. The complete-reference
methods share the same initial complete factor.}
\label{tab:maintained-columns}
\begin{tabular}{@{}lrrr@{}}
\toprule
Reference and correction & Initial & Largest change & Last\\
\midrule
Material, balanced eight & 5 / 37.80 & 5 / 37.98 & 7 / 52.20\\
Complete, balanced eight & 1 / 9.04 & 4 / 29.07 & 5 / 35.80\\
Complete, cached full & 1 / 9.11 & 3 / 22.57 & 4 / 29.32\\
Held complete factor & 1 / 9.04 & 12 / 82.58 & 16 / 110.25\\
\bottomrule
\end{tabular}
\end{table}

The material-reference control isolates the update mechanism.
Frozen ordinary raw eight takes $5/13/20$ columns; refreshed raw and
maintained balanced eight both take $5/5/7$.
The two later balanced updates cost $0.612/0.633$~ms, against
$8.406/8.121$~ms for raw refresh.
Thus the existing responses remain useful when their current images are
updated. Full support also permits inexpensive updates, and is retained
as the comparator.

Complete anchoring further lowers work, although full contact remains faster
on the later equations.
The perturbed RHSs give $1/5/6$ columns for complete balanced eight,
$1/4/5$ for complete cached full, $1/12/17$ for held complete, and
$8/8/8$ for the material reference.
Raw response spaces formed through the two references differ by principal
angles up to $0.0732$ radians; the ideal metric equivalence in
\cref{lem:complete-anchor-metric} has narrower hypotheses.
All $432$ iterative and $36$ direct observations complete, with an
independent check of $78$ first vectors.

Reducing response error alone does not predict a benefit.
At two current cores, old-response defects $0.01937$ and $0.04198$ can be
reduced to about $2\times10^{-11}$ by renewal, yet old and renewed spaces
take the same five and six columns when their actual current images are used.
The construction costs and space-change checks are retained in
\cref{app:reset-diagnostics}.
This observation complements the error identity \eqref{eq:maintained-defect}.

\FloatBarrier
\subsection{Complete repeated quasi-static load studies}
\label{sec:complete-prepared-loads}

Five methods execute the three histories
$(150,165,180)$, $(145,130,160)$ and $(140,155,170)$~Pa on the frictionless
primary problem, restoring the same complete initial state before each
history. One solver instance retains eligible preparation across histories;
every target requires a new solve.
The methods run in two method orders with fresh processes.
All iterative methods use relative linear residual tolerance $10^{-9}$ and the same
contact geometry, pressure potential, collision detection, energy Armijo
and nonlinear stopping criteria.

Every method converges for the same eight of nine targets in both orders.
The held and complete-anchor methods use $58$ Newton iterations in those
targets and one attempt in the unconverged target.
Let $\mathbf r^{\,c}_{ab}\in\R^3$ be the normal-contact residual assembled
on body $a$ from contact with body $b$.
The physical reaction is $\mathbf f_{a\leftarrow b}=-\mathbf r^{\,c}_{ab}$.
We compare Euclidean norms of the concatenated reaction vectors over
all ordered body pairs. Complete balanced eight differs from paired direct by at most
$5.862\times10^{-12}$ in relative displacement and
$3.856\times10^{-10}$ in relative normal reaction; contact-identity checks
and pair counts agree.
These are accepted normal reactions, with zero friction at the stated
coefficient. Full nodal Dirichlet reactions are outside that observation.

\begin{table}[H]
\centering\small
\setlength{\tabcolsep}{4pt}
\caption{Ranges across the forward/reverse method orders. Warm time contains
the seven converged targets after first use. Complete run time includes
model and solver preparation, all nine attempts, reset, output and
cleanup. Response arrays exclude other GPU allocations.}
\label{tab:complete-prepared-loads}
\begin{tabular}{@{}lrrrr@{}}
\toprule
Method & Columns & Warm (s) & Complete (s) & Responses (MB)\\
\midrule
Per-step direct & --- & 107.27--107.30 & 338.20--339.37 & ---\\
Held complete & 1531 & 79.23--79.88 & 323.04--324.09 & ---\\
Material, balanced eight & 671--682 & 75.62--77.77 & 335.11--336.87 & 21.19\\
Complete, balanced eight & 591 & 75.87--76.35 & 320.32--322.11 & 21.19\\
Complete, cached full & 583 & 75.86--76.16 & 320.81--321.65 & 158.90\\
\bottomrule
\end{tabular}
\end{table}

Within each order, complete balanced eight saves $28.82$--$29.30\%$ of
warm converged time against per-step direct and $4.24$--$4.42\%$ against
held complete. The complete run savings are $5.09$--$5.29\%$ and
$0.61$--$0.84\%$, respectively.
Compressed and full-cache time orderings reverse between method orders; they
are effectively tied at this scale, with a $7.5$ response-storage ratio.
Direct performs $59$ numerical factorizations with one symbolic analysis;
each iterative method retains one factor.

Preparation matters to the anchor comparison.
The material-core preparation costs about $34.3$~s and the complete-factor
preparation about $19.8$~s. Their common factor-construction times from assembled CPU matrices are much closer,
so this difference is attributed to the preparation procedures.
The shared-machine runs give paired observations and descriptive ranges,
with no reliable population confidence interval for sub-percent gains.

All methods stop at the $130$-to-$160$~Pa reversal during construction of a
new contact-history reference. The returned state has not converged.
The reverse material-reference run also takes one additional iteration
after a near-zero Armijo margin changes sign.
\Cref{app:reset-diagnostics} retains these geometric and arithmetic
observations. They qualify completion and repeatability; they do not
change the successful complete-equation linear residuals.

\FloatBarrier
\subsection{Reuse across further reset episodes}
\label{sec:reset-episode-results}

The extended program has twelve episodes of four pressure targets.
Its first target is $150$~Pa; other starts are uniform in $[130,170]$~Pa,
increments are uniform in $[-20,20]$~Pa, and pressures stay in
$[120,190]$~Pa. The fixed table, generated with seed $2026090811$ before
execution, is shared by all methods.
An unconverged target ends its history; independent later resets remain eligible.
The physical state is restored while the numerical factor and response
vectors are retained.

Seven runs return $45$ targets and converge for the same $43$.
The reverse compressed run terminates during the energy-based Armijo line search at
its second target, after one converged target.
Its shortened time is excluded from equal-output savings.

\begin{table}[H]
\centering\small
\setlength{\tabcolsep}{4pt}
\caption{Forward/reverse reset-program observations. Complete time includes
model and solver preparation, unsuccessful work, resets, writing results and releasing solver resources.
Warm time sums converged targets after first use. The starred run
terminates exceptionally at its second target and yields no warm-time
or whole-program speedup. Each complete run converges for the same $43$
targets and returns two unconverged targets.}
\label{tab:reset-episodes}
\begin{tabular}{@{}lrrr@{}}
\toprule
Method & Complete (s) & Warm (s) & Converged\\
\midrule
Per-step direct & $883.36/882.59$ & $633.49/633.14$ & $43/43$\\
Held complete & $737.59/736.54$ & $473.11/471.83$ & $43/43$\\
Complete, balanced eight & $716.91/254.77^*$ & $450.14/\text{---}$ & $43/1^*$\\
Complete, cached full & $724.47/721.32$ & $454.11/452.91$ & $43/43$\\
\bottomrule
\end{tabular}
\end{table}

The complete forward compressed run saves $18.84\%$ of total
time and $28.94\%$ of warm converged time against direct; the corresponding
held-factor reductions are $2.80\%$ and $4.86\%$.
Full cache completes both orders with $17.99$--$18.27\%$ lower total time
than direct.
Each iterative run retains one factor. Compression acquires eight
responses with no reacquisition; full cache acquires sixty.
All $85$ archived resets reproduce the initial physical-state bytes.

\begin{figure}[H]
\centering
\begin{tikzpicture}
\begin{axis}[
  at={(0,0)},anchor=south west,
  width=0.68\linewidth,height=6.5cm,
  ybar stacked,bar width=20pt,ymin=0,ymax=1000,
  xmin=-0.65,xmax=3.65,xtick={0,1,2,3},
  xticklabels={Current\\direct,Held\\factor,Balanced\\eight,Cached\\full},
  xticklabel style={align=center,font=\small},
  ylabel={Elapsed time (s)},label style={font=\small},
  tick label style={font=\small},
  title={(a) Forward program: 43 converged targets},
  title style={font=\small},
  ymajorgrids=true,grid style={gray!18},
  legend style={at={(0.5,-0.24)},anchor=north,font=\scriptsize,
    legend columns=1,draw=none,fill=none}]
\addplot[draw=blue!65!black,fill=blue!40] coordinates {
  (0,633.49) (1,473.11) (2,450.14) (3,454.11)};
\addlegendentry{Warm converged work}
\addplot[draw=gray!65,fill=gray!30] coordinates {
  (0,249.87) (1,264.48) (2,266.77) (3,270.36)};
\addlegendentry{Other included work}
\node[above,font=\scriptsize] at (axis cs:0,883.36) {883.36};
\node[above,font=\scriptsize] at (axis cs:1,737.59) {737.59};
\node[above,font=\scriptsize] at (axis cs:2,716.91) {716.91};
\node[above,font=\scriptsize] at (axis cs:3,724.47) {724.47};
\end{axis}
\begin{axis}[
  at={(10.3cm,0)},anchor=south west,
  width=0.30\linewidth,height=6.5cm,
  ybar,bar width=20pt,ymin=0,ymax=9,
  xmin=-0.65,xmax=1.65,xtick={0,1},
  xticklabels={Balanced\\eight,Cached\\full},
  xticklabel style={align=center,font=\small},
  ylabel={Relative response storage},label style={font=\small},
  ytick={0,2,4,6,8},tick label style={font=\small},
  title={(b) Response storage only},title style={font=\small},
  ymajorgrids=true,grid style={gray!18}]
\addplot[draw=blue!65!black,fill=blue!40] coordinates {(0,1) (1,7.5)};
\node[above,font=\small] at (axis cs:0,1) {1};
\node[above,font=\small] at (axis cs:1,7.5) {7.5};
\end{axis}
\end{tikzpicture}
\caption{Time and response storage in the extended reset program.
(a) The four completed forward runs converge for the same 43 targets;
bar totals reproduce \cref{tab:reset-episodes}. The stack separates warm
converged work from all other included work. (b) Response storage normalized
to the compressed cache. The reverse compressed run stops after
one converged target and is excluded from this equal-output comparison;
that termination remains recorded in \cref{tab:reset-episodes}. These shared-machine
observations give no population timing confidence interval or total-VRAM
reduction.}
\label{fig:reset-program-economics}
\end{figure}

The longer study has a wider accuracy envelope than the short paired studies:
the largest forward compressed/direct relative displacement and normal-reaction
differences are $1.770\times10^{-6}$ and $2.322\times10^{-5}$.
Direct repeats exhibit essentially the same endpoint variation.
The recorded terminal step fractions explain that variation at the stopping
scale. A separate $60$-digit evaluation of a captured state changes the sign
of a cup-energy decrement, identifying constitutive cancellation.
The original failed-trial vectors were not saved, so that calculation does
not reconstruct the exception.
The full diagnostic values in \cref{app:reset-diagnostics} preserve this
distinction. All reported timings use the original formula; the failed
repeat remains a nonlinear robustness limitation.

\subsection{Dependence on contact, loading and nonlinear history}
\label{sec:first-contact-comparison}
\label{sec:controls}
\label{sec:nonlinear-consequences}

The retained rank is problem dependent.
The first-contact equation has $856{,}206$ free coordinates, eight normal
pairs and a $24$-row normal-contact factor; friction remains in the
full-equation remainder. Eight raw or metric directions take
fifteen columns and about $296$~ms against eighteen to nineteen columns and
about $354$~ms for core-only. The corresponding core/operator setup costs
$60.167/50.744$~s.
On the primary equation at forcing $10^{-3}$, metric eight takes one column
against six for core-only; at forcing $0.1$, both take one.
\Cref{sec:forcing-observations} gives the full accuracy comparison.
Its direction differences also show why a residual tolerance alone is not a
forward-error or nonlinear-progress guarantee.

A separately staged high-contact equation gives a direct adverse case.
At forcing $0.1$, core-only takes three columns, while raw and metric rank
eight reach the $256$-column limit at residuals $0.188$ and $0.244$.
Rank $128$ and full contact both take one column.
The independent action checks in \cref{sec:high-contact-qualification}
reproduce the regression; \cref{rem:partial-cluster} explains how a partial
spectral correction can worsen the first residual even in exact SPD algebra.

On the difficult late equation, current-core raw and metric rank $128$
take $77$ and $76$ columns, while full supported contact takes one.
The metric first-omitted value is about four times smaller than the raw
omission, with almost unchanged work.
The corresponding implicit full-contact representation stores small
supported data and pays two core applications; it is not the full displacement
cache used in the primary maintained study.
\Cref{sec:late-contact-representation} retains the matched construction costs
and representation sizes.

Correction strength is a separate control on a chosen space.
The positive spectral family and supported first-direction optimization in
\cref{app:rhs-strength} preserve a mathematically specified inverse family.
On one late-state trial, the selected direction lowers the returned residual
from $168.672$ to $48.407$, against $70.814$ for core-only.
Its small first residual does not ensure a good full solve: held-out
prepared loads reach the $480$-column limit, and at forcing $10^{-3}$ the
selected direction has a relative displacement difference $0.998$ from direct.
The complete queries and one-step comparisons are in
\cref{sec:prepared-load-coverage,sec:short-direction-consequence}.
This strength variant is not used in the principal raw-eight maintained
experiments.

Complete frictional histories reinforce the acquisition-cost limitation.
The selected-strength method uses $122$ solve columns against core-only's
$5231$, but acquires $14431$ setup responses and takes $1886.882$~s against
$1331.118$~s. Full supported contact takes $4032.666$~s; tighter ordinary
raw correction converges in $1318.959$~s, and direct in $901.397$~s.
\Cref{tab:trajectory-summary} retains all completed and iteration limit outcomes.
A good first direction and a small solve-phase count therefore do not
establish economical complete execution.

The final friction forces also depend on the contact-entry history.
The complete comparison in \cref{sec:history-consequence} reproduces the
frozen force map and isolates differing per-pair reference coordinates.
Strict final residuals do not identify one common friction history.
Together with the shared geometric-query failures and exceptional compressed
repeat above, these observations bound the physical and numerical regime
of the reported savings. No dynamic, learner-throughput or general
Krylov-recycling performance claim is inferred.

The frozen volumetric scaling control preserves eleven core-only versus five
rank-eight columns for scale factors $0.25$ through $4$; its largest positive
departure from predicted monotonicity is $2.62\times10^{-9}$.
It tests \cref{eq:pressure-monotone} on fixed algebra; re-equilibrated
material behaviour is a different question. Empty contact recovers the core-only action.

\section{Discussion}
\label{sec:discussion}

The construction connects a mechanical definition of interaction relevance
to a small retained response space and its current-equation use.
The SPD reference results explain what the metric truncation omits and why
that choice is optimal for its stated worst-energy criterion.
The raw proxy tests how much of this useful space can be acquired cheaply.
Supported image updates then preserve the selected responses' current action
without reacquiring them at every equation.

The experiments separate the value of augmentation, compression and reuse.
On the primary equation, the largest work reduction comes from adding useful
contact information; compression retains full-contact work with lower action
cost and response storage. On changing equations, current images matter more
than simply retaining the initial correction weights.
The complete anchored studies establish a useful warm benefit and a smaller
preparation-inclusive benefit against direct refactorization.
Held-factor and full-cache comparisons locate the additional contribution:
the correction improves on held-factor reuse, while matching full-cache time
with much less response storage in the demonstrated regime.

The proofs also identify what must remain controlled. Reference positivity,
the omitted complement, full-space operator drift and inverse-action error
govern the linear estimates. Tangent consistency, smoothness and accepted
step fractions enter the local nonlinear result.
The physical experiments establish their actual residuals and numerical convergence;
they do not establish every uniform hypothesis.
A fixed support alone guarantees neither space quality nor nonlinear
robustness. The adverse rank, held-out and friction-history observations
are consequences to understand within that scope.

\section{Conclusion}

TOGEARI selects contact interactions through their mechanical responses,
constructs a compressed inverse correction and maintains it through supported
signed contact changes while solving the complete Newton equation.
Its reference and current-equation analysis links truncation, retained images,
operator drift and local nonlinear convergence under explicit hypotheses.
The cheaper raw selector and matched comparisons make the acquisition and
maintenance costs part of the same method.

The demonstrated regime combines useful warm and complete-work savings with
near-full-cache time and $13.3\%$ of its response storage.
Large-contact, held-out and nonlinear-history results delimit that regime.
The analysis and comparisons explain when compressed contact responses
remain useful through a complete nonlinear calculation, and what their
construction and maintenance cost in the demonstrated regimes.

\clearpage
\section*{CRediT author statement}
Yanlin Liu originated the central theoretical and algorithmic ideas and
mathematical formulations of TOGEARI and led the theoretical development.

\noindent\textbf{Yanlin Liu:} Conceptualization (central theoretical and
algorithmic ideas); Methodology (mathematical formulation and algorithm design);
Formal analysis (theoretical analysis and derivations);
Software (research-specific development, framework extensions and performance
optimization); Investigation; Validation; Writing---original draft;
Writing---review and editing.

\noindent\textbf{Chao Huang:} Software (development of the foundational
simulation framework); Conceptualization (introduction of the underlying
numerical problem and the near-incompressibility regime).

\noindent\textbf{Kaixiang Yao:} Conceptualization (identification of the
warm-execution-time acceleration requirement in repeated robotics simulations);
Writing---review and editing.

\noindent\textbf{Yao Shen:} Supervision; Resources (provision of computational
resources); Conceptualization (broad problem introduction and research context).

\section*{Data and code availability}
Retained numerical observations, solver configurations, and controlled
analysis inputs are available from the authors upon reasonable request.
The private simulation platform and its production solvers are not publicly
distributed. A public release of selected analysis and reproduction
materials may accompany a subsequent version.

\section*{Declaration of generative AI use}
OpenAI Codex was used to assist with experimental coding, preliminary
mathematical review, and preliminary drafting. Anthropic Claude was
used for draft reviewing and editing. Yanlin Liu provided the central
theoretical ideas and initial mathematical formulations.
Responsibility for the claims, citations, computational results,
and final manuscript rests with the named authors.

\clearpage
\appendix
\setcounter{table}{0}
\renewcommand{\thetable}{\Alph{section}.\arabic{table}}
\renewcommand{\theHtable}{appendix.\Alph{section}.\arabic{table}}

\section{Complete retained-dimension sweep}
\label{app:rank-sweep}

\begin{table}[H]
\centering\small
\caption{Complete metric retained-dimension sweep on the primary equation.
The entries use the fixed-equation reference implementation. The final
column is the first omitted core-response value, whose condition-number
interpretation is conditional on the SPD hypothesis.}
\label{tab:rank}
\begin{tabular}{@{}rrrr@{}}
\toprule
Retained dimension $r$ & Krylov columns & Relative residual & First omitted $\lambda$\\
\midrule
0 & 11 & $8.861\times10^{-10}$ & ---\\
2 & 10 & $1.260\times10^{-10}$ & $5.592\times10^2$\\
4 & 8 & $2.262\times10^{-10}$ & $1.144\times10^1$\\
6 & 6 & $6.411\times10^{-10}$ & $3.071$\\
8 & 5 & $2.902\times10^{-10}$ & $7.746\times10^{-2}$\\
12 & 5 & $2.853\times10^{-10}$ & $5.716\times10^{-3}$\\
16 & 5 & $2.105\times10^{-10}$ & $1.426\times10^{-3}$\\
24 & 5 & $2.067\times10^{-10}$ & $3.429\times10^{-6}$\\
94 & 5 & $2.067\times10^{-10}$ & $0$\\
\bottomrule
\end{tabular}
\end{table}

\section{Fixed-equation sensitivity}

The following comparisons hold their original operator and RHS fixed.
They supply accuracy- and rank-dependent observations; they do not
represent additional nonlinear histories.

\subsection{Dependence on the relative forcing level}
\label{sec:forcing-observations}

The two frozen equations use $\rho(x)=\|b-Ax\|_2/\|b\|_2\le\eta$,
zero absolute tolerance and
$\eta\in\{0.1,0.03,0.01,10^{-3},10^{-6},10^{-9}\}$.
Core-only, raw eight, metric eight and full metric correction share the
GPU recurrence, complete equation, precision and core factor.
Each order and its reverse use a first solve and three warm solves,
giving $48$ configurations and $192$ solves per equation.
All report successful termination and meet the independent complete-residual
criterion. The same numerical factorization is reused.

\begin{table}[H]
\centering
\small
\caption{Forcing-level comparison on two fixed complete equations.
Each time range spans the two reversed-order warm medians;
each median uses three solves and includes terminal solution
transfer. These ranges are within-process observations.}
\label{tab:forcing-levels}
\begin{tabular}{@{}lcrrrr@{}}
\toprule
Equation & $\eta$ & Core columns & Metric-8 columns
& Core (ms) & Metric-8 (ms)\\
\midrule
Primary & $0.1$ & 1 & 1 & 9.064--9.782 & 9.151--9.179\\
& $0.03$ & 3 & 1 & 23.030--24.748 & 9.136--9.152\\
& $0.01$ & 3 & 1 & 23.068--24.287 & 9.157--9.158\\
& $10^{-3}$ & 6 & 1 & 44.210--44.235 & 9.157--9.184\\
& $10^{-6}$ & 10 & 2 & 72.780--72.784 & 16.193--16.202\\
& $10^{-9}$ & 11 & 5 & 80.019--80.164 & 37.511--37.523\\
\midrule
First contact & $0.1$ & 6 & 4 & 117.785--117.989 & 80.291--80.439\\
& $0.03$ & 7 & 5 & 137.044--137.088 & 99.388--99.484\\
& $0.01$ & 7 & 5 & 136.817--137.135 & 99.497--99.576\\
& $10^{-3}$ & 8 & 5 & 156.207--156.245 & 99.469--99.483\\
& $10^{-6}$ & 12 & 9 & 234.267--234.310 & 176.917--177.028\\
& $10^{-9}$ & 18--19 & 15 & 354.101--375.637 & 296.125--296.271\\
\bottomrule
\end{tabular}
\end{table}

One strict first-contact core solve uses nineteen columns; the others use
eighteen. Raw eight and full correction match the metric-eight counts
at every level.
At $\eta=10^{-3}$, full-correction warm medians are
$9.605$--$9.638$~ms and $100.700$--$100.754$~ms on the two equations.
The main gain measures augmentation; compression adds a smaller saving.
Core/operator setup costs $22.359/18.117$~s on the primary equation and
$60.198/50.730$~s at first contact.
Metric selector setup is $0.109/0.0803$~s; warm raw setup is
$0.00171/0.00160$~s. Rebuilding a metric correction can exceed a single
warm solve saving.

Let $x_{\rm ref}$ be the first core-only solution at $\eta=10^{-9}$,
whose relative residuals are $8.847\times10^{-10}$ and $4.812\times10^{-10}$.
Define $d(x)=\|x-x_{\rm ref}\|_2/\|x_{\rm ref}\|_2$ as a numerical
direction comparison, without a certified forward-error interpretation.
On the primary equation at $\eta=0.1,0.03,10^{-3}$, core-only gives
$d\simeq1.1225,0.2808,0.6737$, while metric eight gives about $0.0753$.
At $10^{-6}$ the values are $4.42\times10^{-4}$ and $1.48\times10^{-2}$.
At first contact with $\eta=0.1$, they are $0.0825$ and $0.0982$.

For an exact solution $x_*$, $x-x_*=A^{-1}(Ax-b)$.
The residual therefore requires conditioning information to bound direction
error, and sampled direction differences need not decrease monotonically.
The nonlinear consequences are measured separately under the same physical
trial rules.

\subsection{A high-contact retained-dimension comparison}
\label{sec:high-contact-qualification}

This frozen equation comes from a separately staged history of the
four-finger/cup problem. It has $865{,}572$ stored coordinates, including
$856{,}206$ free coordinates, at pressure $750$~Pa and radial closure
$1.25$~mm. Friction coefficient $0.3$ and regularization length
$10^{-3}$~mm are retained. This state is used only for its fixed-equation
comparison; it is separate from the complete histories of
\cref{sec:trajectory-summary}. Write $n_{\rm e}=865{,}572$ for the
embedded dimension, with prescribed identity rows included.

Let $C$ denote the sum of the normal and lagged-friction contact
tangents on that equation. The captured local roots have $5320$ rows and
reconstruct $C$ with relative Frobenius discrepancy
$4.73166\times10^{-13}$. The original assembled $C$ is supported on
$p=2091$ coordinates. Let $E\in\R^{p\times n_{\rm e}}$ restrict a full vector to
these coordinates and define $C_s=ECE^T$. Then $C=E^TC_sE$.
The reference core $M$ is assembled from the same finite-element material assembly
as the full matrix $A$. The relative asymmetries of $A$ and $M$ are
$1.71447\times10^{-16}$ and $1.72354\times10^{-16}$.
cuDSS in SPD mode reports successful factorization and solves
the original $A$ and $M$ for the actual RHS to relative residuals
$1.55405\times10^{-13}$ and $4.32870\times10^{-11}$.
These are numerical factorization checks; the exact SPD
theorems retain their mathematical hypotheses.

An explicit dense transpose of the captured contact root would occupy
$36.84$~GB. The support representation avoids that allocation. Define the
support compliance $W_s=EM^{-1}E^T$. Eight-column factor applications form
$W_s$ in $262$ blocks, including five padded zero columns. Its relative
asymmetry is $2.86447\times10^{-15}$. Define the symmetric supported
contact array $\widehat C_s=(C_s+C_s^T)/2$. Raw selection uses its leading
eigenvectors. For weighted selection, write
$W_s=L_sL_s^T$ after numerical symmetrization and let
$\theta_i$ and $v_i$ be the descending eigenvalues and orthonormal
eigenvectors of $L_s^T\widehat C_sL_s$. The retained supported root has rows
$\sqrt{\theta_i}\,v_i^TL_s^{-1}$. The equivalence follows from the
two Gram products in a singular-value decomposition \citep{stewart1990matrix}. In exact arithmetic,
assume $M\succ0$ and $C_s\succeq0$, and set $T=C_s^{1/2}$.
The interaction Gram matrix for the root $TE$ is
$(TL_s)(TL_s)^T$, while its right Gram matrix is $L_s^TC_sL_s$.
For a positive eigenvalue $\theta_i$, the corresponding normalized
left vector is $q_i=TL_sv_i/\sqrt{\theta_i}$. The eigenvector equation
then gives $q_i^TT=\sqrt{\theta_i}\,v_i^TL_s^{-1}$, which is the
retained row stated above. The symmetrization of $C_s$ changes
it by relative Frobenius norm $9.66548\times10^{-17}$; every Krylov action
continues to use the original complete $A$.

For a supported retained root $R\in\R^{r\times p}$, cache
$X=M^{-1}E^TR^T$ and apply the Woodbury formula to
$P=M+E^TR^TRE$. This uses one core solve per action. The full-contact
control avoids caching all $p$ response columns. For an input vector $v$,
it computes
\[
q=M^{-1}v,\qquad
h=(I_p+\widehat C_sW_s)^{-1}\widehat C_sEq,\qquad
z=q-M^{-1}E^Th.
\]
For exact $W_s=EM^{-1}E^T$, the identity
$(M+E^T\widehat C_sE)^{-1}v=z$ follows from the Woodbury formula whenever
the displayed inverses exist. The numerical control pays for two core applications.
This economical compliance baseline is consistent with established
contact-space constructions \citep{zeng2022contact}.

\begin{table}[H]
\centering
\caption{Fixed-equation comparison at iteration $30$ and relative forcing $0.1$.
All methods use the same original equation, restart width $40$, and
$256$-column limit. The two rank-eight rows reach that iteration limit.
The final column counts core applications during Krylov iteration.}
\label{tab:high-contact-ranks}
\begin{tabular}{lrrr}
\toprule
Correction & Columns & True relative residual & Core applications\\
\midrule
Core only & 3 & $8.49406\times10^{-2}$ & 3\\
Raw 8 & 256 & $1.88026\times10^{-1}$ & 256\\
Raw 32 & 29 & $5.85872\times10^{-2}$ & 29\\
Raw 128 & 1 & $4.16824\times10^{-2}$ & 1\\
Weighted 8 & 256 & $2.44281\times10^{-1}$ & 256\\
Weighted 32 & 29 & $5.47537\times10^{-2}$ & 29\\
Weighted 128 & 1 & $9.10877\times10^{-2}$ & 1\\
Full supported (implicit) & 1 & $4.61582\times10^{-4}$ & 2\\
\bottomrule
\end{tabular}
\end{table}

Independent action checks reproduce the adverse rank-eight behavior.
The GPU-assembled response loads agree exactly with their independently constructed CPU counterparts; the largest
original-$M$ response residual is $2.55980\times10^{-9}$ and
$\|Pz-b\|_2/\|b\|_2\le2.41181\times10^{-10}$.
Raw eight has first residual $0.9683166$ under both $A$ and $M+C$,
against core-only's $0.1941799$; weighted eight behaves similarly.
Direct factorization of the same raw-eight preconditioner agrees with
Woodbury to relative direction difference $1.70707\times10^{-9}$ and first
residual difference $2.18\times10^{-13}$.
Removing omitted pressure terms leaves the initial regression essentially
unchanged. \Cref{rem:partial-cluster} supplies a compatible first-step
mechanism; restart and Euclidean geometry also enter the full history.

The implicit full-contact action has defining-equation residuals
$2.31135\times10^{-6}$ for the actual RHS and $8.54661\times10^{-6}$ for
a deterministic sine vector. They are below the tested forcing level but
matter at tighter accuracy.
This comparison uses CPU sparse matrix products and Krylov bases and GPU V100
core solves. Core preparation and factorization costs $42.35$~s and compliance
$30.84$~s. Cached ranks $8,32,128$ occupy $55.4,221.6,886.3$~MB;
the implicit full-contact control has no full-dimensional response cache.
The complete GPU trajectories are a separate comparison in
\cref{sec:trajectory-summary}.

\section{Core inverse regimes beyond the exact construction}
\label{app:core-regimes}
Positive definiteness is the certification hypothesis for the metric theory.
The right-preconditioning action has weaker algebraic requirements.  For a
fixed nonsingular $M$ and retained row matrix $R_r$, \eqref{eq:woodbury} is
well-defined whenever $S_r=I_r+R_rM^{-1}R_r^T$ is nonsingular.  The metric theorem
assumes the stronger conditions $M=M^T\succ0$ and an exact action $M^{-1}$.  A
finite-precision direct factorization approximates this action; the exact
theorem describes the corresponding ideal preconditioner, and the effect of
solve error requires a separate perturbation analysis.

For a fixed approximate SPD core $\widetilde M=\widetilde M^T\succ0$ used
consistently in selection, response construction, and preconditioner
application, define
\begin{equation}
 \widetilde G=U\widetilde M^{-1}U^T,
 \quad C_r=U^T\Pi_rU,
 \quad E_r=U^T(I_m-\Pi_r)U.
\label{eq:approximate-split}
\end{equation}
Here $\Pi_r\in\R^{m\times m}$ is the rank-$r$ orthogonal factor-space
projector used by the approximate construction.
Then $C_r,E_r\succeq0$, $U^TU=C_r+E_r$, and $H=M+C_r+E_r$. Suppose
\begin{equation}
 0<c_1\le c_2,
 \quad c_1\widetilde M\preceq M\preceq c_2\widetilde M,
 \quad E_r\preceq\delta\widetilde M,
 \quad \delta\ge0.
\label{eq:equiv}
\end{equation}
For $\widetilde P_r=\widetilde M+C_r$,
\begin{equation}
 \min\{c_1,1\}\widetilde P_r
 \preceq H\preceq
 \max\{c_2+\delta,1\}\widetilde P_r,
\end{equation}
and therefore
\begin{equation}
 \kappa_2(\widetilde P_r^{-1/2}H\widetilde P_r^{-1/2})
 \le
 \frac{\max\{c_2+\delta,1\}}{\min\{c_1,1\}}.
\label{eq:inexact-bound}
\end{equation}
To see the lower inequality, use
$M+C_r+E_r\succeq c_1\widetilde M+C_r$ and the nonnegativity of
$\widetilde M$ and $C_r$. The upper inequality follows from
$M+C_r+E_r\preceq(c_2+\delta)\widetilde M+C_r$ in the same way.
Congruence by $\widetilde P_r^{-1/2}$ bounds every eigenvalue between the
two displayed constants and gives \eqref{eq:inexact-bound}.
This conditional bound concerns the SPD pair.  The exact spectrum in
\cref{thm:spectrum} and the complete matrix $A$ require their respective
analyses.

For a fixed symmetric indefinite $M$, the matrix $G_M$ remains symmetric and
has signed response values.  The descending ordering used here is empirical;
the SPD energy and optimality interpretation belongs to the positive-definite
regime.  A nonsymmetric $M$ or effective inverse action generally produces a
nonsymmetric $G_M$, placing core-response selection outside the present
analysis.  A variable or tolerance-varying inner solver defines a distinct
preconditioning action.  Flexible GMRES can use these actions, with the
complete-equation residual providing the acceptance test.

\section{Supporting contact and nonlinear comparisons}
\label{app:additional-comparisons}

These comparisons qualify the selected-space and maintenance results.
The difficult late equation, the separate high-contact equation in
\cref{sec:high-contact-qualification}, and the complete nonlinear histories
are distinct observations. The correction-strength construction is proved
in \cref{app:rhs-strength}; it is an additional control on a selected space.

\subsection{The difficult late equation}
\label{sec:late-contact-representation}

This captured equation has $865{,}572$ stored coordinates, a $9913$-row
contact root and $2272$ supported coordinates.
Write its matrix, RHS and contact tangent as $A_\star,b_\star,C_\star$,
and its current and initial complete noncontact cores as
$\overline M_\star,\overline M_0$.
The contact reconstruction discrepancy is $8.64036\times10^{-14}$;
the free-coordinate discrepancy from the symmetric current-core/contact
reference is $8.40596\times10^{-14}$.

The first three rows of \cref{tab:late-contact-representation} share a
current-core factor and right-FGMRES on CPU with two orthogonalization passes,
forcing $10^{-5}$, restart $80$ and a $480$-column limit.
The initial-core row uses the same equation and recurrence in a separate
execution. Full support uses \eqref{eq:implicit-full-contact}.

\begin{table}[H]
\centering\small
\caption{Contact representations on the difficult late equation.
Solve-phase core applications exclude response acquisition.}
\label{tab:late-contact-representation}
\begin{tabular}{@{}llrrr@{}}
\toprule
Core & Correction & Columns & Core applications & True relative residual\\
\midrule
Current & Raw 128 & 77 & 77 & $9.57235\times10^{-6}$\\
Current & Core-conditioned 128 & 76 & 76 & $9.56394\times10^{-6}$\\
Current & Full supported & 1 & 2 & $1.40633\times10^{-8}$\\
Initial & Full supported & 10 & 20 & $7.87056\times10^{-6}$\\
\bottomrule
\end{tabular}
\end{table}

The metric first-omitted value is $2900.992586$, against weighted raw
omission $11862.089566$. Their fourfold difference produces almost the
same work. The raw omission also has a small negative eigenvalue estimate
$-2.29104\times10^{-9}$.
A compatible core-only run reaches the limit at residual
$1.84126\times10^{-4}$, so the selected contact content still supplies a
substantial augmentation benefit.

Current-core setup costs $34.002$~s.
Full compliance requires $2272$ responses and $32.510$~s;
metric selection adds $1.792$~s.
Raw/metric retained responses cost $3.502/3.378$~s for $128$ RHSs,
followed by $52.619/51.853$~s solves with Krylov iteration on CPU.
Full contact has $0.443$~s reduced setup and a $1.252$~s solve.
The initial-core full route costs $34.266$~s for its core,
$36.657$~s for support and $5.354$~s for the solve.
These fixed-equation costs do not establish cross-Newton amortization.
Each cached rank-$128$ response matrix occupies $886.346$~MB; the
two-solve full action retains $82.601$~MB of supported data.

Established spectral normalization changes the weight of the same metric
space. At forcing $10^{-5}$, ordinary/normalized/full corrections take
$76/79/2$ columns. Their prefixes reach $10^{-3}$ after $25/3/1$ columns,
with normalized and ordinary first residuals $0.00306340$ and $0.831655$.
Prefix times are unrecorded. This motivates the direction-specific control
below while preserving the adverse strict-solve ordering.

\FloatBarrier
\subsection{Prepared queries and held-out load coverage}
\label{sec:prepared-load-coverage}

Eight loads share one fixed physical equation: its original RHS, five
perturbations in the retained load span, one supported perturbation
orthogonal to that span and one full-free-coordinate perturbation.
Every perturbation has norm $0.1\|b\|_2$.
Completed configurations have one first solve and five warm repetitions;
a $480$-column limit observation is retained once.
A separate direct process reuses its complete factor.

The current-RHS strength selector is the family in \cref{app:rhs-strength},
which minimizes a first residual on a chosen space.
Cached supported residual images reduce the six related selection queries
from $24.1$--$25.0$~ms to $11.4$--$11.9$~ms, at additional preparation
$6.8$~ms and $2.33$~MB.
The following complete query-and-solve comparison includes selection.

\begin{table}[H]
\centering\small
\setlength{\tabcolsep}{5pt}
\caption{Warm GPU work on the difficult fixed equation at relative
residual tolerance $10^{-3}$. Times are medians of five subsequent solves, including
any current-RHS selection. Held-out entries are Krylov columns;
``limit'' means the $480$-column observation is incomplete. Direct
returns its more accurate numerical factor solution.}
\label{tab:prepared-load-coverage}
\begin{tabular}{@{}lrrrr@{}}
\toprule
Method & Original columns & Original time (s) & Supported held-out & Free held-out\\
\midrule
Ordinary explicit raw 128 & 27 & 0.600 & 119 & 111\\
Selected strength, implicit & 4 & 0.213 & limit & limit\\
Full supported, implicit & 1 & 0.044 & 1 & 1\\
Reused complete direct & --- & 0.0215 & --- & ---\\
\bottomrule
\end{tabular}
\end{table}

The five related perturbations preserve the original load's qualitative
ordering. At forcing $10^{-5}$, original ordinary/selected work is $77/93$
columns; selected held-outs remain incomplete while ordinary completes in
$209/202$ columns.
At $10^{-3}$, the original selected/ordinary/full displacement differences
from direct are $(0.998,0.178,9.17\times10^{-5})$, respectively; at $10^{-5}$ the
selected/ordinary values fall to $(0.0142,0.00948)$.
Equal residual tolerances therefore do not imply equal field accuracy, and a
first-direction objective supplies no held-out or multi-column guarantee.

Core preparation costs $51.65$~s plus $47.72$~s for one-time preparation of complete-matrix products. Selected acquisition and full compliance cost $1.75$ and
$27.15$~s; direct preparation costs $52.34$~s.
The candidates have greater attributed setup and warm costs than direct
reuse on this fixed equation. The primary equation has the same ordering:
ordinary rank eight is already effective.
Changing-equation maintenance in \cref{sec:maintained-results} addresses
the different situation in which the old direct factor no longer solves
the current matrix exactly.

\FloatBarrier
\subsection{A supported first direction and its nonlinear consequence}
\label{sec:short-direction-consequence}

The strength parameter $\theta\ge0$ in \cref{app:retained-gram-strength}
weakens retained rows through a positive-part spectral map.
Crossing raw and metric rank-$128$ pools with high strength
$2900.9925863$ gives returned residuals $46.991/47.153$; low strength
$141.5124915$ gives $199.093/154.694$.
The high value uses the full metric spectrum and defines the oracle
comparator. \Cref{prop:rhs-strength-selection} instead selects strength from
the current RHS and supported actions before seeing that oracle.

Each method receives one column of the same host FGMRES recurrence on
the original late equation. Its saved direction enters one Newton trial
from identical displacement, contact history, lagged friction and controls.
Collision detection, energy Armijo and post-trial evaluation are unchanged.
Let $F^+$ be the returned residual, $\alpha$ the accepted fraction and
$\Delta\Phi$ the frozen-merit decrease; the initial residual is $168.6720$.

\begin{table}[H]
\centering
\small
\caption{One-column directions and their matched nonlinear consequence.
The current-RHS parameter is selected without the full spectrum. All five trials satisfy the geometric checks and use the stated one-column methods. The ordinary raw trial does not satisfy friction consistency;
the other four do.}
\label{tab:rhs-strength-consequence}
\begin{tabular}{@{}lrrrr@{}}
\toprule
Method & Linear relative residual & $\alpha$ & $\norm{F^+}_2$ & $\Delta\Phi$\\
\midrule
Current-RHS raw & $0.002815$ & $1$ & $48.407$ & $0.180558$\\
Ordinary raw & $0.925255$ & $0.007166$ & $173.520$ & $0.005302$\\
Core only & $0.325763$ & $1$ & $70.814$ & $0.119164$\\
Oracle-strength raw & $0.003231$ & $1$ & $46.991$ & $0.179623$\\
Full contact & $0.000134$ & $1$ & $137.576$ & $0.093760$\\
\bottomrule
\end{tabular}
\end{table}

The selected value $1015.5263$ leaves $125$ positive modes.
It gives the largest merit decrease; the oracle gives the smaller returned
residual and full contact the smallest linear residual.
A core/ordinary-endpoint residual-minimizing combination gives $0.322594$
on the same reference, against $0.002815$ for the positive spectral family.
These are different direction classes whose nonlinear ordering depends on the problem.

The selected construction needs $128$ restricted responses and two core
solves, avoiding both the full compliance and the full response cache.
Raw/restricted setup costs $2.740$~s after the root is available; selection
costs $0.243$~s and $131$ sparse contact actions.
Its $2.327$~MB response data compare with the earlier $886.346$~MB full
response cache. Scalar core solves still transfer full vectors through CPU memory.
The full reference inherits $32.510$~s and $2272$ compliance RHSs, plus
$0.643$~s fresh loading/reduced setup.
These setup timings come from different runs and do not establish a
matched comparison of total time from initialization.
The complete histories below measure its effect on total work.

\FloatBarrier
\subsection{Complete trajectories and their cost}
\label{sec:trajectory-summary}

The smaller $150$-Pa finger/cup history is frictionless, with one initial
material-core factor and forcing $0.1$.
Larger histories refresh the complete noncontact core $\overline M$ at every
equation, use restart $80$, a $480$-column linear limit and a $128$-iteration
nonlinear limit. Initial states, controls and nonlinear stopping criteria are shared.

\begin{table}[H]
\centering\small
\caption{Complete nonlinear comparisons. Solve columns include every
preliminary selected direction and subsequent Krylov column, with setup
responses counted separately. Times include preparation and first use.
The larger core-only $10^{-5}$ entry includes its terminal linear-work
observation and yields no time to solution.}
\label{tab:trajectory-summary}
\begin{tabular}{@{}llrrrrl@{}}
\toprule
Problem & Method & $\eta$ & Newton & Columns & Time (s) & Outcome\\
\midrule
Smaller & Direct & --- & 11 & --- & 252.919 & Converged\\
& Core only & $0.1$ & 12 & 22 & 243.667 & Converged\\
& Core + raw 8 & $0.1$ & 11 & 13 & 241.301 & Converged\\
\midrule
Larger & Direct & --- & 78 & --- & 901.397 & Converged\\
& Current core & $10^{-5}$ & 9 & 1057 & 714.833 & Iteration limit\\
& Current core & $0.1$ & 120 & 5231 & 1331.118 & Converged\\
& Ordinary raw 128 & $10^{-5}$ & 112 & 1831 & 1318.959 & Converged\\
& Ordinary raw 128 & $0.1$ & 128 & 1614 & 1778.749 & Iteration limit\\
& Selected strength, 128 & $0.1$ & 114 & 122 & 1886.882 & Converged\\
& Full supported contact & $0.1$ & 67 & 67 & 4032.666 & Converged\\
\bottomrule
\end{tabular}
\end{table}

On the smaller trajectory, the two iterative methods share the same
mechanical-reference construction and complete-residual checks.
Rank-eight correction reduces Krylov work; these single-run timings
do not establish an additional runtime benefit over the matched
core-only method.

Selected strength returns $110$ of $114$ preliminary directions directly;
the other four equations add two GPU columns each.
Its $14431$ setup responses cost $365.969$~s, and complete first-direction
work costs $548.807$~s. GPU continuation caches are included.
Core renewal costs $168.920$~s after $104.394$~s initial construction.

Full contact acquires $122940$ responses for $2939.896$~s.
Its $67$ directions use $134$ core applications and already meet the residual
criterion; the largest reduced arrays occupy $84.870$~MB.
Core-only uses $5231$ GPU applications for $108.485$~s and renewals for
$178.593$~s. These acquisition costs explain why fewer Krylov columns need not
reduce the total time.

Converged states are valid with consistent friction models.
Core-only, selected, full and tighter ordinary final absolute residuals are
$4.176\times10^{-7}$, $1.477\times10^{-7}$, $3.456\times10^{-7}$ and
$1.410\times10^{-7}$.
Selected strength has lower total time than the explicit full-contact construction;
tighter ordinary raw is the cheaper converged corrected alternative.

\subsection{Endpoint fields and friction history}
\label{sec:history-consequence}

\begin{table}[H]
\centering\small
\caption{Relative endpoint differences from the direct reference on the
larger problem. Force entries compare the concatenated normal and friction reaction
vectors over all ordered body pairs, using the Euclidean norm.
They compare coupled histories and have no automatic interpretation as
errors against a unique equilibrium.}
\label{tab:trajectory-fields}
\begin{tabular}{@{}lrrr@{}}
\toprule
Method & Displacement & Normal forces & Friction forces\\
\midrule
Core only, $\eta=0.1$ & $2.5112\times10^{-4}$ & $5.5948\times10^{-3}$ & $3.2616\times10^{-1}$\\
Selected strength, $\eta=0.1$ & $4.9908\times10^{-5}$ & $1.2671\times10^{-3}$ & $7.2205\times10^{-2}$\\
Ordinary raw 128, $\eta=10^{-5}$ & $1.8070\times10^{-5}$ & $2.2356\times10^{-5}$ & $8.3550\times10^{-5}$\\
Full supported contact & $1.3351\times10^{-6}$ & $1.2629\times10^{-5}$ & $2.6773\times10^{-5}$\\
\bottomrule
\end{tabular}
\end{table}

The norm of the direct friction-force array is $6.8286\times10^{-4}$ in the stored force units.
Core-only and selected strength differ by $2.2272\times10^{-4}$ and
$4.9306\times10^{-5}$ in absolute vector norm, with maximum coordinate
differences $0.008029$ and $0.001400$~mm.
Initial histories, final contact-pair identities and interpolation maps agree;
some per-pair history references differ.

This distinction enters the force law explicitly. Let
$T\in\mathbb R^{3\times2}$ be an orthonormal tangent basis,
$w_i$ the four point--triangle lifting weights, $z_i$ the current contact
vertices, and $z_i^-$ their history references. At an interior
closest-point configuration, $T^T\sum_iw_i z_i=0$. The tangential
increment is therefore
\[
 d_t=T^T\sum_iw_i(z_i-z_i^-)=-T^T\sum_iw_i z_i^-.
\]
Let $f_n>0$ be the normal-force magnitude. In the sliding regime
$\norm{d_t}_2\ge\varepsilon_f$, the point contribution to the friction
gradient is
\[
 \boldsymbol g_t=\mu_f f_n T\,\frac{d_t}{\norm{d_t}_2}.
\]
All observed final pairs are in this regime, with
$\norm{d_t}_2/\varepsilon_f$ between $6.87$ and $49.37$.
After applying the body orientation and reaction sign, this fixed-state
expression reproduces the recorded force vectors to relative discrepancies at most $1.54\times10^{-14}$.

Exchanging only history references in that frozen force map gives a
two-order symmetric decomposition. The history contribution has norm
$3.5608\times10^{-5}$, compared with total selected/direct difference
$3.4865\times10^{-5}$ for four independent cup-force vectors.
The remaining coefficient contribution is $9.6682\times10^{-7}$;
these vectors partly cancel. Tighter ordinary and full-contact histories
retain reference coordinates within about $10^{-11}$ of direct.
This static decomposition supplies no unique percentage attribution to
the nonlinear path.

Per-pair references are established at contact entry along accepted
segments. They remain inputs to the friction evaluator, so a strict final
residual does not identify one common history. The endpoint differences
qualify the physical interpretation of the inexact methods.

\FloatBarrier
\subsection{Response renewal, reset and precision diagnostics}
\label{app:reset-diagnostics}

For already assembled CPU matrices, constructing the material/complete
factors costs $19.469/19.082$~s, retained acquisition $0.307/0.067$~s, and full
supported acquisition $1.503$~s.
These are eight-column setup observations, distinct from the scalar
complete-workload preparation.
At the two changed cores, one/two-solve residual corrections and full
eight-column renewal reduce old-response defects $0.01937/0.04198$;
renewal reaches about $2\times10^{-11}$.
All four spaces still take five/six columns with their actual current images.
The response-space comparison has a precise quadratic-form definition.
Let $V$ contain the old responses and $V_{\rm new}$ the renewed ones.
For the current material--volumetric core $M_c$, set
$\widehat M_c=(M_c+M_c^T)/2$ and
\[
 G=V^T\widehat M_cV,\qquad
 T=G^{-1}V^T\widehat M_cV_{\rm new},\qquad \Delta V=V_{\rm new}-VT.
\]
The recorded statistic
\[
 \varepsilon_{\rm span}
 =\sqrt{\frac{\operatorname{tr}(\Delta V^T\widehat M_c\Delta V)}
                   {\operatorname{tr}(V_{\rm new}^T\widehat M_cV_{\rm new})}}
\]
is $0.006983$ and $0.015730$ for the two cores; $\operatorname{tr}$ denotes
the matrix trace.
It measures the renewed responses left outside the old span by this
projection. The restricted $G$ is numerically positive definite, and the
evaluated quotient has a positive denominator and nonnegative numerator.
Only these restricted quadratic forms are checked.
The nonzero values distinguish a changed response space from an invertible
change of basis.
Computing the residual and its singular vectors costs $0.277/0.198$~s before correction,
exceeding the observed acquisition saving.

The complete short-study model/solver preparation costs $71.4$--$72.6$~s.
Direct retains one symbolic analysis and performs $59$ numerical factors;
each iterative method has one factor.
Compressed acquisition uses eight responses and one contact root, with
$58$ later reuses of Krylov working memory. Full caching acquires $60$ responses
after the first nonzero contact difference.
Stable-support compressed updates upload $4416$ bytes per equation.

\paragraph{Short-study limitations.}
The reverse material-reference run takes an extra iteration at $140$~Pa.
Its eleventh Armijo margin changes from $+4.194\times10^{-14}$ to
$-5.634\times10^{-14}$ at energy about $0.02527$, leading to a half-sized
trial. Both runs meet the nonlinear stopping criteria, with relative displacement/normal-reaction
differences $1.432\times10^{-6}/5.061\times10^{-4}$ and maximum coordinate
difference $6.736\times10^{-7}$~mm.
Later displacement differences fall to $1.052\times10^{-10}$ and
$3.462\times10^{-13}$.
Contact-identity checks agree; the separate effects of the candidate and
arithmetic differences remain unresolved.

All methods stop at the $130$-to-$160$~Pa reversal while making a new contact
reference. The clearance-onset query exhausts $4096$ refinements about
$1.22\times10^{-9}$~mm outside the $0.01$~mm activation surface, with distance
tolerance $10^{-12}$~mm. This is a geometric-query limitation; the returned
state has not converged.

\paragraph{Extended-study limitations.}
The largest forward compressed/direct relative displacement difference is
$1.770\times10^{-6}$, with maximum coordinate difference
$6.561\times10^{-7}$~mm and relative normal-reaction difference
$2.322\times10^{-5}$.
Direct repeats exhibit essentially the same variation.
All seven compared endpoints use six Newton iterations, with final fractions
$0.225$, $0.45$ or $0.9$ and absolute residuals about
$9.680\times10^{-7}$, $6.870\times10^{-7}$ or $1.249\times10^{-7}$.
Interpolation between direct endpoints by the recorded fractions predicts
the others within $1.790\times10^{-12}$~mm per coordinate.
The short-study accuracy bound consequently does not extend to this family.

The exceptional run follows a valid linear direction.
A separate passive diagnostic converges for two targets; its next candidate lowers
the residual from $1.220\times10^{-6}$ to $1.220\times10^{-7}$.
A $60$-digit evaluation of that captured state's $66{,}315$ cells gives cup
material-energy change $-5.812\times10^{-14}$, against recorded
$+1.647\times10^{-13}$; gradient formation contributes negligibly.
This identifies constitutive cancellation, whose stable evaluation has
established treatments \citep{shakeri2024stable}.
The original failed-trial vectors were not saved: this is a consequential
separate observation, not an exact reconstruction of that failure.
The production formula and all reported timings remain unchanged.

Other computations were observed on the shared machine during these
comparisons. The reported ranges and ratios remain descriptive observations
under that shared-machine uncertainty.

\section{An established spectral-strength control}
\label{sec:spectral-strength}

Selecting a mode and choosing its correction strength are different
decisions. Following the exact spectral preconditioner of
\citet[Eq.~(5.1)]{frangella2023nystrom}, we use the first omitted mode as a
reference strength. The construction below is that established formula
after scaling by the mechanical core.

Assume $M\succ0$. Let $v_1,\ldots,v_n$ be orthonormal eigenvectors of
$M^{-1/2}U^TUM^{-1/2}$, with eigenvalues
$\lambda_1\ge\cdots\ge\lambda_n\ge0$. Choose $1\le r<n$ with
$\lambda_r>0$, and set $\theta=\lambda_{r+1}$. The metric root has rows
$\sqrt{\lambda_i}v_i^TM^{1/2}$ for $i\le r$. Define the row scales and
the scaled correction by
\begin{equation}
 d_i=\sqrt{\frac{\lambda_i-\theta}{\lambda_i(1+\theta)}},\qquad
 D_\theta=\diag(d_1,\ldots,d_r),\qquad
 R_\theta=D_\theta R_r,\qquad P_\theta=M+R_\theta^TR_\theta.
\label{eq:tail-normalized-contact}
\end{equation}
Since $0\le d_i\le1$, the correction obeys
$0\preceq R_\theta^TR_\theta\preceq U^TU$ and $P_\theta\succ0$.
If a retained eigenvalue equals $\theta$, its scaled row vanishes.

Congruence by $M^{-1/2}$ diagonalizes the reference pair in the basis
$v_i$. For $i\le r$, the preconditioner eigenvalue is
$1+(\lambda_i-\theta)/(1+\theta)=(1+\lambda_i)/(1+\theta)$; for
$i>r$ it is one. The generalized eigenvalues of $(H,P_\theta)$ are
therefore $1+\theta$ on the retained space and $1+\lambda_i$ on its
complement. In particular,
\begin{equation}
 \kappa_2(P_\theta^{-1/2}HP_\theta^{-1/2})
 =\frac{1+\lambda_{r+1}}{1+\lambda_n}.
\label{eq:tail-normalized-spectrum}
\end{equation}
When the contact model has a null space, this condition number equals the
ordinary retained-mode endpoint, although its eigenvalue distribution differs.

In Remark~\ref{rem:partial-cluster}, the scaled preconditioner is
$\diag(t/s,1,1)$ and maps the RHS under $HP_\theta^{-1}$ to $s b$.
One Krylov column consequently gives the exact solution, while the ordinary
correction has the nonzero first residual derived there. The weighted
factor-space map in \eqref{eq:tail-normalized-contact} is a positive
semidefinite contraction; the earlier omitted-energy optimality statement
compares orthogonal projectors. These are different admissible classes.
Neither reference spectrum predicts the exact FGMRES history on the full
$A=H+N$.

The scaled action reuses the ordinary responses $Z_r$. Define
$S_\theta=I_r+D_\theta(R_rZ_r)D_\theta$. For an input vector
$v\in\R^n$, put $q=M^{-1}v$. Substitution in the Woodbury identity gives
\[
 P_\theta^{-1}v
 =q-Z_rD_\theta S_\theta^{-1}D_\theta R_rq.
\]
Thus changing these row strengths requires no additional core-response
columns once the metric space and its eigenvalues are available. The cost
of obtaining that space and its first omitted eigenvalue remains explicit.

\subsection{Normalization using only retained responses}
\label{app:retained-gram-strength}

A cheaper construction applies the same spectral recipe to the retained
surrogate. Assume $M\succ0$ and let $R\in\R^{r\times n}$ have independent
rows. Define $Z=M^{-1}R^T$ and the retained response Gram matrix
$G_R=RZ$. Write $G_R=W\operatorname{diag}(\mu_i)W^T$, where $W$ is
orthogonal and $\mu_1\ge\cdots\ge\mu_r>0$. Set $\theta_R=\mu_r$ and
\[
 D_R=\operatorname{diag}\left(
 \sqrt{\frac{\mu_i-\theta_R}{\mu_i(1+\theta_R)}}\right),
 \qquad L=D_RW^T,\qquad P_R=M+R^TL^TLR.
\]
This is the normalization in \citet[Eq.~(1.3)]{frangella2023nystrom}
applied to the retained surrogate; a raw contact projection has its own
approximation properties. The mode associated with $\mu_r$ receives zero
correction weight. Woodbury gives
\[
 P_R^{-1}=M^{-1}-ZL^T(I_r+LG_RL^T)^{-1}LRM^{-1}.
\]
The $r$ existing responses determine this action without a full contact
compliance. The numerical experiment retains the measured small product
in the reduced solve, with its asymmetry reported separately.

For the crossed diagnostic we also admit a prescribed $\theta\ge0$ and
replace the squared row weight by
$(\mu_i-\theta)_+/[\mu_i(1+\theta)]$, where $(s)_+=\max\{s,0\}$.
This defines the positive-part control when the prescribed value exceeds
some retained eigenvalues. Let $D_\theta$ have the square roots of these
weights on its diagonal and set $L_\theta=D_\theta W^T$. The weights remain
between zero and one. For every nonzero vector $x$,
$x^TP_\theta x=x^TMx+\norm{L_\theta Rx}_2^2>0$, where
$P_\theta=M+R^TL_\theta^TL_\theta R$. The same Woodbury identity applies. Acquisition of a
prescribed diagnostic strength is a separate cost.

The retained data provide no uniform bound on an unseen contact spectrum.
For a five-dimensional example, let $\tau\ge1$ and define
\[
 C=\operatorname{diag}(5,4,3,2,1),\quad U=C^{1/2},\quad
 M_\tau=\operatorname{diag}(1/20,4,3/\tau,2/\tau,1/\tau).
\]
Let $e_i$ be the coordinate vectors and set $Q=(e_1,e_2)$ and $R=Q^TU$.
The raw selector takes this space because it contains the two largest
contact eigenvalues. Direct multiplication yields
\[
 G_\tau=UM_\tau^{-1}U^T=\operatorname{diag}(100,1,\tau,\tau,\tau),
 \qquad G_R=\operatorname{diag}(100,1).
\]
Both $G_R$ and the complete retained responses $M_\tau^{-1}R^T$ are
independent of $\tau$. The normalization therefore uses $\theta_R=1$,
while the third eigenvalue of $G_\tau$ is $\tau$. The first diagonal of
$P_R$ is $101/40$; the corresponding diagonal of $M_\tau+C$ is $101/20$.
Dividing all five diagonal entries gives generalized eigenvalues
$2,2,1+\tau,1+\tau,1+\tau$ and condition number $(1+\tau)/2$.
Moreover, $G_\tau Q=QG_R$, so the Ritz residual outside $Q$ is zero and
repeated $G_\tau$ actions preserve that same space. Here $M_\tau$ and $C$ commute.

This example limits conclusions drawn from the retained Gram or its Ritz
residual alone. Information reaching the complement, or an independently justified direction-specific criterion using the complete operator, is
needed for stronger conclusions. It gives no unique causal explanation
of a nonlinear contact trial.

\subsection{Determining strength from the current right-hand side}
\label{app:rhs-strength}

Residual-minimizing preconditioner combinations are established in
multi-preconditioned GMRES \citep{greif2011multipreconditioning}.
\citet{ayuso2014combined} derive a two-parameter minimization and share an
existing preconditioner application. Variable-weight Schwarz also exploits
shared subdomain solves and sparse actions \citep{greif2013weights}.
Here those principles are specialized to the positive spectral family above.
The contact support permits selection without acquiring the complete contact
compliance or the omitted spectrum.

Let $E\in\R^{p\times n}$ select $p$ distinct contact coordinates, so that
$EE^T=I_p$. Assume $M\succ0$ and write $C=E^TC_sE\succeq0$ and $H=M+C$.
Let $R=R_sE$ have $r\ge1$ rows, with arbitrary rank, where $R_s\in\R^{r\times p}$.
Define $Z=M^{-1}R^T$, $Z_s=EZ$ and
$G_R=R_sZ_s=W\diag(\mu_1,\ldots,\mu_r)W^T$, with $W$ orthogonal and
$\mu_i\ge0$. For $\theta\ge0$, define
\begin{equation}
 d_i^2(\theta)=\begin{cases}
 \dfrac{(\mu_i-\theta)_+}{\mu_i(1+\theta)},&\mu_i>0,\\
 0,&\mu_i=0,
 \end{cases}
 \qquad P_\theta=M+R^TW\diag(d_i^2(\theta))W^TR.
 \label{eq:rhs-positive-family}
\end{equation}
Each member is positive definite. The endpoints $\theta=0$ and
$\theta\ge\max_i\mu_i$ give $M+R^TR$ and $M$, respectively.

Fix a nonzero right-hand side $b\in\R^n$. Put $y=M^{-1}b$, $y_s=Ey$
and $b_s=Eb$. Let $b_o$ contain the coordinates of $b$ outside the support.
Define
\begin{align}
 \gamma_i(\theta)&=\begin{cases}
 \dfrac{(\mu_i-\theta)_+}{\mu_i(1+\mu_i)},&\mu_i>0,\\
 0,&\mu_i=0,
 \end{cases}\qquad
 c_\theta=W\diag(\gamma_i(\theta))W^TR_sy_s,\label{eq:rhs-coefficients}\\
 p_\theta&=y-Zc_\theta,\qquad
 p_s(\theta)=y_s-Z_sc_\theta.\label{eq:rhs-supported-direction}
\end{align}
The encoded right-hand side and operator image are the vectors in
$\R^{p+1}$ given by
\begin{equation}
 \bar b=\begin{pmatrix}\norm{b_o}_2\\b_s\end{pmatrix},\qquad
 \bar a(\theta)=\begin{pmatrix}\norm{b_o}_2\\
 b_s+C_sp_s(\theta)-R_s^Tc_\theta\end{pmatrix}.
 \label{eq:rhs-support-encoding}
\end{equation}
Write the complete contact root as $U=U_sE$, defining $U_s$ on the support.
For a vector $v\in\R^p$, its sparse action is $C_sv=U_s^T(U_sv)$.

\begin{proposition}[Supported first-direction selection]
\label{prop:rhs-strength-selection}
Under the preceding assumptions, $p_\theta=P_\theta^{-1}b$ and, for every
real $\alpha$,
\begin{equation}
 \norm{b-\alpha Hp_\theta}_2
 =\norm{\bar b-\alpha\bar a(\theta)}_2.
 \label{eq:rhs-encoded-residual}
\end{equation}
Let $0=\tau_0<\tau_1<\cdots<\tau_k$ be the sorted distinct values in
$\{0,\mu_1,\ldots,\mu_r\}$. For $k>0$, the global minimum over
$\theta\ge0$ and $\alpha\ge0$ is obtained by comparing the $k$
two-variable nonnegative least-squares problems
\begin{equation}
 \min_{t_0,t_1\ge0}
 \norm{\bar b-t_0\bar a(\tau_j)-t_1\bar a(\tau_{j+1})}_2,
 \qquad 0\le j<k.
 \label{eq:rhs-endpoint-cone}
\end{equation}
For a minimizing pair with $t_0+t_1>0$, the corresponding parameters are
$\alpha=t_0+t_1$ and
$\theta=(t_0\tau_j+t_1\tau_{j+1})/(t_0+t_1)$.
For $k=0$, use the single core-only ray $\{\alpha M^{-1}b:\alpha\ge0\}$.
\end{proposition}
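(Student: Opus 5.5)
The plan has three steps: identify $p_\theta$ as a Woodbury action, reduce the residual norm to the encoded vectors, and then use piecewise linearity in $\theta$ to turn the two-parameter search into finitely many cone problems.

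\textbf{Step 1: $p_\theta=P_\theta^{-1}b$.} Write $K_\theta=W\diag(d_i^2(\theta))W^T\succeq0$, so that $P_\theta=M+R^TK_\theta R$. Since $K_\theta$ may be singular, I will use the Woodbury form that does not invert it:
\[
P_\theta^{-1}b=y-ZK_\theta(I_r+G_RK_\theta)^{-1}Ry .
\]
This can be checked by multiplying by $P_\theta$ and using $RZ=G_R$. In the $W$ basis the product $K_\theta(I_r+G_RK_\theta)^{-1}$ is diagonal, with entries $d_i^2/(1+\mu_id_i^2)$.
\begin{itemize}
\item For $\mu_i>\theta$ we have $1+\mu_id_i^2=(1+\mu_i)/(1+\theta)$, so the entry is $(\mu_i-\theta)/(\mu_i(1+\mu_i))=\gamma_i(\theta)$.
\item For $0<\mu_i\le\theta$, and for $\mu_i=0$, both $d_i^2$ and $\gamma_i$ vanish, so the entry again equals $\gamma_i(\theta)$.
\end{itemize}
Finally $Ry=R_sEy=R_sy_s$. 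Hence the correction coefficient is exactly $c_\theta$ in \eqref{eq:rhs-coefficients}, and $p_\theta=y-Zc_\theta$.

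\textbf{Step 2: the encoded residual identity \eqref{eq:rhs-encoded-residual}.} From Step 1, $Mp_\theta=b-R^Tc_\theta=b-E^TR_s^Tc_\theta$. Also $Cp_\theta=E^TC_sEp_\theta=E^TC_sp_s(\theta)$. Therefore
\[
Hp_\theta=b+E^T\bigl(C_sp_s(\theta)-R_s^Tc_\theta\bigr).
\]
Now split $b-\alpha Hp_\theta$ into its off-support and support parts.
\begin{itemize}
\item Off the support, the vector is $(1-\alpha)b_o$. Its norm is $|\norm{b_o}_2-\alpha\norm{b_o}_2|$, which is the first entry of $\bar b-\alpha\bar a(\theta)$ in absolute value.
\item On the support, the vector is $b_s-\alpha\bigl(b_s+C_sp_s-R_s^Tc_\theta\bigr)$, which is the remaining block of $\bar b-\alpha\bar a(\theta)$.
\end{itemize}
Orthogonality of the two coordinate sets gives \eqref{eq:rhs-encoded-residual}. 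This also shows that the computation needs only $Z_s$, the sparse action of $C_s$, and $\norm{b_o}_2$.

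\textbf{Step 3: reduction to endpoint cones.} This is the main step, and it hinges on linearity. Each $\gamma_i(\theta)$ is continuous and piecewise linear in $\theta$, with its only kink at $\mu_i$; it is identically zero when $\mu_i=0$. Consequently $c_\theta$, $p_s(\theta)$ and $\bar a(\theta)$ are affine on every interval $[\tau_j,\tau_{j+1}]$. For $\theta\ge\tau_k=\max_i\mu_i$ all $\gamma_i$ vanish, so $\bar a(\theta)=\bar a(\tau_k)$, the encoded core-only image.

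The key claim is that, for $\alpha\ge0$ and $\theta\in[\tau_j,\tau_{j+1}]$,
\[
\bigl\{\alpha\bar a(\theta)\bigr\}=\bigl\{t_0\bar a(\tau_j)+t_1\bar a(\tau_{j+1}):t_0,t_1\ge0\bigr\}.
\]
\begin{itemize}
\item Given $(\alpha,\theta)$, write $\theta=(1-s)\tau_j+s\tau_{j+1}$ and set $t_0=\alpha(1-s)$, $t_1=\alpha s$. Affinity of $\bar a$ on the interval gives $\alpha\bar a(\theta)=t_0\bar a(\tau_j)+t_1\bar a(\tau_{j+1})$.
\item Conversely, given $t_0+t_1>0$, the stated $\alpha=t_0+t_1$ and $\theta=(t_0\tau_j+t_1\tau_{j+1})/(t_0+t_1)$ recover the same point. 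The case $t_0=t_1=0$ is $\alpha=0$, residual $\norm{\bar b}_2$, which every member of the family attains.
\end{itemize}
The rays with $\theta>\tau_k$ lie in the last cone, since $\bar a(\theta)=\bar a(\tau_k)$ there. So the union of the $k$ cones is exactly the set of attainable vectors $\alpha\bar a(\theta)$. The global minimum over $\theta\ge0,\alpha\ge0$ is therefore the best of the $k$ two-variable NNLS problems \eqref{eq:rhs-endpoint-cone}, each of which attains its minimum because a cone is closed.

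When $k=0$, all $\mu_i$ are zero. Then $P_\theta=M$ for every $\theta$, and the family collapses to the core-only ray $\{\alpha M^{-1}b:\alpha\ge0\}$.

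The point needing most care is the affinity of $\bar a$ on each interval. It must hold even though several $\mu_i$ may coincide, which is why the breakpoints are taken as distinct values, and even though some rows are rank-deficient with $\mu_i=0$. That zero case is why $\tau_0=0$ is included among the breakpoints.
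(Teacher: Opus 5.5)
Your proposal is correct and follows essentially the same route as the paper. A Woodbury inverse diagonalized in the $W$ basis gives $p_\theta=y-Zc_\theta$, the support/off-support split yields the encoded residual, and the affinity of $\bar a$ on each $[\tau_j,\tau_{j+1}]$ turns $(\alpha,\theta)$ into nonnegative endpoint-cone coefficients. The one difference is minor: your form $ZK_\theta(I_r+G_RK_\theta)^{-1}R$ handles zero and inactive weights uniformly, whereas the paper first shows $R^Tw_i=0$ when $\mu_i=0$ and discards those modes; note also that $\tau_0=0$ is needed simply because the domain is $\theta\ge0$, not because of zero modes.
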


\begin{proof}
Let $w_i$ be column $i$ of $W$. If $\mu_i=0$, then
$\norm{M^{-1/2}R^Tw_i}_2^2=w_i^TG_Rw_i=0$, so $R^Tw_i=0$.
The zero-mode weights therefore change neither the correction nor its
inverse. In particular the zero-strength endpoint is $M+R^TR$ even when
$R$ has dependent rows. If every eigenvalue is zero, $R=0$ and the family
is core-only. For the positive modes, Woodbury applied to \eqref{eq:rhs-positive-family} diagonalizes its small
inverse in the basis $W$. For an active eigenvalue $\mu_i>\theta$, the
inverse-update coefficient is
\[
 \frac{(\mu_i-\theta)/[\mu_i(1+\theta)]}
 {1+(\mu_i-\theta)/(1+\theta)}
 =\frac{\mu_i-\theta}{\mu_i(1+\mu_i)}.
\]
An inactive eigenvalue contributes zero. This proves
$P_\theta^{-1}b=y-Zc_\theta$. Since $My=b$ and $MZ=R^T$,
\[
 Hp_\theta=b+E^T\bigl(C_sp_s(\theta)-R_s^Tc_\theta\bigr).
\]
Outside the support the residual is $(1-\alpha)b_o$. Squaring its norm
and adding the supported residual proves \eqref{eq:rhs-encoded-residual}.

Within $[\tau_j,\tau_{j+1}]$, every coefficient $\gamma_i$ is affine in
$\theta$. Thus $p_s$ and $\bar a$ interpolate their endpoint values.
For $\theta=(1-s)\tau_j+s\tau_{j+1}$, with $0\le s\le1$, set
$t_0=\alpha(1-s)$ and $t_1=\alpha s$. These coefficients are nonnegative
and give \eqref{eq:rhs-endpoint-cone}. Conversely, any feasible pair with
positive sum gives the displayed $\alpha$ and $\theta$; the zero pair
represents $\alpha=0$. For $\theta\ge\tau_k$ the direction is constant
and is already represented by the last endpoint ray. Comparing all interval
minima therefore solves the declared global problem.
\end{proof}

Each two-variable problem can be solved by comparing its feasible
unconstrained least-squares solution, its two endpoint rays and zero.
Rank-deficient endpoint images are permitted. The feasible set is a union
of adjacent nonnegative endpoint cones. It lies inside the unrestricted
span of all endpoint images used by a first-step multi-preconditioned
projection. Consequently this construction does not claim a better residual
against that larger projection space. Its restriction preserves the
specified positive spectral family. When the minimizing $\alpha$ is positive,
$b^T(\alpha p_\theta)>0$ follows from $P_\theta\succ0$. If $-b$ is the
potential gradient, this is initial potential descent. Unit-step acceptance,
globalization cost and nonlinear convergence are distinct questions.

Only $Z_s$, $R_s$ and a contact action are required for selection after the
first core solve. The full selected direction has the implicit realization
\[
 p_\theta=M^{-1}\bigl(b-E^TR_s^Tc_\theta\bigr).
\]
Thus $r$ restricted setup responses and two core solves determine the first
direction without storing $Z\in\R^{n\times r}$. Selection involves
supported arithmetic for every spectral interval; this cost remains part of
the algorithm. Repeated eigenvalues reduce the number of intervals. The
chosen parameter may be held fixed for later Krylov columns, though the
first-direction proposition does not yield a multi-column convergence rate.

For computed responses, the predicted and actual actions require a separate
comparison. Let $p$ be a computed implicit direction, $c$ its retained
coefficient vector, and $p_s^{\rm pred}$ its computed supported prediction.
Define
\[
 a^{\rm pred}=b+E^T(C_sp_s^{\rm pred}-R_s^Tc),\qquad
 q=b-R^Tc-Mp.
\]
For the original matrix $A$, direct subtraction gives the exact identity
\begin{equation}
 Ap-a^{\rm pred}=(A-H)p-q+E^TC_s(Ep-p_s^{\rm pred}).
 \label{eq:rhs-differential-errors}
\end{equation}
Indeed, $Mp=b-R^Tc-q$ and $Hp=Mp+E^TC_sEp$.
Writing the right-hand side of \eqref{eq:rhs-differential-errors} as
$\delta$, the reverse triangle inequality gives
\[
 \left|\norm{b-\alpha Ap}_2-\norm{b-\alpha a^{\rm pred}}_2\right|
 \le |\alpha|\norm{\delta}_2.
\]
This accounts separately for the operator split, implicit core solve and
supported-response error. It also retains their vector cancellation.
The supported optimum alone certifies no property of a computed
original-$A$ action.

Let $w$ be a reported unit eigenvector associated with a nonpositive
eigenvalue estimate. The numerical implementation replaces that estimate
by the factored Rayleigh form $(R_s^Tw)^T(Z_sw)$, evaluated in extended
precision. This expression equals $w^T(R_sZ_s)w$ in exact arithmetic and
uses the stored factors directly. Exact zeros use the kernel formulas above;
a negative evaluated Rayleigh form remains a failure of the construction. The same
working spectrum is passed to any subsequent root normalization. No
numerical-rank cutoff or negative-mode clipping is used. The original-$A$
action check remains necessary for these approximate spectral data.

\section{Reproducibility information}

The device experiments use cuDSS 0.8.0, JAX 0.9.2, NumPy 2.4.4 and
SciPy 1.17.1, with float64 arithmetic on a Tesla V100S-PCIE with 32~GB
of memory. The physical inputs are specified in \cref{sec:protocol}.
Each process is restricted to one CPU core. Matched GPU warm comparisons
and CPU-controlled fixed-equation comparisons retain their distinct
timing scopes. Complete histories use the stated nonlinear criteria and
report incomplete iteration limit outcomes alongside converged results.

For the primary frozen case, the barrier scale follows a GIPC-style
numerical build-time rule \citep{huang2024gipc}, evaluated in the stated
millimetre coordinate convention. Let $\overline m_{\rm cal}>0$ be its
calibration scalar: the mean of supplied positive nodal masses when available,
and one otherwise. This scalar does not add physical mass to a static equation.
Let $L_{\mathrm{bbox}}$ be the mesh bounding-box diagonal, and define the
implemented logarithmic-barrier derivative expression, for $a,b>0$, by
\[
 H_b(a,b)=-2\log(a/b)-4(a-b)/a+(a-b)^2/a^2.
\]
Let $\widehat d$ be the contact-distance argument supplied to that routine and set
$d_{\mathrm{ref}}=10^{-16}L_{\mathrm{bbox}}^2$.
The barrier coefficient is
\[
 \kappa_{\mathrm{ipc}}
 =\left|\frac{10^{11}\overline m_{\rm cal}}
 {4d_{\mathrm{ref}}H_b(d_{\mathrm{ref}},\widehat d)}\right|.
\]
It is $466.00868$ for the primary extracted case. Frozen operators retain
their actual numerical contact tangents. This calibration is separate from
the nonlinear friction-history interpretation.

\section{Dimension and rank conventions}

The contact factor is row oriented: $U\in\mathbb R^{m\times n}$ and the
positive-semidefinite update is $U^TU$. The primary Tet10/P0--IPC core
is $K_0+B^TD^{-1}B$; the large complete trajectories instead use the
current complete noncontact core $\overline M$.
A support restriction has $p$ rows, and the retained factor-space
dimension is denoted by $r$. These dimensions play different roles.

For the primary extraction, $U\in\mathbb R^{94\times325260}$ touches
$60$ free coordinates and has structural rank $58$. Let $\sigma_i$ be
its singular values in descending order. The reported numerical-rank
diagnostic uses
\begin{equation}
 \tau_{\mathrm{rank}}=\max\{10^{-12},10^{-10}\sigma_1\},
 \qquad \sigma_i>\tau_{\mathrm{rank}}.
 \label{eq:factor-rank-threshold}
\end{equation}
Here $\sigma_1=52.63054$, $\sigma_{42}=2.1212\times10^{-5}$ and
$\sigma_{43}=4.3412\times10^{-11}$, giving numerical rank $42$.
This diagnostic leaves the experimental retained dimensions unchanged:
$r=94$ denotes the full stored-row endpoint. Contact-pair counts,
supported coordinates and numerical rank are reported separately.

\begingroup
\raggedright

\endgroup
\end{document}